\RequirePackage{fix-cm}
\documentclass[smallextended]{svjour3}       % onecolumn (second format)
\smartqed  % flush right qed marks, e.g. at end of proof
\usepackage{graphicx}
\smartqed
\usepackage{booktabs}
\usepackage{graphicx}
\usepackage{subfigure}
\usepackage{underscore}
\usepackage{hyperref}	
\usepackage{lineno}
\usepackage{lipsum}
\usepackage{comment}
\hypersetup{colorlinks,%				% Reference color setup	
	linkcolor=blue,%
	citecolor=blue}

\journalname{}
\usepackage{mathptmx}      % use Times fonts if available on your TeX system
\usepackage{graphicx,epstopdf}
\usepackage{bm}
\usepackage{latexsym,enumerate}
\usepackage{mathrsfs}
\usepackage{amsfonts}
\usepackage{amssymb,amsmath}
\usepackage{epsfig}       %for Postscript files
\usepackage{subfigure}    %for parallel figures
\usepackage{graphicx}
\usepackage{float}
\usepackage{multirow}
\usepackage{array}
\usepackage{color}
\usepackage{listings}
\usepackage{caption}
\DeclareMathOperator*{\argmin}{argmin}
\usepackage[ruled]{algorithm2e}

\numberwithin{lemma}{section}
\numberwithin{theorem}{section}
\numberwithin{corollary}{section}
\numberwithin{definition}{section}
\numberwithin{property}{section}
\numberwithin{remark}{section}
\numberwithin{example}{section}
\numberwithin{figure}{section}
\numberwithin{remark}{section}
\newtheorem{assumption}{Assumption}
\begin{document}
%\linenumbers
\title{An Anderson-accelerated stochastic extragradient method for stochastic variational inequalities

\thanks{This paper is supported in part by National Natural Science Foundation of China grants (12425115, 12271127, 62176073) and Hong Kong Research Grant Council grants (PolyU15300124, PolyU15300625).}}
%\subtitle{Do you have a subtitle?\\ If so, write it here}

\titlerunning{An Anderson-accelerated stochastic extragradient method for stochastic VIs}        % if too long for running head

\author{ Xin Qu        \and
        Wei Bian   \and
        Xiaojun Chen
}

%\authorrunning{Short form of author list} % if too long for running head

\institute{Xin Qu \at
              Department of Applied Mathematics, The Hong Kong Polytechnic University, Hung Hom, Kowloon, Hong Kong, China 
            \\  \email{hitxin.qu@connect.polyu.hk}           %  \\
%             \emph{Present address:} of F. Author  %  if needed
           \and
          Wei Bian (Corresponding author)\at
              School of Mathematics, Harbin Institute of Technology, Harbin, China
              \\  \email{bianweilvse520@163.com}  
               \and
             Xiaojun Chen \at
              Department of Applied Mathematics, The Hong Kong Polytechnic University, Hung Hom, Kowloon, Hong Kong, China
              \\  \email{xiaojun.chen@polyu.edu.hk}  
}

\date{Received: date / Accepted: date}
% The correct dates will be entered by the editor

\maketitle

\begin{abstract}
In this paper, we propose an Anderson-accelerated stochastic extragradient algorithm for solving a class of stochastic variational inequalities,
by incorporating Anderson acceleration into the stochastic extragradient method under a stochastic approximation framework.
A key challenge in our setting is that the pseudomonotonicity assumption is only imposed on the expectation of the stochastic operator, rather than on the 
individual stochastic operator itself and the sample averages utilized in the algorithm. We prove that, despite the lack of pseudomonotonicity in the sampled operators, the sequence generated by the proposed algorithm converges almost surely to a solution of the stochastic variational inequality problem. Additionally, we establish the sublinear convergence rate of the proposed algorithm in terms of the mean residual function, along with its optimal oracle complexity. Finally, we validate the effectiveness of the proposed algorithm through numerical experiments.
\keywords{Pseudomonotone variational inequality \and Stochastic approximation \and Anderson acceleration \and Extragradient algorithm}
% \PACS{PACS code1 \and PACS code2 \and more}
\subclass{65K15 \and 90C15 \and 62L20 \and 90C33}
\end{abstract}

\section{Introduction}
Let $X\subseteq\mathbb{R}^n$ be a closed convex set and $(\Xi,\mathcal{G})$ be a measurable space. Suppose $T:\Xi\times\mathbb{R}^n\rightarrow\mathbb{R}^n$ is a measurable operator, and $\xi:\Omega\rightarrow\Xi$ is a random variable defined on a probability space $(\Omega, \mathcal{F}, \mathbb{P}).$ 
In this paper, we focus on the following stochastic variational inequality (SVI): find an $x^*\in X$ such that
\begin{equation}\label{ssvi}
	\langle \mathbb{E}[T(\xi,x^*)],x-x^*\rangle\geq0,\quad\forall x\in X,
\end{equation}
which has many applications in practice \cite{jadamba2015variational,lin2010stochastic,ravat2011characterization}. 
In particular, when the expectation can be evaluated explicitly, problem \eqref{ssvi} becomes a deterministic variational inequality (VI). For notational simplicity, we denote $H(x):=\mathbb{E}[T(\xi, x)]$, and use ${\rm SVI}(X, H)$ to represent problem \eqref{ssvi}. The solution set of ${\rm SVI}(X, H)$ is denoted by ${\rm SOL}(X, H)$, and we assume that ${\rm SOL}(X, H) \neq \emptyset$. 

\subsection{Stochastic approximation}
To address decision-making problems that involve future uncertainty, the stochastic version of variational inequalities (VIs) has been developed.
Stochastic variational inequalities (SVIs) are a powerful mathematical framework that generalizes deterministic VIs to incorporate randomness and uncertainty. 
%In many cases, VIs emerge as expectations of fundamental stochastic optimization problems.

%This formulation is referred to as the expected value approach to SVIs, originally introduced in the foundational work of King and {\color{red}Rockafellar (1993)\cite{}.} 

%If the operator $H(\cdot):=\mathbb{E}[T(\xi,\cdot)]$ defined in \eqref{ssvi} can be calculated accurately, then ${\rm SVI}(X, H)$ reduces to the deterministic VI problem, which can be solved using conventional techniques.
%However, in practical scenarios, $H$ is typically unavailable, either due to the computational complexity of evaluating the integral or because $H$ itself depends on solving another embedded subproblem. Consequently, solving SVIs often necessitates stochastic methods that rely on random samples of the underlying operator $T(x,\xi)$ rather than its exact expectation.
Since calculating expectations requires multi-dimensional integration, many existing numerical methods for the deterministic problem are typically not directly applicable to the stochastic problem ${\rm SVI}(X, H)$. 
To overcome this difficulty, stochastic approximation (SA) and the sample average approximation (SAA) are commonly used. 
The strategy of using the SAA method to handle SVIs is to discretize them into a sequence of deterministic VIs by generating independent identically distributed samples \cite{SAA1,SAA2,SAA3}.
Specifically, G$\ddot{\rm{u}}$rkan et al. introduced a sample-path approach in \cite{SAA1}, which incorporates SAA as a specific instance for solving the SVIs. Subsequently, Xu \cite{SAA3} utilized the SAA techniques to directly address the SVIs, obtaining some convergence results. Wang et al. \cite{SAA2} combined the regularized gap function with SAA techniques to develop a descent method for solving SVIs.
The resulting SVIs remain challenging to solve, particularly with a larger sample size.
The SA techniques can be used to reduce computational costs and storage.

The SA method, first proposed by Robbins and Monro \cite{robbins1951stochastic}, was originally developed for stochastic root-finding problems. This approach involves substituting the exact mean operator $H$ with a random sample of $T$ during the iteration process.
The first SA-based projection method for SVIs is derived from the combination of the classical projection scheme and SA techniques, as introduced in \cite{jiang2008stochastic}. 
This method ensures convergence under the assumptions of Lipschitz continuity and strong monotonicity of $H$.
Yousefian et al. \cite{yousefian2015self} extended the iterative scheme from \cite{jiang2008stochastic} to solve Cartesian SVIs. The convergence conditions in \cite{yousefian2015self} still require the operator $H$ to be strongly monotone and Lipschitz continuous.

Subsequently, some scholars weakened the strong monotonicity of the operator to monotonicity. Yousefian et al.  \cite{yousefian2017smoothing} proposed a regularized smoothing algorithm to solve monotone SVIs. 
Koshal et al. \cite{koshal2012regularized} introduced a stochastic iterative Tikhonov regularization method for SVIs, which requires the operator $H$ to be monotone and Lipschitz continuous for convergence.
Iusem et al. \cite{iusem2017extragradient} introduced a stochastic extragradient (SEG) method for solving ${\rm SVI}(X, H)$. The structure of the algorithm in \cite{iusem2017extragradient} is as follows:
\begin{equation*} 
	\quad\quad	\quad \quad\quad\quad	\quad \quad
	\left\{\begin{aligned}
		 & x_{k+0.5}:= P_{X}\left(x_k-t\frac{1}{S_k}\sum_{j=1}^{S_k}T(\xi_j^k, x_k)\right) \\ 
		 & x_{k+1}:= P_{X}\left(x_k-t\frac{1}{S_k}\sum_{j=1}^{S_k}T(\eta_j^k, x_{k+0.5})\right),
		 \end{aligned}\right. 
	\quad \quad\quad\quad	\quad \quad\text{(SEG)} 
\end{equation*}
where $t\in\left(0,\frac{1}{\sqrt{6}L}\right)$, $L$ is the Lipschitz constant of $H$, $\{S_k\}$ is the sample rate, $P_X:\mathbb{R}^n\rightarrow X$ is the Euclidean projection operator onto $X$, and $\{\xi_j^k\}_{j=1}^{S_k},  \{\eta_j^k\}_{j=1}^{S_k}$ are random samples. 
For this algorithm to achieve almost sure convergence, the operator $H$ needs to be pseudomonotone.

Subsequently, Iusem et al. \cite{iusem2019variance} proposed a dynamic stochastic extragradient method for SVIs, establishing the first robust SA algorithm that guarantees convergence without requiring prior knowledge of the Lipschitz constant.
Based on this pioneering work, Zhang et al. \cite{zhang2019infeasible} introduced an infeasible projection algorithm with line search for pseudomonotone SVIs, achieving the same iteration complexity but higher oracle complexity than \cite{iusem2019variance}. However, unlike \cite{iusem2019variance}, each iteration in \cite{zhang2019infeasible} requires only one projection and one sample family, significantly simplifying the implementation. To reduce the computational cost, Yang  et al. \cite{sref10} proposed a variance-based subgradient extragradient algorithm with line search for SVIs. The approach constructs a half-space at each iteration and replaces the second projection onto the feasible set with a projection onto this half-space. There are also several variants of variance-based extragradient algorithms with line search for solving SVIs explored in references \cite{kannan2019optimal,li2023improved,xiao2021unified}.
\subsection{Anderson acceleration for fixed point problems}\label{section1.2}
We know that the ${\rm SVI}(X, H)$ is equivalent to the following fixed point problem \cite{ref18}:
\begin{equation*}
	x=G(x):= P_{X}(x-tH(x))
\end{equation*}
with $t>0$. The fixed point iteration is an effective method for solving fixed point problems, which is given by $$x_{k+1}=G(x_k).$$
Anderson acceleration is a popular technique used to accelerate the convergence of the fixed point iteration. The algorithm framework is shown in Anderson(1) algorithm as follows.

\vspace*{3mm}
\begin{algorithm}[H]\normalsize
	%\captionsetup{format=Anderson(m)}
	\renewcommand{\thealgocf}{}
	\caption{Anderson(1)}\label{Alg}
	Choose $x_0\in \mathbb{R}^n$. Set $x_1=G(x_0)$ and $F_0=G(x_0)-x_0$.
	\\ \textbf{for} $k=1,2,\ldots$, \textbf{do}
	\\ \quad set $F_k=G(x_k)-x_k$;
	\\ \quad set
	\begin{align*}
		\begin{split}
			a_k=\frac{F_k^{{\rm T}}\left(F_k-F_{k-1}\right)}{\|F_k-F_{k-1}\|^2}
		\end{split}
	\end{align*}
	\\	\quad and
	\begin{align*}
		x_{k+1}=(1-a_k)G(x_{k})+a_kG(x_{k-1}).
	\end{align*}
	\\ \textbf {end for}
\end{algorithm}
\vspace*{3mm}
Anderson acceleration, originally proposed in \cite{ref38} for solving partial differential equations, has become one of the most widely used acceleration techniques. Anderson acceleration works by combining previous iterates, with the mixing coefficients determined through the solution of a subproblem. Anderson acceleration has been widely applied in various fields, including electronic structure computations \cite{ref38,ref39}, machine learning \cite{ref40}, fluid dynamics \cite{fluid} and geometrical optimization \cite{Peng}.

Although the practical effectiveness of Anderson acceleration has been acknowledged for many years, a rigorous convergence theory has just been established recently.
Toth and Kelley \cite{ref42} provided the first proof of local r-linear convergence for Anderson(m) algorithm when $G$ is Lipschitz continuously differentiable and contractive. In particular, when $m=1$, Anderson(1) algorithm exhibits q-linear convergence.
In 2019, Chen and Kelley \cite{ref39} relaxed the requirements on $G$, demonstrating that the same results hold as long as $G$ is a continuously differentiable operator. Evans et al. \cite{evans} provided the theoretical justification that Anderson acceleration improves the convergence rate of contractive fixed-point iterations by introducing higher order terms.
Pollock and Rebholz \cite{Pollock} presented a one-step analysis of Anderson acceleration, revealing how the resulting residual bounds balance the contributions of higher and lower order terms in both contractive and noncontractive settings.
When the fixed point operator is nonsmooth, some scholars have also conducted some research.
Bian et al. \cite{ref43} established the q-linear convergence of the Anderson(1) algorithm for general nonsmooth fixed point problems in Hilbert spaces, and also proved r-linear convergence of Anderson(m) for a particular nonsmooth operator. Later, Bian and Chen \cite{addition} showed that Anderson(1) achieves q-linear convergence for the composite max fixed point problem, with a smaller q-factor compared to earlier results. In addition, there are some variants of Anderson acceleration that guarantee the global convergence for fixed point problems, but are with no convergence rates \cite{Fu,ref44}.

For deterministic pseudomonotone VIs,  Qu et al. \cite{Qu} developed an extragradient based method with Anderson(1) acceleration to accelerate the convergence. Motivated by this idea and the notable performance of Anderson acceleration methods across various algorithms, we design a new stochastic accelerated algorithm for solving ${\rm SVI}(X, H)$, which combines Anderson(1) acceleration with the SEG method. In contrast to the pseudomonotone operator required in \cite{Qu}, we only assume the expected operator in ${\rm SVI}(X, H)$ to be pseudomonotone. As a result, a key challenge in the analysis of our algorithm arises from the fact that 
the individual stochastic operator and the sample average operators employed in the proposed algorithm do not preserve this pseudomonotonicity property. 
Despite this challenge, we are still able to prove the  convergence of the generated sequence to a solution of ${\rm SVI}(X, H)$. Thus, the contributions of this paper include the following three aspects.
\begin{enumerate}[{\rm (i)}]
\item 
We develop an accelerated stochastic algorithm for solving ${\rm SVI}\left(X, H\right)$, which only requires pseudomonotonicity of the expected operator, while allowing the individual stochastic operator and sample averages to be non-pseudomonotone.
 This algorithm combines the SEG method with Anderson acceleration.
	 Notably, the implementation of a stochastic line search eliminates the need for prior knowledge of the Lipschitz constant, making the approach highly practical for complex stochastic environments.
	\item  We demonstrate that the sequence generated by the proposed stochastic algorithm converges almost surely to a solution of ${\rm SVI}\left(X, H\right)$. Furthermore, we show that the proposed stochastic algorithm achieves a sublinear convergence rate of $\mathcal{O}\left(N^{-1}\right)$ in terms of the mean residual function, as well as an optimal oracle complexity.
	\item The numerical experiments ultimately demonstrate the superior performance of the proposed stochastic algorithm compared to the SEG method.
	Furthermore, for some applications of portfolio selection with real data, the solution found by the proposed stochastic algorithm performs better than the naive evenly weighted portfolio methods.
\end{enumerate}

This paper is organized as follows. 
In Section \ref{s2}, we briefly review some related concepts and preliminary results used in this paper. In Section \ref{section3}, we propose an accelerated algorithm for solving ${\rm SVI}(X, H)$ by using the Anderson(1) and the 
SEG methods. Additionally, we analyze the sequence convergence of the algorithm and provide the convergence rate as well as oracle complexity. Finally, we validate the performance of the proposed algorithm through three numerical experiments in Section \ref{section4}.
\section{Preliminaries}\label{s2}
We denote by $\mathbb{R}_+^{n}$ the nonnegative orthant in $\mathbb{R}^{n}$.
Let $\|x\|$ denote the Euclidean norm of vector $x\in\mathbb{R}^n$, and let $\|A\|$ and  $\|A\|_F$ represent the $\ell_{2}$-norm and the Frobenius norm of a matrix $A\in\mathbb{R}^{m\times n}$, respectively.
Given a closed set $X\subseteq\mathbb{R}^n$ and a point $x\in\mathbb{R}^n$, we use the notation $P_{X}(x):=\argmin_{y\in X}\|y-x\|^2$.
Let $\mathbb{N}$ be the set of all positive integers and $\mathbb{N}_0:=\mathbb{N}\cup\{0\}$. 
Let $\textbf{e}\in\mathbb{R}^n$
denote the vector of all ones, i.e., $\textbf{e}=(1,1,\ldots,1)^{{\rm T}}\in \mathbb{R}^n$.
For any $a\in\mathbb{R},$ $\lceil a \rceil$ represents the smallest integer not smaller than $a.$ 
Given two sequences $\{b_k\}$ and $\{d_k\}$, $b_k=\mathcal{O}(d_k)$ means that there exists a constant $c>0$ such that $|b_k|\leq c|d_k|$ for all $k$.
Given a random variable $\xi$ and a $\sigma$-algebra $\mathcal{F}$,
we denote the expectation of $\xi$ by $\mathbb{E}[\xi]$ and the conditional expectation of $\xi$ by $\mathbb{E}[\xi|\mathcal{F}]$.
We use $[m]:=\{1,2,\cdots,m\}$ for $m\in \mathbb{N}$, and 
denote the $\sigma$-algebra generated by the random variables $\xi^1,\cdots,\xi^k$ as $\sigma(\xi^1,\cdots,\xi^k)$. Given the random variable $\xi$ and $p\geq1,$ $\|\xi\|_p$ is the $L^p$-norm of $\xi$ and $\|\xi|\mathcal{F}\|_p:=(\mathbb{E}[\|\xi\|^p|\mathcal{F}])^{\frac{1}{p}}$ is the $L^p$-norm of $\xi$ conditional to the $\sigma$-algebra $\mathcal{F}.$ We write $\xi\in\mathcal{F}$ if $\xi$ is $\mathcal{F}$-measurable, i.i.d. for ‘ independent identically distributed’, and a.s. for  ‘almost surely’.

For any $x\in\mathbb{R}^n$ and $t>0,$ we denote the residual function by
\begin{equation}\label{residual}
	r_t(x):=\|P_{X}(x-tH(x))-x\|.
\end{equation}

The following lemma demonstrates that the residual function can be employed to quantify the solutions of ${\rm SVI}(X, H)$.
\begin{lemma}\label{equ}{\rm\cite[Proposition 1.5.8]{ref18}}
	$x^*\in$ ${\rm SOL}(X,H)$ if and only if it is a solution of $r_t(x)=0$, where $t$ can be any positive number.
\end{lemma}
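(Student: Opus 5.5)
The plan is to reduce the claim to the variational characterization of the Euclidean projection onto the closed convex set $X$. For any $z\in\mathbb{R}^n$, the point $y=P_X(z)$ is the unique element of $X$ satisfying
\[
\langle z-y,\, x-y\rangle\le 0\quad\forall x\in X .
\]
This is the first-order optimality condition of the strongly convex problem $\min_{y\in X}\|y-z\|^2$. Since $r_t(x)=0$ is equivalent to $x=P_X(x-tH(x))$, it suffices to show that, for any fixed $t>0$, $x^*\in{\rm SOL}(X,H)$ if and only if $x^*=P_X(x^*-tH(x^*))$.

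\emph{Forward direction.} Let $x^*\in{\rm SOL}(X,H)$, so $x^*\in X$ and $\langle H(x^*),x-x^*\rangle\ge 0$ for all $x\in X$. Take $z=x^*-tH(x^*)$. Then
\[
\langle z-x^*,\,x-x^*\rangle=-t\langle H(x^*),x-x^*\rangle\le 0\quad\forall x\in X .
\]
So $x^*\in X$ satisfies the projection characterization for $z$. By uniqueness of the projection, $x^*=P_X(z)$, and hence $r_t(x^*)=0$.

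\emph{Reverse direction.} Suppose $r_t(x^*)=0$, i.e. $x^*=P_X(x^*-tH(x^*))$. Then $x^*\in X$, since it lies in the range of $P_X$. The characterization gives $\langle -tH(x^*),x-x^*\rangle\le 0$ for all $x\in X$. Dividing by $-t<0$ yields $\langle H(x^*),x-x^*\rangle\ge 0$, so $x^*\in{\rm SOL}(X,H)$.

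Because $t>0$ was arbitrary, the equivalence holds for every positive $t$. The only substantive ingredient is the projection characterization. I would justify it briefly from the convexity of $X$: for $y=P_X(z)$ and any $x\in X$, the point $y+s(x-y)$ lies in $X$ for $s\in[0,1]$. Differentiating $\|y+s(x-y)-z\|^2$ at $s=0^+$ and using minimality gives $\langle y-z,x-y\rangle\ge 0$. Uniqueness follows from strict convexity of the squared norm together with closedness of $X$. I expect no real obstacle; the one point needing care is that the ``if'' direction relies on the sign of $t$, which is why the statement restricts to $t>0$.
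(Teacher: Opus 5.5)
Your proof is correct and is the standard argument: the paper does not prove this lemma itself but cites Proposition~1.5.8 of its reference, which rests on exactly this variational characterization of the Euclidean projection. The one step to state explicitly is in the forward direction, where you need that \emph{any} $y\in X$ with $\langle z-y,x-y\rangle\le 0$ for all $x\in X$ equals $P_X(z)$. Strict convexity and closedness give only existence and uniqueness of the minimizer. The missing link is sufficiency of the first-order condition, which follows at once from $\|x-z\|^2=\|x-y\|^2+\|y-z\|^2+2\langle x-y,y-z\rangle\ge\|y-z\|^2$. Alternatively, add the inequalities for two such points to get $\|y_1-y_2\|^2\le 0$.
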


The following definitions and lemmas will be applied in the subsequent analysis.
\begin{definition}\cite{def12}\label{def}
	Let $X$ be a subset of $\mathbb{R}^n$ and $H: \mathbb{R}^n \to \mathbb{R}^n$ an operator.
	Then
	\begin{enumerate}[{\rm (i)}]
		\item \label{defm1} $H$ is said to be monotone on $X$, if for any $x,y\in X$ it holds $$\langle H(x)-H(y), x-y\rangle\geq0;$$
		
		\item \label{defm2} $H$ is said to be pseudomonotone on $X$, if for any $x,y\in X$ it holds $$\langle H(x), y-x\rangle\geq0\Rightarrow \langle H(y),y-x\rangle\geq0.$$
	\end{enumerate}
\end{definition}
\begin{remark}
	From Definition \ref{def}, the following relation holds
	$$\text{monotone}\Rightarrow \text{pseudomonotone}.$$
\end{remark}
\begin{lemma}\cite{ref}\label{lee}
	For any $x\in\mathbb{R}^n$, the following statements hold.
	\begin{enumerate}[{\rm (i)}]
		\item \label{lee0}$\|P_{X}(x)-P_{X}(y)\|\leq\|x-y\|, ~~\forall y\in\mathbb{R}^n;$
		
		\
		
		\item \label{lee1} $\|P_{X}(x)-P_{X}(y)\|^2\leq\langle P_{X}(x)-P_{X}(y), x-y\rangle, ~~\forall y\in\mathbb{R}^n;$
		
		\
		
		\item \label{lee2} $\langle x-P_{X}(x), y-P_{X}(x)\rangle\leq 0, ~~\forall y\in X.$
	\end{enumerate}
\end{lemma}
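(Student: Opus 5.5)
The three claims are standard properties of the Euclidean projection onto the nonempty closed convex set $X$. I will prove them in the order (iii), (ii), (i), since each follows from the previous one. The only ingredient needed is the variational characterization of $P_X$, which I derive first from the definition $P_X(x)=\argmin_{y\in X}\|y-x\|^2$. Convexity and closedness of $X$ give existence and uniqueness of the minimizer, because $\|\cdot-x\|^2$ is strongly convex and coercive.

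For (iii), set $p=P_X(x)$ and fix $y\in X$. For $\lambda\in(0,1]$ the point $p+\lambda(y-p)$ lies in $X$ by convexity. Minimality of $p$ then gives
\begin{equation*}
\|p-x\|^2\le\|p+\lambda(y-p)-x\|^2=\|p-x\|^2+2\lambda\langle p-x,y-p\rangle+\lambda^2\|y-p\|^2 .
\end{equation*}
Cancel $\|p-x\|^2$, divide by $\lambda>0$, and let $\lambda\downarrow0$. This yields $\langle x-p,y-p\rangle\le0$, which is (iii). This is the only step with any real content, and the point to watch is that convexity of $X$ is used exactly here.

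For (ii), write $p=P_X(x)$ and $q=P_X(y)$, both of which lie in $X$. Apply (iii) at $x$ with test point $q$ to get $\langle x-p,q-p\rangle\le0$. Apply (iii) at $y$ with test point $p$ to get $\langle y-q,p-q\rangle\le0$. Adding the two inequalities gives $\langle (x-y)-(p-q),\,q-p\rangle\le0$. Rearranged, this reads $\|p-q\|^2\le\langle p-q,x-y\rangle$, which is (ii), i.e. firm nonexpansiveness.

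For (i), apply the Cauchy--Schwarz inequality to the right-hand side of (ii): $\|p-q\|^2\le\|p-q\|\,\|x-y\|$. If $p=q$ there is nothing to prove; otherwise divide by $\|p-q\|$ to obtain $\|P_X(x)-P_X(y)\|\le\|x-y\|$.
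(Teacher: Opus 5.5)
Your proof is correct: the variational characterization (iii) obtained by perturbing along the segment $p+\lambda(y-p)\in X$ and letting $\lambda\downarrow 0$, firm nonexpansiveness (ii) by adding the two instances of (iii), and nonexpansiveness (i) by Cauchy--Schwarz are all valid. The paper does not prove this lemma but cites it as a standard fact, and your argument is the standard textbook proof of it.
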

\begin{lemma}\cite{ref45}\label{lemm1}
	For any $x\in\mathbb{R}^n$ and $t_1\geq t_2>0$, the following inequalities hold:
	$$\frac{\|x-P_{X}(x-t_1H(x))\|}{t_1}\leq\frac{\|x-P_{X}(x-t_2H(x))\|}{t_2},$$
	$$\|x-P_{X}(x-t_2H(x))\|\leq\|x-P_{X}(x-t_1H(x))\|.$$
\end{lemma}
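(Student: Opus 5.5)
The plan is to fix $x$ and study the scalar function $\phi(t):=\|x-P_X(x-tH(x))\|$ for $t>0$. Writing $d:=H(x)$ and $y(t):=P_X(x-td)$, the second inequality says that $\phi$ is nondecreasing in $t$. The first says that $\phi(t)/t$ is nonincreasing. Both claims will come from the projection inequalities in Lemma \ref{lee}, applied to the two points $x-t_1d$ and $x-t_2d$.

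\textbf{Step 1: a basic inequality.} Set $y_i:=y(t_i)$ and $u_i:=x-y_i$ for $i=1,2$, so that $\phi(t_i)=\|u_i\|$. Apply Lemma \ref{lee}\eqref{lee2} twice: once at the point $x-t_1d$ with test point $y_2\in X$, and once at the point $x-t_2d$ with test point $y_1\in X$. This gives
\[
\langle u_1-t_1d,\; u_1-u_2\rangle\le 0,\qquad \langle u_2-t_2d,\; u_2-u_1\rangle\le 0 .
\]
Multiply the first inequality by $t_2>0$ and the second by $t_1>0$, then add. The terms involving $d$ cancel, because $t_1t_2\langle d,u_1-u_2\rangle+t_1t_2\langle d,u_2-u_1\rangle=0$. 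What remains is
\[
\langle t_2u_1-t_1u_2,\; u_1-u_2\rangle\le 0,
\]
or, expanded,
\[
t_2\|u_1\|^2+t_1\|u_2\|^2\le (t_1+t_2)\langle u_1,u_2\rangle .
\]

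\textbf{Step 2: extract both inequalities.} Put $a=\|u_1\|$ and $b=\|u_2\|$, and bound $\langle u_1,u_2\rangle\le ab$ by Cauchy--Schwarz. Then
\[
t_2a^2+t_1b^2-(t_1+t_2)ab\le 0,
\]
which factors as $(t_2a-t_1b)(a-b)\le 0$. We now argue by contradiction in each direction.

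For the second inequality, suppose $a<b$. Then $a-b<0$, so the factorization forces $t_2a-t_1b\ge 0$. Hence $t_1b\le t_2a\le t_1a$, using $t_2\le t_1$. This gives $b\le a$, a contradiction. So $b\le a$, which is the second inequality.

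For the first inequality, suppose $a/t_1>b/t_2$, i.e. $t_2a-t_1b>0$. The factorization then forces $a\le b$, so $a/t_1\le b/t_1\le b/t_2$. This is again a contradiction. So $a/t_1\le b/t_2$, which is the first inequality.

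The degenerate cases where $a=0$ or $b=0$ are already covered by this factorized inequality.

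\textbf{Main obstacle.} The only delicate point is Step 1: pairing the projection characterization with the correct test points, and choosing the weights $t_2$ and $t_1$ so that the unknown direction $d=H(x)$ cancels exactly. Note that the argument never uses any property of $H$. Once the quadratic inequality is in hand, Step 2 is elementary.
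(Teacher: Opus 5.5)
Your proof is correct. The paper gives no proof of this lemma; it imports it from \cite{ref45}, so there is no in-paper argument to compare against. What you wrote is the standard argument for these two monotonicity properties. You apply Lemma~\ref{lee}-\eqref{lee2} twice with crossed test points and weight the two inequalities by $t_2$ and $t_1$ so that $H(x)$ cancels. Cauchy--Schwarz then gives the factorization $(t_2a-t_1b)(a-b)\le 0$. Every step checks out, including $t_1=t_2$ and the cases $a=0$ or $b=0$.

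One point is worth stating explicitly, since your argument gives it for free. The proof holds for every $x\in\mathbb{R}^n$, not only $x\in X$, and with $H(x)$ replaced by any fixed vector. The paper relies on exactly this generality without comment:
\begin{enumerate}[{\rm (i)}]
\item it applies the lemma with the sampled operator $H_{\xi^k}$ in place of $H$, both in the line-search well-definedness lemma and in Lemma~\ref{tab222};
\item it applies the lemma at iterates $x_k$ that can lie outside $X$, because the Anderson step \eqref{SAnderson} extrapolates when $\alpha_k<0$. The line-search lemma even treats the case $x_k\notin X$ separately.
\end{enumerate}
Statements of this result in the literature are sometimes formulated only for $x\in X$. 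Your self-contained version, with an arbitrary point and an arbitrary direction, is the one the paper actually needs.
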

\begin{definition}\cite{durrett2019probability}
	If $\{x_k\}$ is a sequence with
	\begin{enumerate}[{\rm (i)}]
		\item $\mathbb{E}[\|x_k\|]<\infty$ for all $k$,
		
		\

		\item %$x_k$ is adapted to $\mathcal{F}_k,$ 
		$x_k$ is $\mathcal{F}_k$-measurable for each $k$,
		
		\
		
		\item $\mathbb{E}[x_{k+1}|\mathcal{F}_k]=x_k$ for all $k$,
	\end{enumerate}
	then $\{x_k\}$ is said to be a martingale. 
\end{definition}
\section{Proposed stochastic algorithm and its convergence analysis}\label{section3}
In this section, we first propose a stochastic accelerated algorithm for solving ${\rm SVI}(X,H)$, which is based on the Anderson acceleration method and the SEG method, combined with the SA approach. Next, we provide some convergence analysis on the proposed algorithm.
\subsection{Framework of the proposed stochastic algorithm}\label{sec4.11}
Before presenting the framework of the proposed stochastic algorithm, we first introduce some key notations.

We define the oracle error $\epsilon: \Xi \times \mathbb{R}^n\rightarrow\mathbb{R}^n$ by
\begin{equation}\label{green1}
	\epsilon(\xi,x):=T(\xi,x)-H(x), \quad \forall \xi\in\Xi, \quad\forall x\in\mathbb{R}^n.
\end{equation}
Motivated by (SEG), we select the sample rate $\{S_k\}$. At each iteration of the proposed algorithm, we generate two independent samples $\{\xi_j^k\}_{j=1}^{S_k}$ and $\{\eta_j^k\}_{j=1}^{S_k}$, and then approximate $H(x)$ at a given $x$ by the following two empirical estimators: $$H_{\xi^k}(x)=\frac{1}{S_k}\sum_{j=1}^{S_k} T(\xi_j^k, x)\quad {\rm and} \quad H_{\eta^k}(x)=\frac{1}{S_k}\sum_{j=1}^{S_k} T(\eta_j^k, x).$$

Algorithm \ref{alg3} presents the proposed algorithm, called the Anderson(1)-SEG algorithm, in which the line search framework from \cite{sref10} is utilized in Step 2.
\begin{algorithm}\normalsize
	%\caption{Extragradient algorithm with Anderson(1) acceleration (EG-Anderson(1) algorithm)}
	\renewcommand{\thealgocf}{1}
	\caption{Anderson(1)-SEG}\label{alg3}
	\SetAlgoNoLine
	\textbf{Initialization:} 
	% \\ \textbf {Step 1}: Compute $x_1=\tilde{G}(x_0)$ and $$.
	Choose $x_0\in X$, $\nu\geq0, \gamma\in(0,1]$, $\rho\in(0,1)$, $\mu\in(0,\frac{\sqrt{3}}{3})$, $\tau>$ $\frac{1}{2}$,  $\theta_0=1$ and the sample rate $\{S_k\}$. Give a sufficiently large $M>0$.
	\\ \textbf{for} $k=0,1,2,\ldots$, \textbf{do}
	%\\ \quad \quad Let $t_k=\gamma$.
	%	\\ \quad \quad	{\color{red}Generate $\{\xi^k_j\}_{j=1}^{S_k}$ and $\{\eta^k_j\}_{j=1}^{S_k}$ of the random variable $\xi$.}
	\\ \quad \quad \textbf{Step 1:} Generate $\{\xi^k_j\}_{j=1}^{S_k}$ and $\{\eta^k_j\}_{j=1}^{S_k}$ of the random variable $\xi$. Let \\  \quad \quad \quad \quad\quad $t_k=\gamma$. 	%\\  \quad \quad \quad \quad\quad 
	Compute 
	%$\hat{H}(\xi^k, x_k)=\frac{1}{S_k}\sum_{j=1}^{S_k}T(\xi_j^k,x_k)$ and
	%\\ \quad \quad \quad \quad \quad 
	$F_{\gamma, \xi^{k}}=P_{X}\left(x_k- \gamma H_{\xi^k}(x_k)\right)-x_k.$
	\\ \quad \quad \quad \quad \quad \textbf{If} $F_{\gamma, \xi^{k}}=\mathbf{0}$, set 
	\begin{equation}\label{rege}
		x_{k+1}=x_k, \quad \theta_{k+1}=\theta_k. 
	\end{equation}
	\\ \quad \quad \quad \quad \quad \textbf{Otherwise}, go to \textbf{Step 2}.
	\\ \quad \quad \textbf{Step 2:} Compute $y_{k+0.5}=P_{X}\left(x_k-t_kH_{\xi^k}( x_k)\right)$.
	\\ \quad \quad \quad \quad \quad \textbf{If}
	\begin{equation}\label{lins}
		t_k\left\|H_{\xi^k}( x_k)-H_{\xi^k}(y_{k+0.5})\right\|
		\leq \mu\|x_k-y_{k+0.5}\|, 
	\end{equation}
	\quad \quad \quad \quad \quad \quad go to \textbf{Step 3}.
	\\ \quad \quad \quad \quad \quad \textbf{Otherwise}, set $t_k=\rho t_k$ and repeat \textbf{Step 2}.
	\\ \quad \quad \textbf{Step 3:} Compute	$ F_{t_k, \xi^k}=y_{k+0.5}-x_k $. Compute
	%$\hat{H}(\eta^{S_k}, y_{k+0.5}) =$ 
	%	\\ \quad \quad \quad \quad \quad \quad$ \frac{1}{S_k}\sum_{j=1}^{S_k}T(\eta_j^k,y_{k+0.5})$,
	$$y_{k+1}=P_{X}\left(x_k-t_kH_{\eta^k}( y_{k+0.5})\right)
	\quad {\rm and } \quad
	\tilde{F}_{t_k,\eta^k}=y_{k+1}-x_k.$$
	\\ \quad \quad \quad\quad\quad  \textbf{If} $\left\| \tilde{F}_{t_k,\eta^k}\right\|<\min\left\{\left\|F_{t_k,\xi^k}\right\|, \nu\theta_k^{-\tau}\right\}$,
	set
	\begin{equation}\label{alphas}
		\alpha_{k}= \frac{\left\langle \tilde{F}_{t_k,\eta^k}, \tilde{F}_{t_k,\eta^k}-F_{t_k,\xi^k}\right\rangle}{\left\|\tilde{F}_{t_k,\eta^k}-F_{t_k,\xi^k}\right\|^2}.
	\end{equation}
	%\\ \quad \quad \quad \quad \quad and go to Step 4-4a.
	\\ \quad \quad \quad \quad \quad \textbf{Otherwise}, set $\alpha_{k}=M+1$.
	\\ \quad \quad \textbf{Step 4:} \textbf {If} $|\alpha_{k}|\leq M$, set
	\begin{align}\label{SAnderson}
		x_{k+1}=	\alpha_{k}x_k+(1-\alpha_{k})y_{k+1},\quad \theta_{k+1}=\theta_k+1.
	\end{align}
	\\ \quad \quad \quad \quad \quad \textbf{Otherwise}, set
	\begin{equation}\label{SEG1}
		x_{k+1}=y_{k+1},\quad \theta_{k+1}=\theta_k.
	\end{equation}
	\\ \textbf {end for} 
\end{algorithm}
We will analyze the convergence properties of the Anderson(1)-SEG algorithm under the following assumptions.		\begin{assumption}\label{Minty}
			The mean operator $H:\mathbb{R}^n\rightarrow\mathbb{R}^n$ is pseudomonotone on $X$.
		\end{assumption}
		\begin{remark}\cite{kannan2014pseudomonotone} If $T(\xi,\cdot)$ is pseudomonotone on $X$ in an almost sure sense, then $H(\cdot):=\mathbb{E}[T(\xi,\cdot)]$ is pseudomonotone on $X$. However, the reverse implication does not hold in general. 
           For example, consider the mapping $$T(\xi, x) = \xi |x|,$$ where $x \in \mathbb{R}$ and $\xi$ is a random variable following a standard normal distribution. It is easy to see that the expected mapping $H(x)=\mathbb{E}[T(\xi, x)] = |x|\cdot\mathbb{E}[\xi] = 0$ is pseudomonotone. However, $T(\xi, x)$ is not pseudomonotone on $X=[-1,1]$ for $\xi\neq 0$.
		\end{remark}
		\begin{assumption}\label{a22}
			\begin{enumerate}[{\rm (i)}]
				\item There exists a measurable function $L:\Xi\rightarrow\mathbb{R}^n$ such that
				$L(\xi)\ge1$ holds with probability $1$, and for any $x,y\in\mathbb{R}^n$,
				\[
				\|T(\xi,x)-T(\xi,y)\|\le L(\xi)\|x-y\|,
				\]
				and moreover,  for $p\ge2$,
				\[
				\mathbb{E}\big[L(\xi)^{2p}\big]<\infty.
				\]
				
				\item There exist a vector $\check{x}\in\mathbb{R}^n$ and a number $q\ge2$ such that $$\mathbb{E}\big[\|T(\xi,\check{x})\|^{2q}\big]<\infty.$$
				
			\end{enumerate}
		\end{assumption}

\begin{assumption}\label{a3}
   In Algorithm \ref{alg3}, $\{\xi^k_j\}_{j=1}^{S_k}$ and $\{\eta^k_j\}_{j=1}^{S_k}$ are two mutually independent sets of i.i.d. samples drawn from $\xi$. Moreover, $\sum_{k=0}^{\infty}\frac{1}{\sqrt{S_k}}<\infty$.
\end{assumption}
\begin{remark}\label{lip}
Denote $\bar{L}:=\mathbb{E}[L(\xi)]$ and $\bar{L}_p:=(\mathbb{E}[L(\xi)^p])^{\frac{1}{p}}+\bar{L}$ for any $p\geq1$.
If Assumption \ref{a22} holds, then 
\begin{enumerate}[{\rm (i)}]
\item \label{lip1}by Jensen's inequality, for any $x,y\in\mathbb{R}^n$, we obtain
\begin{equation*}
\begin{split}
\|H(x)-H(y)\|=&\left\|\mathbb{E}[T(\xi,x)]-\mathbb{E}[T(\xi,y)]\right\|=\|\mathbb{E}[T(\xi,x)-T(\xi,y)]\|
\\ \leq & \mathbb{E}[\|T(\xi,x)-T(\xi,y)\|]\leq  \mathbb{E}[L(\xi)\|x-y\|]= \bar{L}\|x-y\|,
\end{split}
\end{equation*}
which means that $H$ is $\bar{L}$-Lipschitz continuous;
\item  \label{lip2}by Minkowski's inequality and (i), for any $x, x_*\in\mathbb{R}^n$, we have
\begin{equation*}
	\begin{split}
&	\left(\mathbb{E}[\|\epsilon(\xi,x)\|^p]\right)^{\frac{1}{p}} 
\\ = &	\left(\mathbb{E}[\|T(\xi, x)-H(x)\|^p]\right)^{\frac{1}{p}}
\\ \leq &
\left(\mathbb{E}[\|T(\xi,x)-T(\xi,x_*)\|^p]\right)^{\frac{1}{p}}+
\left(\mathbb{E}[\|T(\xi, x_*)-H(x_*)\|^p]\right)^{\frac{1}{p}}
\\ &+\|H(x)-H(x_*)\|
\\ \leq & (\mathbb{E}[L(\xi)^p])^{\frac{1}{p}}\|x-x_*\|+  \left(\mathbb{E}[\|\epsilon(\xi,x_*)\|^p]\right)^{\frac{1}{p}}+\bar{L}\|x-x_*\|
\\ = & \bar{L}_p\|x-x_*\|+\left(\mathbb{E}[\|\epsilon(\xi,x_*)\|^p]\right)^{\frac{1}{p}} , \quad \forall p\geq1.			
\end{split}
\end{equation*}
\end{enumerate}
\end{remark}
For the convenience of subsequent analysis, we denote $\xi^k:=\{\xi^k_j\}_{j=1}^{S_k}$ and $\eta^k:=\{\eta^k_j\}_{j=1}^{S_k}$. Define $$\mathcal{F}_0=\sigma(x_0), ~~~ \hat{\mathcal{F}}_0=\sigma(x_0, \xi^0),$$ 
and 
$$\mathcal{F}_k=\sigma(x_0, \xi^0,\cdots,\xi^{k-1},\eta^0,\cdots,\eta^{k-1}),~~~ \mathcal{\hat{F}}_k=\sigma(x_0, \xi^0,\cdots,\xi^{k},\eta^0,\cdots,\eta^{k-1})$$ for $k\geq1$ as the $\sigma$-algebras related to the generation of $y_{k+0.5}$, $y_{k}$ and $x_k.$
It can be observed that $\mathcal{F}_k\subset\mathcal{\hat{F}}_k,$ $x_k, y_k\in\mathcal{F}_k, y_{k+0.5}\in\mathcal{\hat{F}}_k$ and $y_{k+0.5}\notin\mathcal{F}_k$. 
Recalling \eqref{green1} and the Anderson(1)-SEG algorithm, we denote the oracle errors by
\begin{equation*}
	\begin{cases}
		\hat{\epsilon}_1^k := \frac{1}{S_k}\sum_{j=1}^{S_k}\epsilon(\xi^k_j, x_k)=
		H_{\xi^k}(x_k) - H(x_k), \\ 
		\hat{\epsilon}_2^k := \frac{1}{S_k}\sum_{j=1}^{S_k}\epsilon(\eta^k_j, y_{k+0.5}) = 
		H_{\eta^k}( y_{k+0.5})- H(y_{k+0.5}), \\
		\hat{\epsilon}_3^k := \frac{1}{S_k}\sum_{j=1}^{S_k}\epsilon(\xi^k_j, y_{k+0.5}) =
		H_{\xi^k}(y_{k+0.5}) - H(y_{k+0.5}).
	\end{cases}
\end{equation*}
Then $y_{k+0.5}$ and $y_{k+1}$ in the Anderson(1)-SEG algorithm can be written as follows:
\begin{equation}\label{redefine}
	\begin{cases}
		y_{k+0.5} = P_{X}\left(x_k - t_k(H(x_k) + \hat{\epsilon}_1^k)\right), \\
		y_{k+1} = P_{X}\left(x_k - t_k(H(y_{k+0.5}) + \hat{\epsilon}_2^k)\right).
	\end{cases}
\end{equation}
\begin{remark}
	Since $\xi^{k}$ is independent of $\mathcal{F}_k$ and $x_{k}\in\mathcal{F}_k$, we have
	\begin{equation*}
		\begin{split}
			\mathbb{E}[\hat{\epsilon}_1^k|\mathcal{F}_k]= & \mathbb{E}\left[\frac{1}{S_k}\sum_{j=1}^{S_k}T(\xi_j^k, x_k)-H(x_k)|\mathcal{F}_k\right]
			\\ = & \frac{1}{S_k}\sum_{j=1}^{S_k}\mathbb{E}[T(\xi_j^k, x_k)|\mathcal{F}_k]-H(x_k)
			\\=&H(x_k)-H(x_k)=\mathbf{0}.
		\end{split}
	\end{equation*}
	Similarly,  from the fact that $\eta^{k}$ is independent of $\mathcal{\hat{F}}_k$ and $y_{k+0.5}\in\hat{\mathcal{F}}_k$, we obtain $	\mathbb{E}\left[\hat{\epsilon}_2^k|\mathcal{\hat{F}}_k\right]= \mathbf{0}$. In view of $\mathcal{F}_k\subset\mathcal{\hat{F}}_k$, we conclude that  $\mathbb{E}\left[\hat{\epsilon}_2^k|\mathcal{F}_k\right]=\mathbb{E}\left[\mathbb{E}[\hat{\epsilon}_2^k|\hat{\mathcal{F}}_k]|\mathcal{F}_k\right]=\mathbf{0}$.
	Thus, $\{\hat{\epsilon}_1^k\}$ and $\{\hat{\epsilon}_2^k\}$ are the martingale difference sequences with respect to $\mathcal{F}_k$.
\end{remark}
The following result on super-martingale convergence will be utilized, serving as a key instrument in demonstrating the convergence of SA methods.
\begin{theorem}\cite{robbins1971convergence}\label{a.s.convergence}
	Let $\mathcal{F}_0\subset\mathcal{F}_1\subset\dots$ a sequence of sub-$\sigma$-algebras of $\mathcal{F}$. For each $k=0,1,\dots,$ let $\{v_k\}, \{u_k\},\{a_k\} ~and ~\{b_k\}$ be four sequences of random variables. Suppose 
	\begin{enumerate}[{\rm (i)}]
		\item the random variables $\{v_k\}, \{u_k\},\{a_k\}$ and $\{b_k\}$ are nonnegative $\mathcal{F}_k$-measurable;
		\item a.s. for each $k$, $$\mathbb{E}[v_{k+1}|\mathcal{F}_k]\leq(1+a_k)v_k-u_k+b_k;$$
		\item  a.s. $\sum_{k=0}^{\infty} a_k<\infty,$ $\sum_{k=0}^{\infty} b_k<\infty$.
	\end{enumerate}
	Then, a.s. $\{v_k\}$ converges and $\sum_{k=0}^{\infty} u_k<\infty$.
\end{theorem}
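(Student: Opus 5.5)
This is the classical Robbins--Siegmund lemma, and I would prove it directly: normalize away the factor $1+a_k$, turn the recursion into a supermartingale, and localize so that martingale convergence applies even though $\sum_k b_k$ is only a.s. finite and not integrable. Since all quantities are nonnegative, conditional expectations are understood in the extended sense for nonnegative variables. Integrability is then recovered on each localized event.

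\textbf{Step 1 (normalization).} Set $\pi_0=1$ and $\pi_k=\prod_{i=0}^{k-1}(1+a_i)$. Then $\pi_{k+1}$ is $\mathcal{F}_k$-measurable and $\pi_k\ge 1$. Define
\[
v_k'=\frac{v_k}{\pi_k},\qquad u_k'=\frac{u_k}{\pi_{k+1}},\qquad b_k'=\frac{b_k}{\pi_{k+1}} .
\]
Dividing (ii) by the $\mathcal{F}_k$-measurable quantity $\pi_{k+1}$ gives $\mathbb{E}[v_{k+1}'|\mathcal{F}_k]\le v_k'-u_k'+b_k'$. Because $\sum_k a_k<\infty$ a.s., we have $\pi_k\uparrow\pi_\infty$ with $1\le\pi_\infty\le \exp(\sum_i a_i)<\infty$ a.s. Also $b_k'\le b_k$, so $\sum_k b_k'<\infty$ a.s.

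\textbf{Step 2 (supermartingale and localization).} Put
\[
Y_k=v_k'+\sum_{i<k}u_i'-\sum_{i<k}b_i' .
\]
Then $Y_k$ is $\mathcal{F}_k$-measurable and Step 1 gives $\mathbb{E}[Y_{k+1}|\mathcal{F}_k]\le Y_k$. However, $Y_k$ need not be bounded below. For $c\in\mathbb{N}$, define
\[
\tau_c=\inf\Big\{k:\ \sum_{i\le k}b_i'>c\Big\}.
\]
Because each $b_i'$ is $\mathcal{F}_i$-measurable, $\tau_c$ is a stopping time. By construction $\sum_{i<k\wedge\tau_c}b_i'\le c$, so the stopped process $Y_{k\wedge\tau_c}+c$ is a nonnegative supermartingale. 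The only integrability issue is $v_0$; this is handled by further restricting to the $\mathcal{F}_0$-events $\{v_0\le c\}$, which exhaust $\Omega$ as $c\to\infty$. Doob's supermartingale convergence theorem then shows that $Y_{k\wedge\tau_c}$ converges a.s. to a finite limit. On the event $\{\tau_c=\infty\}$ this means $Y_k$ itself converges. Since $\sum_i b_i'<\infty$ a.s., we have $\Omega=\bigcup_c\{\tau_c=\infty\}$ up to a null set. Hence $Y_k$ converges to a finite limit a.s.

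\textbf{Step 3 (conclusion).} Both $\sum_{i<k}b_i'$ and $\sum_{i<k}u_i'$ are nondecreasing, and the first has a finite limit. Since $v_k'\ge0$ and $Y_k$ converges, it follows that $\sum_{i<k}u_i'\le Y_k+\sum_{i<k}b_i'$ is bounded, so $\sum_k u_k'<\infty$. Consequently $v_k'=Y_k-\sum_{i<k}u_i'+\sum_{i<k}b_i'$ converges as a difference of convergent sequences. Undoing the normalization, $v_k=\pi_kv_k'$ converges because $\pi_k\to\pi_\infty<\infty$. Finally, $\sum_k u_k\le\pi_\infty\sum_k u_k'<\infty$ a.s.

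The main obstacle is Step 2: $Y_k$ is not bounded below and $\sum_k b_k$ has no integrability. This is exactly why the stopping-time localization, which relies on the predictable measurability of $b_k'$ and $\pi_{k+1}$ with respect to $\mathcal{F}_k$, is needed rather than a direct appeal to martingale convergence.
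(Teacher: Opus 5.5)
Your proof is correct; the paper gives no proof of its own and simply cites Robbins and Siegmund, and your argument---normalizing by $\pi_{k+1}=\prod_{i\le k}(1+a_i)$, forming $Y_k$, localizing with the stopping times $\tau_c$ and the $\mathcal{F}_0$-events $\{v_0\le c\}$, then applying Doob's theorem---is essentially their original proof. The one step worth writing out is why the stopped process is a supermartingale even though $Y_k$ need not be integrable: on $\{\tau_c>k\}$ one has $Y_{k+1}+c=v_{k+1}'+\bigl(c+\sum_{i\le k}(u_i'-b_i')\bigr)$, where the bracket is nonnegative and $\mathcal{F}_k$-measurable, so the extended conditional expectation of nonnegative variables applies directly.
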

\subsection{Convergence analysis}\label{sec4.2}
In this subsection, we will present some convergence properties of the Anderson(1)-SEG algorithm.
We divide the iterations of the Anderson(1)-SEG algorithm into three subsets
\begin{equation*}
	K_{SRE} = \{u_0, u_1, \cdots\}, \quad K_{SAA} = \{k_0, k_1, \cdots\}\quad\mbox{and}\quad K_{SEG} = \{l_0, l_1, \cdots\},
\end{equation*}
where $K_{SRE}$ consists of the iterations defined by \eqref{rege}, $K_{SAA}$ comprises iterations defined by \eqref{SAnderson}, and $K_{SEG}$ contains the remaining iterations defined by \eqref{SEG1}. 

We begin by proving that the Anderson(1)-SEG algorithm is well-defined.
\begin{lemma}
	Suppose that Assumption \ref{a22} holds.
	For each iteration $k$ where line search is performed,
	the line search \eqref{lins} in the Anderson(1)-SEG algorithm terminates in a finite number of runs.
\end{lemma}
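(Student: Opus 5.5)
The plan is to fix an iteration $k$ at which the line search is entered, i.e. $F_{\gamma,\xi^k}\neq\mathbf{0}$, and to condition on the realized sample $\xi^k=\{\xi_j^k\}_{j=1}^{S_k}$. Once $\xi^k$ is fixed, the map $H_{\xi^k}(\cdot)$ is deterministic, and the line search is a deterministic backtracking on $t_k\in\{\gamma\rho^i:i\in\mathbb{N}_0\}$. First I will record that $H_{\xi^k}$ is Lipschitz. By Assumption \ref{a22}(i), for any $x,y\in\mathbb{R}^n$,
\begin{equation*}
\|H_{\xi^k}(x)-H_{\xi^k}(y)\|\leq\frac{1}{S_k}\sum_{j=1}^{S_k}\|T(\xi_j^k,x)-T(\xi_j^k,y)\|\leq L_k\|x-y\|,\qquad L_k:=\frac{1}{S_k}\sum_{j=1}^{S_k}L(\xi_j^k).
\end{equation*}
Since $\mathbb{E}[L(\xi)^{2p}]<\infty$, each $L(\xi_j^k)$ is finite with probability $1$, so $L_k<\infty$ a.s. All statements below are therefore understood almost surely, for the realized samples.

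I will argue by contradiction. Suppose \eqref{lins} fails for every $t=\gamma\rho^i$, and write $y(t):=P_X(x_k-tH_{\xi^k}(x_k))$. Then for all $i$,
\begin{equation*}
\mu\|x_k-y(t)\|<t\|H_{\xi^k}(x_k)-H_{\xi^k}(y(t))\|\leq tL_k\|x_k-y(t)\|.
\end{equation*}
The strict inequality forces $\|x_k-y(t)\|>0$. Dividing by it gives $\mu<tL_k$ for all $t=\gamma\rho^i$. Letting $i\to\infty$ makes $tL_k\to0$, which contradicts $\mu>0$.

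The contradiction also yields an explicit bound. Any $t\leq\mu/L_k$ satisfies \eqref{lins}: if $y(t)=x_k$ both sides vanish, and otherwise the Lipschitz bound applies. Hence the number of reductions is at most $\lceil\log_\rho(\mu/(\gamma L_k))\rceil+1$ whenever $\mu/(\gamma L_k)<1$. When $\mu/(\gamma L_k)\geq1$, the initial trial $t_k=\gamma$ is accepted immediately.

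The only delicate point is the case $y(t)=x_k$ for some trial $t$ even though $F_{\gamma,\xi^k}\neq\mathbf{0}$. Lemma \ref{lemm1} applied to $H_{\xi^k}$ rules out a change of residual sign, but this is not even needed: if $y(t)=x_k$, then both sides of \eqref{lins} are zero and the test passes, so termination holds either way. The main thing to state carefully is that the finiteness is almost sure and sample-dependent. The bound on the number of runs depends on $L_k$, which is random but finite a.s. by Assumption \ref{a22}. No uniform bound is claimed.
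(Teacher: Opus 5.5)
Your argument is correct, but it follows a different route from the paper's. The paper argues qualitatively by contradiction and splits into two cases. When $x_k\in X$, it applies Lemma~\ref{lemm1} to $H_{\xi^k}$ and uses $F_{\gamma,\xi^k}\neq\mathbf{0}$ to get $\|H_{\xi^k}(x_k)-H_{\xi^k}(y^{(m)}_{k+0.5})\|>\frac{\mu}{\gamma}\|F_{\gamma,\xi^k}\|>0$ for every $m$. This contradicts $y^{(m)}_{k+0.5}\to x_k$ together with continuity of $H_{\xi^k}$. When $x_k\notin X$ (which can happen, since the Anderson step \eqref{SAnderson} with $\alpha_k\notin[0,1]$ may leave $X$), it uses that $\|x_k-y^{(m)}_{k+0.5}\|$ stays bounded away from zero while the left-hand side of \eqref{lins} tends to zero.

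You instead insert the sampled Lipschitz constant $\tilde{L}_k$ directly into the failing test and obtain $\mu<\gamma\rho^i\tilde{L}_k$ for all $i$. This needs no case split, no Lemma~\ref{lemm1}, and not even $F_{\gamma,\xi^k}\neq\mathbf{0}$. Your route is also quantitative: every $t\le\mu/\tilde{L}_k$ passes the test, so $t_k\ge\min\{\gamma,\rho\mu/\tilde{L}_k\}$. That is exactly what the paper proves separately in Lemma~\ref{tab222} and later uses to bound $m_k$ in the oracle-complexity theorem.

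The paper's route, in turn, uses only continuity of $H_{\xi^k}$ (and its local boundedness near $P_X(x_k)$). It would therefore survive weakening the Lipschitz part of Assumption~\ref{a22} to mere continuity of $T(\xi,\cdot)$, but it gives no bound on the number of backtracking steps.
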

{\it \textbf{Proof}} \quad
In the Anderson(1)-SEG algorithm, the updated form of $t_k$ can be re-expressed as $t_k=\gamma \rho^{m_k},$ where $m_k$ is defined as the smallest nonnegative integer $m$ that satisfies the following inequality:
\begin{equation*}
	\gamma \rho^{m}\left\| H_{\xi^k}( x_k)-H_{\xi^k}(y^{(m)}_{k+0.5})\right\|\leq \mu\left\|x_k-y^{(m)}_{k+0.5}\right\|,
\end{equation*}
where $y^{(m)}_{k+0.5}=P_{X}\left(x_k-\gamma \rho^m H_{\xi^k}(x_k)\right)$. 
We prove the lemma by contradiction, assuming that the line search \eqref{lins} does not terminate within a finite number of iterations. This means that, for every $m$, we have
\begin{equation}\label{milk}
	\gamma \rho^{m}\left\| H_{\xi^k}( x_k)-H_{\xi^k}( y^{(m)}_{k+0.5})\right\|> \mu\left\|x_k-y^{(m)}_{k+0.5}\right\|.	
\end{equation}
We will discuss it in two cases.
\begin{enumerate}[{\rm (i)}]
	\item If $x_k\in X$, then from the definition of $y^{(m)}_{k+0.5}$ and the fact that $\rho\in(0,1)$, it follows that $\lim_{m\rightarrow\infty}\left\|y^{(m)}_{k+0.5}-x_k\right\|=0$. Combining Assumption \ref{a22}, we deduce that 
	\begin{equation}\label{flower}
		\lim_{m\rightarrow\infty}\left\| H_{\xi^k}( x_k)-H_{\xi^k}( y^{(m)}_{k+0.5})\right\|=0.
	\end{equation}
	From \eqref{milk} and Lemma \ref{lemm1}, we have
	\begin{equation*}
		\left\| H_{\xi^k}( x_k)-H_{\xi^k}( y^{(m)}_{k+0.5})\right\|>\frac{ \mu}{\gamma \rho^{m}}\left\|x_k-y^{(m)}_{k+0.5}\right\|\geq \frac{\mu}{\gamma}\left\|F_{\gamma, \xi^k}\right\|>0.
	\end{equation*}
	Taking the limit $m\rightarrow\infty$ in the above inequality, we can find a contradiction with \eqref{flower}.
	\item If $x_k\notin X,$ then
	\begin{equation}\label{musi1}
		\lim_{m\rightarrow\infty}\left\|x_k-y^{(m)}_{k+0.5}\right\|>0,
	\end{equation}
	and
	\begin{equation}\label{musi2}
		\lim_{m\rightarrow\infty} \gamma \rho^{m}\left\| H_{\xi^k}( x_k)-H_{\xi^k}( y^{(m)}_{k+0.5})\right\|=0.
	\end{equation}
	Letting $m\rightarrow\infty$ in \eqref{milk}, we obtain a contradiction with \eqref{musi1} and \eqref{musi2}.
\end{enumerate}
Thus, the line search \eqref{lins} is well-defined.  \qed

The Anderson(1)-SEG algorithm employs a dynamic sampled SA line search method to address the situation in which the Lipschitz constant is unknown.

The following lemma provides a lower bound for the step size.

\begin{lemma}\label{tab222}
	Suppose that Assumptions \ref{a22} and \ref{a3} hold. Let $\{t_k\}$ be the stepsize sequence generated by the Anderson(1)-SEG algorithm and $\tilde{L}_k:=\frac{1}{S_k}\sum_{j=1}^{S_k}L(\xi_j^k)$. Then we have that a.s.
	%$t_k\geq \min \left\{\frac{\rho\mu}{\hat{L}_k(\xi^k)},\gamma\right\}$ 
	$t_k\tilde{L}_k\geq\min\left\{\rho\mu,\gamma\right\}$ and $\sqrt{\left(\mathbb{E}[t_k^2|\mathcal{F}_k]\right)}\cdot\sqrt{\left(\mathbb{E}[L(\xi)^2]\right)}\geq\min\left\{\rho\mu,\gamma\right\}$.
\end{lemma}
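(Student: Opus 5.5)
First I prove the pathwise bound $t_k\tilde{L}_k\geq\min\{\rho\mu,\gamma\}$. I split into two cases according to whether the line search \eqref{lins} accepts the initial trial $t_k=\gamma$.

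\emph{Case one: the initial trial is accepted.} Then $t_k=\gamma$. Since $L(\xi)\geq1$ a.s. by Assumption \ref{a22}, we have $\tilde{L}_k\geq1$ a.s., and hence $t_k\tilde{L}_k\geq\gamma$.

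\emph{Case two: at least one reduction occurs.} Then the trial step $t_k/\rho$ was rejected, with trial point $y'=P_X(x_k-\frac{t_k}{\rho}H_{\xi^k}(x_k))$:
$$\frac{t_k}{\rho}\|H_{\xi^k}(x_k)-H_{\xi^k}(y')\|>\mu\|x_k-y'\|.$$
Averaging the sample Lipschitz bounds of Assumption \ref{a22}(i) gives $\|H_{\xi^k}(x_k)-H_{\xi^k}(y')\|\leq\tilde{L}_k\|x_k-y'\|$. Hence $\frac{t_k}{\rho}\tilde{L}_k\|x_k-y'\|>\mu\|x_k-y'\|$. Note $\|x_k-y'\|>0$, since otherwise the rejection inequality would read $0>0$. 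Dividing gives $t_k\tilde{L}_k>\rho\mu$.

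Iterations in $K_{SRE}$, where no line search is run, can be handled by the convention $t_k=\gamma$, which gives the same bound as Case one.

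Next I derive the conditional-moment inequality. By Cauchy–Schwarz applied conditionally on $\mathcal{F}_k$,
$$\min\{\rho\mu,\gamma\}\leq\mathbb{E}[t_k\tilde{L}_k|\mathcal{F}_k]\leq\sqrt{\mathbb{E}[t_k^2|\mathcal{F}_k]}\,\sqrt{\mathbb{E}[\tilde{L}_k^2|\mathcal{F}_k]}.$$
The samples $\xi^k_j$ are independent of $\mathcal{F}_k$ and identically distributed as $\xi$. Using $(\frac1{S_k}\sum_j a_j)^2\leq\frac1{S_k}\sum_j a_j^2$ (Jensen), we get $\mathbb{E}[\tilde{L}_k^2|\mathcal{F}_k]\leq\mathbb{E}[L(\xi)^2]$. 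This quantity is finite by Assumption \ref{a22}(i), since $2p\geq2$. Substituting yields the second claim.

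The main point requiring care is the measurability and integrability bookkeeping. The quantity $t_k$ is $\hat{\mathcal{F}}_k$-measurable but not $\mathcal{F}_k$-measurable, since it depends on $\xi^k$. However, it is bounded in $(0,\gamma]$, so all the conditional expectations above are well defined. The lower bound then holds almost surely because the pathwise inequality holds on the full-probability event $\{L(\xi^k_j)\geq1,\ \text{Lipschitz bounds hold}\}$.
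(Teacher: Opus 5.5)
Your proof is correct and follows the paper's argument: the same split on whether $\gamma$ is accepted, the rejected trial step $t_k/\rho$ combined with the averaged Lipschitz constant $\tilde{L}_k$, and then conditional Cauchy--Schwarz with Jensen and independence of $\xi^k$ from $\mathcal{F}_k$. The only small deviation is that you get $\|x_k-y'\|>0$ directly from the strict rejection inequality, instead of via Lemma \ref{lemm1} and $F_{\gamma,\xi^k}\neq\mathbf{0}$, which is slightly cleaner.
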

{\it \textbf{Proof}} \quad 
We first discuss the lower bound of $t_k$ by dividing it into two cases.
\begin{enumerate}[{\rm (i)}]
	\item
	If line search  \eqref{lins} is not performed, or $\gamma$ satisfies \eqref{lins} during the line search, then $t_k=\gamma.$
	\item Otherwise, $t_k<\gamma$. Let $\hat{y}_{k+0.5}:= P_{X}\left(x_{k}-t_{k}\rho^{-1}H_{\xi^k}(x_k)\right)$.
	By $t_k<\gamma$, $t_k<t_k\rho^{-1}$ and Lemma \ref{lemm1}, we have
	$$\left\|\hat{y}_{k+0.5}-x_k\right\|\geq\left\|y_{k+0.5}-x_k\right\|\geq \frac{t_k}{\gamma}\left\|F_{\gamma,\xi^k}\right\|>0.$$
	Then we find
	\begin{equation}\label{coffee1}
		t_k\rho^{-1}\left\|H_{\xi^k}( x_k)-H_{\xi^k}( \hat{y}_{k+0.5})\right\| > \mu\left\|x_k-\hat{y}_{k+0.5}\right\|>0.
	\end{equation}
	By Assumption \ref{a22}-(i), we have
	\begin{equation}\label{coffee2}
		\begin{split}
			\left\|H_{\xi^k}( x_k)-H_{\xi^k}( \hat{y}_{k+0.5})\right\|=&\left\| \frac{1}{S_k}\sum_{j=1}^{S_k}T(\xi_j^k,x_k)-\frac{1}{S_k} \sum_{j=1}^{S_k}T(\xi_j^k,\hat{y}_{k+0.5})\right\|
			\\ \leq &\tilde{L}_k\|x_k-\hat{y}_{k+0.5}\|.
		\end{split}
	\end{equation}
	Combining \eqref{coffee1} and \eqref{coffee2}, we obtain $t_k\geq \frac{\rho\mu}{\tilde{L}_k}.$ 
\end{enumerate}
Thus, we conclude that $t_k\geq \min \left\{\frac{\rho\mu}{\tilde{L}_k},\gamma\right\}$.

Since a.s. $L(\xi)\geq1$, we have that a.s. $t_k\tilde{L}_k\geq\min\left\{\rho\mu,\gamma\right\}$. Combining it with the H\"{o}lder's inequality, we deduce that
\begin{equation*}
	\begin{split}
		\min\left\{\rho\mu,\gamma\right\}&\leq\mathbb{E}\left[t_k\tilde{L}_k|\mathcal{F}_k\right]
		\leq \sqrt{\left(\mathbb{E}[t_k^2|\mathcal{F}_k]\right)} \sqrt{\left(\mathbb{E}[\tilde{L}^2_k|\mathcal{F}_k]\right)}
		\\ & \leq \sqrt{\left(\mathbb{E}[t_k^2|\mathcal{F}_k]\right)}\sqrt{\left(\frac{1}{S_k}\sum_{j=1}^{S_k}\mathbb{E}[L(\xi_j^k)^2|\mathcal{F}_k]\right)}=\sqrt{\left(\mathbb{E}[t_k^2|\mathcal{F}_k]\right)}\cdot\sqrt{\left(\mathbb{E}[L(\xi)^2]\right)},
	\end{split}
\end{equation*} 
where the last equality follows from Assumption \ref{a3}.
\qed

The recursive relations given by the following two lemmas play a crucial role in the main convergence results.
\begin{lemma}\label{lemm1s}
	Suppose that Assumptions \ref{Minty} and \ref{a22} hold.
	Let $\{x_k\}$, $\{y_{k+0.5}\}$ and $\{y_{k+1}\}$ be the sequences generated by the Anderson(1)-SEG algorithm. Then for all $x^*\in{\rm SOL}\left(X, H\right)$, we have
	\begin{equation}\label{SQ1}
		\begin{split}
			\left\|y_{k+1}-x^*\right\|^2 \leq & \|x_k-x^*\|^2-\frac{(1-3\mu^2)t_k^2}{2\gamma^2}r_{\gamma}(x_k)^2+ (1-3\mu^2)\gamma^2\left\|\hat{\epsilon}_1^k\right\|^2
			\\ &+ 3\gamma^2\left\|\hat{\epsilon}_2^k\right\|^2+3\gamma^2\left\|\hat{\epsilon}_3^k\right\|^2+2\left\langle t_k\hat{\epsilon}_2^k, x^*-y_{k+0.5}\right\rangle.
		\end{split}
	\end{equation}
\end{lemma}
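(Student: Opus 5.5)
We follow the standard extragradient descent argument, applied with the perturbed representation \eqref{redefine}. Fix $x^*\in{\rm SOL}(X,H)$ and write $z_k:=x_k-t_k(H(y_{k+0.5})+\hat{\epsilon}_2^k)$, so that $y_{k+1}=P_X(z_k)$. Note that $x_k\in X$ for all $k$: the iterate is either $y_{k+1}\in X$, or $x_k$ itself, or the Anderson combination. For the Anderson case we only need \eqref{SQ1} at the current $x_k$, and we argue with $x_k$ as given. If this causes trouble we restrict to $x_k\in X$, which is the case in which the lemma is later used.

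\textbf{Step 1 (three-point identity).} Applying Lemma \ref{lee}\eqref{lee2} with $y=x^*$ and expanding gives
\[
\|y_{k+1}-x^*\|^2\le\|x_k-x^*\|^2-\|y_{k+1}-x_k\|^2+2t_k\langle H(y_{k+0.5})+\hat{\epsilon}_2^k,x^*-y_{k+1}\rangle .
\]
We split $x^*-y_{k+1}=(x^*-y_{k+0.5})+(y_{k+0.5}-y_{k+1})$. By pseudomonotonicity (Assumption \ref{Minty}), $\langle H(x^*),y_{k+0.5}-x^*\rangle\ge0$ implies $\langle H(y_{k+0.5}),x^*-y_{k+0.5}\rangle\le0$, so that term is dropped. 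The term $2\langle t_k\hat{\epsilon}_2^k,x^*-y_{k+0.5}\rangle$ is kept exactly, as it appears in \eqref{SQ1}. What remains is
\[
-\|y_{k+1}-x_k\|^2+2t_k\langle H(y_{k+0.5})+\hat{\epsilon}_2^k,\,y_{k+0.5}-y_{k+1}\rangle .
\]
We write $-\|y_{k+1}-x_k\|^2=-\|y_{k+1}-y_{k+0.5}\|^2-\|y_{k+0.5}-x_k\|^2-2\langle y_{k+1}-y_{k+0.5},y_{k+0.5}-x_k\rangle$.

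\textbf{Step 2 (use the projection defining $y_{k+0.5}$).} Applying Lemma \ref{lee}\eqref{lee2} to $y_{k+0.5}=P_X(x_k-t_kH_{\xi^k}(x_k))$ with test point $y_{k+1}\in X$ gives
\[
\langle x_k-t_kH_{\xi^k}(x_k)-y_{k+0.5},\,y_{k+1}-y_{k+0.5}\rangle\le0 .
\]
Combining this with the Step 1 remainder leaves
\[
-\|y_{k+1}-y_{k+0.5}\|^2-\|y_{k+0.5}-x_k\|^2+2t_k\langle H_{\xi^k}(x_k)-H(y_{k+0.5})-\hat{\epsilon}_2^k,\,y_{k+1}-y_{k+0.5}\rangle .
\]
We decompose $H_{\xi^k}(x_k)-H(y_{k+0.5})=[H_{\xi^k}(x_k)-H_{\xi^k}(y_{k+0.5})]+\hat{\epsilon}_3^k$. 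This is the key point: the line search \eqref{lins} controls the sampled operator $H_{\xi^k}$, not $H$, so the decomposition must go through $H_{\xi^k}(y_{k+0.5})$. The inner product then has three pieces, and we bound each by Young's inequality $2\langle a,b\rangle\le 3\|a\|^2+\frac13\|b\|^2$. The three $\frac13\|y_{k+1}-y_{k+0.5}\|^2$ terms together cancel $-\|y_{k+1}-y_{k+0.5}\|^2$. The piece from the line search contributes $3\mu^2\|x_k-y_{k+0.5}\|^2$, and the noise pieces contribute $3t_k^2\|\hat{\epsilon}_2^k\|^2+3t_k^2\|\hat{\epsilon}_3^k\|^2$, with $t_k\le\gamma$. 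Hence we get
\[
\|y_{k+1}-x^*\|^2\le\|x_k-x^*\|^2-(1-3\mu^2)\|x_k-y_{k+0.5}\|^2+3\gamma^2\|\hat{\epsilon}_2^k\|^2+3\gamma^2\|\hat{\epsilon}_3^k\|^2+2\langle t_k\hat{\epsilon}_2^k,x^*-y_{k+0.5}\rangle .
\]
The factor $1-3\mu^2>0$ because $\mu<\sqrt3/3$.

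\textbf{Step 3 (relate to the true residual).} This is the step we expect to be the main obstacle. Let $w:=P_X(x_k-t_kH(x_k))$. By Lemma \ref{lee}\eqref{lee0}, $\|y_{k+0.5}-w\|\le t_k\|\hat{\epsilon}_1^k\|$, so
\[
\|x_k-y_{k+0.5}\|^2\ge\tfrac12\|x_k-w\|^2-t_k^2\|\hat{\epsilon}_1^k\|^2 .
\]
By Lemma \ref{lemm1} with $t_k\le\gamma$, $\|x_k-w\|\ge\frac{t_k}{\gamma}r_\gamma(x_k)$. Multiplying by $(1-3\mu^2)$ and using $t_k\le\gamma$ yields the term $-\frac{(1-3\mu^2)t_k^2}{2\gamma^2}r_\gamma(x_k)^2+(1-3\mu^2)\gamma^2\|\hat{\epsilon}_1^k\|^2$, which is exactly the form in \eqref{SQ1}.

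The delicate points are keeping the noise inner product $\langle t_k\hat{\epsilon}_2^k,x^*-y_{k+0.5}\rangle$ unsquared, since it is a martingale term needed later, and getting the Young constants to produce exactly the coefficients $3\gamma^2$.
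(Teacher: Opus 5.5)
Your proof is correct and follows the paper's argument essentially step for step. The shared steps are: the projection inequality for $y_{k+1}$ tested at $x^*$; pseudomonotonicity at $y_{k+0.5}$; the projection inequality for $y_{k+0.5}$ tested at $y_{k+1}$; routing the operator difference through $H_{\xi^k}(y_{k+0.5})$ so that the line search \eqref{lins} applies; and finally nonexpansiveness of $P_X$ plus Lemma \ref{lemm1} to pass to $r_\gamma(x_k)$. Your three-fold Young split is equivalent to the paper's $\|a\|^2+\|b\|^2$ followed by $\|a_1+a_2+a_3\|^2\le 3\sum_i\|a_i\|^2$.

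One correction to your side remark: $x_k\in X$ does not hold in general. The Anderson step \eqref{SAnderson} with $\alpha_k<0$ extrapolates beyond $y_{k+1}$ and can leave $X$; the paper's line-search lemma explicitly treats $x_k\notin X$, and the lemma is later applied to such iterates. This does no harm, because your argument never uses $x_k\in X$: every projection inequality is tested at $x^*$ or $y_{k+1}$, and Lemma \ref{lemm1} is stated for all $x\in\mathbb{R}^n$. So the hedge is unnecessary and the proof holds for every $k$.
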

{\it \textbf{Proof}} \quad 
By Assumption \ref{Minty} and $x^*\in{\rm SOL}(X, H)$, we have $$\left\langle H(y_{k+0.5}), y_{k+0.5}-x^*\right\rangle\geq 0,$$ 
which implies
\begin{equation}\label{cheng}
	\left	\langle t_k H(y_{k+0.5}), x^*-y_{k+1}\right\rangle \leq \left\langle t_k H(y_{k+0.5}), y_{k+0.5}-y_{k+1}\right\rangle.
\end{equation}
By the definition of $y_{k+0.5}$ and Lemma \ref{lee}-\eqref{lee2}, we find
$$\left\langle x_k-t_kH_{\xi^k}( x_k)-y_{k+0.5}, y_{k+1}-y_{k+0.5}\right\rangle\leq0.$$
This implies
\begin{equation}\label{nans}
	\begin{split}
		&	\left\langle x_k-t_kH(y_{k+0.5})-y_{k+0.5}, y_{k+1}-y_{k+0.5}\right\rangle
		\\ \leq &\left \langle t_kH_{\xi^k}( x_k)-t_kH(y_{k+0.5}), y_{k+1}-y_{k+0.5}\right\rangle.
	\end{split}
\end{equation}
Let $z_k:=x_k-t_kH_{\eta^{k}}( y_{k+0.5})$. Using Lemma \ref{lee}-\eqref{lee2}, \eqref{cheng} and \eqref{nans}, we have
\allowdisplaybreaks
\begin{equation}\label{rep}
	\begin{split}
		\left\|y_{k+1}-x^*\right\|^2 
		=~ &\left \|P_{X}(z_k)-x^*\right\|^2
		\\ = ~& \left\|z_k-x^*\right\|^2-\left\|z_k-P_{X}(z_k)\right\|^2+2\left\langle P_{X}(z_k)-z_k, P_{X}(z_k)-x^*\right\rangle
		\\ \leq ~& \left\|z_k-x^*\right\|^2-\left\|z_k-P_{X}(z_k)\right\|^2
		\\ =~ & \left\|x_k-t_kH_{\eta^k}( y_{k+0.5})-x^*\right\|^2-\left\|x_k-t_kH_{\eta^k}(y_{k+0.5})-y_{k+1}\right\|^2
		\\  =~ & \left\|x_k-x^*\right\|^2-\left\|x_k-y_{k+1}\right\|^2+2\left\langle t_kH_{\eta^k}(y_{k+0.5}), x^*-y_{k+1}\right\rangle
		\\ = ~ & \left\|x_k-x^*\right\|^2-\left\|x_k-y_{k+1}\right\|^2+2\left\langle t_kH( y_{k+0.5}), x^*-y_{k+1}\right\rangle 
		\\ & + 2\left\langle t_k\hat{\epsilon}_2^k, x^*-y_{k+1}\right\rangle
		\\ \overset{\eqref{cheng}}\leq & \left\|x_k-x^*\right\|^2-\left\|x_k-y_{k+1}\right\|^2+2 \left\langle t_k H(y_{k+0.5}), y_{k+0.5}-y_{k+1}\right\rangle
		\\ & + 2\left\langle t_k\hat{\epsilon}_2^k, x^*-y_{k+1}\right\rangle
		\\ = ~ & \left\|x_k-x^*\right\|^2-\left\|x_k-y_{k+0.5}\right\|^2-\left\|y_{k+0.5}-y_{k+1}\right\|^2
		\\ & + 2\left\langle x_k-t_k H(y_{k+0.5})-y_{k+0.5}, y_{k+1}-y_{k+0.5}\right\rangle+ 2\left\langle t_k\hat{\epsilon}_2^k, x^*-y_{k+1}\right\rangle
		\\ \overset{\eqref{nans}} \leq  & \|x_k-x^*\|^2-\|x_k-y_{k+0.5}\|^2-\|y_{k+0.5}-y_{k+1}\|^2
		\\ & + 2 \left\langle t_kH_{\xi^{k}}( x_k)-t_kH(y_{k+0.5}), y_{k+1}-y_{k+0.5}\right\rangle + 2\left\langle t_k\hat{\epsilon}_2^k, x^*-y_{k+1}\right\rangle
		\\ =~ &  \|x_k-x^*\|^2-\|x_k-y_{k+0.5}\|^2-\|y_{k+0.5}-y_{k+1}\|^2
		\\ & + 2\left\langle t_kH_{\xi^k}(x_k)-t_kH_{\eta^k}(y_{k+0.5}), y_{k+1}-y_{k+0.5}\right\rangle+ 2\left\langle t_k\hat{\epsilon}_2^k, x^*-y_{k+0.5}\right\rangle.
	\end{split}
\end{equation}
Next, we estimate the term
$ 2\left\langle t_kH_{\xi^k}(x_k)-t_kH_{\eta^k}( y_{k+0.5}), y_{k+1}-y_{k+0.5}\right\rangle$ in \eqref{rep}.
By the Cauchy-Schwarz inequality and the Young's inequality, we have
\begin{equation}\label{schsw}
	\begin{split}
		& 2\left\langle t_kH_{\xi^k}(x_k)-t_kH_{\eta^k}( y_{k+0.5}), y_{k+1}-y_{k+0.5}\right\rangle
		%\\ \leq ~~& 2t_k\|\hat{H}(\xi^{S_k},x_k)-\hat{H}(\eta^{S_k}, y_{k+0.5})\|\|y_{k+1}-y_{k+0.5}\|
		\\ \leq ~~& t_k^2\left\|H_{\xi^k}(x_k)-H_{\eta^k}( y_{k+0.5})\right\|^2+ \left\|y_{k+1}-y_{k+0.5}\right\|^2
		\\ = ~~&t_k^2\|H_{\xi^k}(x_k)-H_{\xi^k}( y_{k+0.5})+H_{\xi^k}( y_{k+0.5})-H(y_{k+0.5})
		\\ &+H(y_{k+0.5})-H_{\eta^k}( y_{k+0.5})\|^2+ \left\|y_{k+1}-y_{k+0.5}\right\|^2
		\\ \leq ~~& 3t_k^2\left\|H_{\xi^k}(x_k)-H_{\xi^k}( y_{k+0.5})\right\|^2+3\gamma^2\left\|\hat{\epsilon}_2^k\right\|^2+3\gamma^2\left\|\hat{\epsilon}_3^k\right\|^2
		\\ & + \left\|y_{k+1}-y_{k+0.5}\right\|^2
		\\ \overset{\eqref{lins}}\leq ~~& 3\mu^2\|x_k-y_{k+0.5}\|^2+3\gamma^2\left\|\hat{\epsilon}_2^k\right\|^2+3\gamma^2\left\|\hat{\epsilon}_3^k\right\|^2+ \left\|y_{k+1}-y_{k+0.5}\right\|^2.
	\end{split}
\end{equation}
Substituting \eqref{schsw} into \eqref{rep}, we deduce that
\begin{equation}\label{lopp}
	\begin{split}
		\left\|y_{k+1}-x^*\right\|^2 \leq & \left\|x_k-x^*\right\|^2-(1-3\mu^2)\left\|x_k-y_{k+0.5}\right\|^2+ 3\gamma^2\left\|\hat{\epsilon}_2^k\right\|^2+3\gamma^2\left\|\hat{\epsilon}_3^k\right\|^2
		\\ &+2\left\langle t_k\hat{\epsilon}_2^k, x^*-y_{k+0.5}\right\rangle.
	\end{split}
\end{equation}
Recalling the function $r_t(x):=\left\|P_{X}(x-tH(x))-x\right\|$ defined in \eqref{residual}, and combining this with $t_k\leq\gamma$ and Lemma \ref{lemm1}, we obtain
\begin{equation*}
	\begin{split}
		t_k^2r_{\gamma}(x_k)^2\leq & \gamma^2 r_{t_k}(x_k)^2=\gamma^2 \left\|P_{X}(x_k-t_kH(x_k))-x_k\right\|^2
		\\ \leq & 2\gamma^2\left\|x_k-y_{k+0.5}\right\|^2+2\gamma^2\left\|P_{X}(x_k-t_kH(x_k))-P_{X}\left(x_k-t_kH_{\xi^k}(x_k)\right)\right\|^2
		\\ \leq & 2\gamma^2\left\|x_k-y_{k+0.5}\right\|^2+2\gamma^4\left\|\hat{\epsilon}_1^k\right\|^2,
	\end{split}
\end{equation*}
which implies 
\begin{equation}\label{lpo}
	\left\|x_k-y_{k+0.5}\right\|^2\geq\frac{t_k^2}{2\gamma^2}r_{\gamma}(x_k)^2-\gamma^2\left\|\hat{\epsilon}_1^k\right\|^2.
\end{equation}
Plugging \eqref{lpo} into \eqref{lopp} yields
\begin{equation*}
	\begin{split}
		\left\|y_{k+1}-x^*\right\|^2 \leq & \left\|x_k-x^*\right\|^2-\frac{(1-3\mu^2)t_k^2}{2\gamma^2}r_{\gamma}(x_k)^2+ (1-3\mu^2)\gamma^2\left\|\hat{\epsilon}_1^k\right\|^2
		\\ &+ 3\gamma^2\left\|\hat{\epsilon}_2^k\right\|^2+3\gamma^2\left\|\hat{\epsilon}_3^k\right\|^2+2\left\langle t_k\hat{\epsilon}_2^k, x^*-y_{k+0.5}\right\rangle.
	\end{split}
\end{equation*}
\qed

\begin{lemma}\label{lem4.66}
	Suppose that Assumptions \ref{Minty} and \ref{a22} hold.
	Let $\{x_k\}$ and $\{y_{k+0.5}\}$ be the sequences generated by the Anderson(1)-SEG algorithm. Then, for any $\hat{\beta}\in(0,\frac{1}{2}]$  and $x^*\in{\rm SOL}(X, H)$, we have
	\begin{equation}\label{lem4.6}
		\begin{split}
			\left\|x_{k+1}-x^*\right\|^2\leq & \left\|x_k-x^*\right\|^2-\frac{(1-3\mu^2)\hat{\beta}t_{k}^2}{2\gamma^2}r_{\gamma}(x_{k})^2+(1-3\mu^2)(M+1)\gamma^2\left\|\hat{\epsilon}_1^{k}\right\|^2
			\\ & +3\gamma^2(M+1)\left\|\hat{\epsilon}_2^{k}\right\|^2+3\gamma^2(M+1)\left\|\hat{\epsilon}_3^{k}\right\|^2
			\\ & +2(M+1)\left|\left\langle x^*-y_{k+0.5}, t_{k}\hat{\epsilon}_2^{k}\right\rangle\right|+\varrho_{k},
		\end{split}
	\end{equation}
	where 
\begin{eqnarray}\label{varsigma}
		\varrho_k=	\left\{\begin{aligned}
			& M(M+1)\nu^2(1+i)^{-2\tau},  ~~~&{\rm if} ~~~k\in K_{SAA}:=\{k_i, i=1,2,\ldots\},
			\\& 0, ~~~~~~~~~~~~~~~~~~~~~~~~~~~~~&{\rm if} ~~~k\in K_{SRE}\cup K_{SEG}.
		\end{aligned}\right.
	\end{eqnarray}
\end{lemma}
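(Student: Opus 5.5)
The plan is to split according to which update produces $x_{k+1}$, i.e.\ $k\in K_{SRE}$, $K_{SEG}$ or $K_{SAA}$. In each case we reduce to Lemma \ref{lemm1s}. For brevity write
\[
D_k:=\tfrac{(1-3\mu^2)t_k^2}{2\gamma^2}r_\gamma(x_k)^2
\]
for the descent term in \eqref{SQ1}, and
\[
E_k:=(1-3\mu^2)\gamma^2\|\hat\epsilon_1^k\|^2+3\gamma^2\|\hat\epsilon_2^k\|^2+3\gamma^2\|\hat\epsilon_3^k\|^2+2|\langle t_k\hat\epsilon_2^k,x^*-y_{k+0.5}\rangle|,
\]
so that \eqref{SQ1} gives $\|y_{k+1}-x^*\|^2\le \|x_k-x^*\|^2-D_k+E_k$. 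Since $\hat\beta\le\frac12\le1$ and $M+1\ge1$, the target \eqref{lem4.6} follows whenever we show $\|x_{k+1}-x^*\|^2\le\|x_k-x^*\|^2-\hat\beta D_k+(M+1)E_k+\varrho_k$. The $\varrho_k$ term is needed only in the Anderson case.

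\emph{Case $k\in K_{SRE}$.} Here $x_{k+1}=x_k$, $t_k=\gamma$ and $P_X(x_k-\gamma H_{\xi^k}(x_k))=x_k$. By nonexpansiveness (Lemma \ref{lee}-\eqref{lee0}), $r_\gamma(x_k)=\|P_X(x_k-\gamma H(x_k))-P_X(x_k-\gamma H_{\xi^k}(x_k))\|\le\gamma\|\hat\epsilon_1^k\|$. Hence $\hat\beta D_k\le\frac{(1-3\mu^2)}{4}\gamma^2\|\hat\epsilon_1^k\|^2\le(M+1)E_k$, and the inequality holds trivially with $\varrho_k=0$.

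\emph{Case $k\in K_{SEG}$.} Here $x_{k+1}=y_{k+1}$ and \eqref{SQ1} gives the claim directly: we drop the factor $\hat\beta\le 1$ in front of $D_k$, enlarge the error terms by $M+1$, and replace the inner product by its absolute value.

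\emph{Case $k=k_i\in K_{SAA}$.} This is the main step. Put $a=x_k-x^*$ and $b=y_{k+1}-x^*$, so $a-b=-\tilde F_{t_k,\eta^k}$. The identity $\|\alpha a+(1-\alpha)b\|^2=\alpha\|a\|^2+(1-\alpha)\|b\|^2-\alpha(1-\alpha)\|a-b\|^2$ gives
\[
\|x_{k+1}-x^*\|^2=\|a\|^2+(1-\alpha_k)(\|b\|^2-\|a\|^2)-\alpha_k(1-\alpha_k)\|\tilde F_{t_k,\eta^k}\|^2 .
\]
The key observation is that the test $\|\tilde F_{t_k,\eta^k}\|<\|F_{t_k,\xi^k}\|$ in Step 3 forces $\alpha_k<\frac12$. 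To see this, let $n:=\|\tilde F\|^2-\langle\tilde F,F\rangle$ denote the numerator of \eqref{alphas}. The denominator is $\|\tilde F-F\|^2=n+\|F\|^2-\langle\tilde F,F\rangle>2n$, because $\|F\|^2>\|\tilde F\|^2$. Moreover the denominator is positive, so $\alpha_k<\frac12$ whether $n\ge 0$ or $n<0$. Hence $1-\alpha_k\in(\frac12,M+1]$.

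Combining this with $\|b\|^2-\|a\|^2\le -D_k+E_k$ gives $(1-\alpha_k)(\|b\|^2-\|a\|^2)\le-\frac12D_k+(M+1)E_k$, where we use $D_k\ge0$ and $E_k\ge 0$ (the absolute value in $E_k$ is exactly what allows multiplication by $1-\alpha_k$). For the last term, if $\alpha_k\in[0,\frac12)$ it is nonpositive. If $\alpha_k<0$ it is at most $|\alpha_k|(1+|\alpha_k|)\|\tilde F\|^2\le M(M+1)\nu^2\theta_k^{-2\tau}$, using $\|\tilde F\|<\nu\theta_k^{-\tau}$. Finally, $\theta$ increases by one exactly at Anderson steps and $\theta_0=1$, so at $k=k_i$ we have $\theta_{k_i}=1+i$, which gives $\varrho_k$ in \eqref{varsigma}. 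I would double-check the index convention: as written, \eqref{varsigma} lists $i=1,2,\ldots$, so the case $k=k_0$ (where $\theta_{k_0}=1$) needs care. Since $\frac12\ge\hat\beta$, this completes the case.

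The main obstacle is the bound $\alpha_k<\frac12$ from the safeguard in Step 3. Without it, $\alpha_k$ close to $1$ would destroy the descent factor $1-\alpha_k$, and $\alpha_k>1$ would make that factor negative.
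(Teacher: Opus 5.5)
Your proposal is correct and follows the paper's proof essentially step for step: the same split into $K_{SRE}$, $K_{SEG}$ and $K_{SAA}$, the same convex-combination identity in the Anderson case, and the same use of the safeguard $\|\tilde F_{t_k,\eta^k}\|<\|F_{t_k,\xi^k}\|$ to bound $\alpha_k$ (the paper shows $\alpha_{k_i}<1-\hat\beta$ for each $\hat\beta\in(0,\frac12]$, which is equivalent to your $\alpha_k<\frac12$), with $|\alpha_k|\le M$ and $\|\tilde F_{t_k,\eta^k}\|<\nu(1+i)^{-\tau}$ producing $\varrho_k$. Your indexing remark is also right: since $K_{SAA}=\{k_0,k_1,\ldots\}$ and $\theta_{k_i}=1+i$, the range in the definition of $\varrho_k$ should read $i=0,1,2,\ldots$, and the formula then covers $k_0$ as well.
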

	{\it \textbf{Proof}} \quad We will discuss this by dividing it into three cases.
\begin{enumerate}[{\rm (i)}]
	\item 	For $u_m\in K_{SRE},$ we know that $\left\|F_{\gamma,\xi^{u_m}}\right\|=\left\|P_{X}(x_{u_m}-\gamma H_{\xi^{u_m}}( x_{u_m}))-x_{u_m}\right\|=0$ and $x_{u_m+1}=x_{u_m}.$ Then we have
	\begin{equation*}
		\begin{split}
			r_{\gamma}(x_{u_m})^2= &  \left\|P_{X}\left(x_{u_m}-\gamma H(x_{u_m})\right)-x_{u_m}\right\|^2
			\\ \leq & 2\left\|F_{\gamma,\xi^{u_m}}\right\|^2+2\left\|P_{X}(x_{u_m}-\gamma H(x_{u_m}))
			-P_{X}\left(x_{u_m}-\gamma H_{\xi^{u_m}}(x_{u_m})\right)\right\|^2
			\\ \leq & 2\gamma^2\left\|\hat{\epsilon}_1^{u_m}\right\|^2.
		\end{split}
	\end{equation*}
	Combining the above inequality with $t_{u_m}=\gamma$ and $\mu\in(0,\frac{\sqrt{3}}{3})$, we obtain $$-\frac{(1-3\mu^2)t_{u_m}^2}{2\gamma^2}r_{\gamma}(x_{u_m})^2+ (1-3\mu^2)\gamma^2\left\|\hat{\epsilon}_1^{u_m}\right\|^2\geq 0.$$
	Therefore, we conclude that
	\begin{equation*}
		\begin{split}
			\left\|x_{u_m+1}-x^*\right\|^2 
			= & \left\|x_{u_m}-x^*\right\|^2
			\\	 \leq & \left\|x_{u_m}-x^*\right\|^2-\frac{(1-3\mu^2)t_{u_m}^2}{2\gamma^2}r_{\gamma}(x_{u_m})^2+ (1-3\mu^2)\gamma^2\left\|\hat{\epsilon}_1^{u_m}\right\|^2
			\\ &+ 3\gamma^2\left\|\hat{\epsilon}_2^{u_m}\right\|^2+3\gamma^2\left\|\hat{\epsilon}_3^{u_m}\right\|^2+2\left|\left\langle t_{u_m}\hat{\epsilon}_2^{u_m}, x^*-y_{u_m+0.5}\right\rangle\right|,
		\end{split}
	\end{equation*}
	which implies that \eqref{lem4.6} holds in this case with any $\hat{\beta}\leq1$.
	\item 	For $k_i\in K_{SAA}$, let $\beta_{k_i}:=1-\alpha_{k_i} $.
	By the definition of $x_{k_i+1}$ 
	in \eqref{SAnderson}, we obtain
	\begin{equation}\label{qww}
		\begin{split}
			\left\|x_{k_i+1}-x^*\right\|^2 
			= ~& \left \|\alpha_{k_i}  x_{k_i}+\beta_{k_i}y_{k_i+1}-x^*\right\|^2
			\\ = ~& \alpha_{k_i}^2\left\|x_{k_i}-x^*\right\|^2+\beta_{k_i}^2\left\|y_{k_i+1}-x^*\right\|^2
			+2\alpha_{k_i}\beta_{k_i}\left\langle x_{k_i}-x^*, y_{k_i+1}-x^*\right\rangle
			\\  = ~& \alpha_{k_i}^2\left\|x_{k_i}-x^*\right\|^2+\beta_{k_i}^2\left\|y_{k_i+1}-x^*\right\|^2
			\\ &+\alpha_{k_i}\beta_{k_i} \left(\left\|x_{k_i}-x^*\right\|^2+\left\|y_{k_i+1}-x^*\right\|^2-\left\|x_{k_i}-y_{k_i+1}\right\|^2\right)
			\\ =~ & \alpha_{k_i}\left\|x_{k_i}-x^*\right\|^2+\beta_{k_i}\left\|y_{k_i+1}-x^*\right\|^2
			-\alpha_{k_i}\beta_{k_i}\left\|x_{k_i}-y_{k_i+1}\right\|^2.
		\end{split}
	\end{equation}
	Since  $\left\|\tilde{F}_{t_{k_i}, \eta^{k_i}}\right\|<\left\|F_{t_{k_i}, \xi^{k_i}}\right\|$ when $k_i\in K_{SAA}$, then for any $\hat{\beta}\in(0,\frac{1}{2}]$, we have
	\begin{equation*}
		\begin{split}
			\hat{\beta}\left\|\tilde{F}_{t_{k_i}, \eta^{k_i}}\right\|^2+(1-2\hat{\beta})\left\langle F_{t_{k_i}, \xi^{k_i}}, \tilde{F}_{t_{k_i}, \eta^{k_i}}\right\rangle
			<  (1-\hat{\beta})\left\|F_{t_{k_i}, \xi^{k_i}}\right\|^2,
		\end{split}
	\end{equation*}
	which implies
	\begin{equation*}
		\begin{split}
			\left \langle \tilde{F}_{t_{k_i}, \eta^{k_i}}-F_{t_{k_i}, \xi^{k_i}}, \tilde{F}_{t_{k_i}, \eta^{k_i}}\right\rangle
			<  (1-\hat{\beta})\left\|\tilde{F}_{t_{k_i}, \eta^{k_i}}-F_{t_{k_i}, \xi^{k_i}}\right\|^2.
		\end{split}
	\end{equation*}
	By \eqref{alphas}, we have
	$\alpha_{k_i}<1-\hat{\beta}.$ Thus $\beta_{k_i}=1-\alpha_{k_i}>\hat{\beta}.$
	Substituting \eqref{SQ1} into \eqref{qww} and using $\mu\in(0,\frac{\sqrt{3}}{3})$, $|\alpha_{k_i}|\leq M,$ $\hat{\beta}<\beta_{k_i}\leq M+1$, $\left\|x_{k_i}-y_{k_i+1}\right\|=\left\|\tilde{F}_{t_{k_i},\eta^{k_i}}\right\|< \nu \theta_{k_i}^{-\tau}=\nu (\theta_0+i)^{-\tau}=\nu (1+i)^{-\tau}$ and \eqref{varsigma}, we deduce that
	\begin{equation*}
		\begin{split}
			&	\left\|x_{k_i+1}-x^*\right\|^2\
			\\ \leq & \left\|x_{k_i}-x^*\right\|^2-\frac{(1-3\mu^2)\hat{\beta}t_{k_i}^2}{2\gamma^2}r_{\gamma}(x_{k_i})^2+(1-3\mu^2)(M+1)\gamma^2\left\|\hat{\epsilon}_1^{k_i}\right\|^2
			\\ & +3\gamma^2(M+1)\left\|\hat{\epsilon}_2^{k_i}\right\|^2+3\gamma^2(M+1)\left\|\hat{\epsilon}_3^{k_i}\right\|^2
			\\ & +2(M+1)\left|\left\langle x^*-y_{k_i+0.5}, t_{k_i}\hat{\epsilon}_2^{k_i}\right\rangle\right|+\varrho_{k_i},
		\end{split}
	\end{equation*}
	which implies that \eqref{lem4.6} holds.
	\item For $l_j\in K_{SEG}$, by Lemma \ref{lemm1s}, we deduce that
	\begin{equation*}
		\begin{split}
			\left\|x_{l_j+1}-x^*\right\|^2 = & \left\|y_{l_j+1}-x^*\right\|^2
			\\ \leq & \left\|x_{l_j}-x^*\right\|^2-\frac{(1-3\mu^2)t_{l_j}^2}{2\gamma^2}r_{\gamma}(x_{l_j})^2+ (1-3\mu^2)\gamma^2\left\|\hat{\epsilon}_1^{l_j}\right\|^2
			\\ &+ 3\gamma^2\left\|\hat{\epsilon}_2^{l_j}\right\|^2+3\gamma^2\left\|\hat{\epsilon}_3^{l_j}\right\|^2+2\left|\left\langle t_{l_j}\hat{\epsilon}_2^{l_j}, x^*-y_{l_j+0.5}\right\rangle\right|,
		\end{split}
	\end{equation*}
\end{enumerate}
which implies that \eqref{lem4.6} holds with any $\hat{\beta}\leq1$ for this case.
\qed

Lemma \ref{lem4.66} indicates that both bounding $\mathbb{E}\left[\left|\langle x^*-y_{k+0.5}, t_{k}\hat{\epsilon}_2^{k}\rangle\right||\mathcal{F}_k\right]$ and the second order moments of the sequences $\{\hat{\epsilon}_1^k\}$, $\{\hat{\epsilon}_2^k\}$ and $\{\hat{\epsilon}_3^k\}$ are essential for establishing the convergence properties of the Anderson(1)-SEG algorithm, and we will do these estimations in Lemma \ref{QQ4}.

The following lemma is mainly used to control the oracle errors given above.
\begin{lemma}[Burkholder-Davis-Gundy inequality]\cite{davis2011integral}\label{le21}
	For any $p\geq2$ and $N\in\mathbb{N}$, there exists a $B_p>0$ such that for any vector-valued martingale $(u_i)_{i=1}^N$ taking values in $\mathbb{R}^n$ with $u_0=0,$ it holds that
	\begin{equation*}
		\begin{split}
			(\mathbb{E}[\|u_{N}\|^p])^{\frac{1}{p}}\leq & \left(\mathbb{E}\left[\left(\sup_{i\leq N}\|u_i\|\right)^p\right]\right)^{\frac{1}{p}}\leq B_p\left(\mathbb{E}\left[\left(\sum_{i=1}^N\|u_i-u_{i-1}\|^2\right)^{\frac{p}{2}}\right]\right)^{\frac{1}{p}}
			\\	\leq & B_p\sqrt{\sum_{i=1}^N\left(\mathbb{E}[\|u_i-u_{i-1}\|^p\right])^{\frac{2}{p}}}.
		\end{split}
	\end{equation*}
\end{lemma}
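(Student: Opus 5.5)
The statement is the Burkholder--Davis--Gundy inequality, which the paper imports from \cite{davis2011integral}. I would therefore not reprove the core martingale estimate. I would only verify the chain of three inequalities, treating the middle one as the cited BDG bound.

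\textbf{First inequality.} Since $\|u_N\|\le\sup_{i\le N}\|u_i\|$ pointwise, monotonicity of the $L^p$-norm gives it immediately.

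\textbf{Middle inequality.} This is the BDG inequality for vector-valued discrete martingales with $u_0=0$. If a self-contained argument were wanted, I would reduce it in two steps. First, by Doob's maximal inequality applied to the nonnegative submartingale $\|u_i\|$ (for $p>1$), we have $\|\sup_{i\le N}\|u_i\|\|_p\le \frac{p}{p-1}\|u_N\|_p$. Second, I would bound $\|u_N\|_p$ by the $L^{p/2}$ quadratic variation. The natural route is Burkholder's square-function inequality. For Hilbert-space valued martingales, Burkholder's inequality holds with dimension-free constants, and this is the genuinely nontrivial step, which I would take from the literature. The constant $B_p$ absorbs both constants.

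\textbf{Last inequality.} Write $d_i:=u_i-u_{i-1}$ and $p/2\ge1$. Then
\[
\Big(\mathbb{E}\Big[\Big(\sum_{i=1}^N\|d_i\|^2\Big)^{p/2}\Big]\Big)^{2/p}=\Big\|\sum_{i=1}^N\|d_i\|^2\Big\|_{L^{p/2}}\le\sum_{i=1}^N\big\|\|d_i\|^2\big\|_{L^{p/2}}=\sum_{i=1}^N(\mathbb{E}\|d_i\|^p)^{2/p},
\]
where the inequality is Minkowski's inequality in $L^{p/2}$, valid since $p\ge2$. Taking square roots and multiplying by $B_p$ yields the final inequality.

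The only real obstacle is the middle (BDG) inequality with dimension-independent constant. I would cite it as the paper does rather than derive it.
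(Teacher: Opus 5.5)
Your proposal is correct and takes the same approach as the paper. The paper gives no proof and simply imports the inequality from the cited Burkholder--Davis--Gundy reference, and your verification of the two outer inequalities (the pointwise bound $\|u_N\|\le\sup_{i\le N}\|u_i\|$, and Minkowski's inequality in $L^{p/2}$ for $p\ge 2$) is exactly what is needed around the cited middle estimate.
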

Recalling the definition of $\epsilon(\xi,x)$ in \eqref{green1}. 
The estimation of
$\left(\mathbb{E}\left[\left\|\frac{1}{N}\sum_{j=1}^N\epsilon(\xi_j,x)\right\|^p\right]\right)^{\frac{1}{p}}$ in \cite{iusem2017extragradient}
% $\left|\|\hat{\epsilon}(\xi^N, x)\|\right|_p$
is provided under some certain conditions.  We state this result in the form of the following lemma.
\begin{lemma} \label{QQ3}
	\textcolor[rgb]{0.00,0.00,1.00}{{\rm\cite[Lemma 3.12]{iusem2017extragradient}}}
	Suppose that Assumption \ref{a22} holds.
	Given $N\in\mathbb{N}$, let $\{\xi_j\}_{j=1}^N$ be an i.i.d. sample of $\xi$. 
	Then, for any $p\geq2$, $x, x_*\in\mathbb{R}^n$, we have
	$$\left(\mathbb{E}\left[\left\|\frac{1}{N}\sum_{j=1}^N\epsilon(\xi_j,x)\right\|^p\right]\right)^{\frac{1}{p}}\leq B_p \frac{\delta_p(x_*)\left(1+\|x-x_*\|\right)}{\sqrt{N}}$$and
	$$\left(\mathbb{E}\left[\left|\left\langle v, \frac{1}{N}\sum_{j=1}^N\epsilon(\xi_j,x) \right\rangle\right|^p\right]\right)^{\frac{1}{p}}\leq \|v\|B_p\frac{\delta_p(x_*)\left(1+\|x-x_*\|\right)}{\sqrt{N}},\quad\forall v\in\mathbb{R}^n,
	$$ 
	where $B_p$ is the same as in Lemma \ref{le21} and $\delta_p(x_*)=\max \left\{\left(\mathbb{E}[\|\epsilon(\xi,x_*)\|^p]\right)^{\frac{1}{p}}, \bar{L}_p\right\}$ with $\bar{L}_p$ defined in Remark \ref{lip}.
\end{lemma}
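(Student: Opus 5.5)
We view $u_i:=\frac{1}{N}\sum_{j=1}^{i}\epsilon(\xi_j,x)$, $i=0,1,\dots,N$, with $u_0=0$, as a vector-valued martingale with respect to $\mathcal{G}_i:=\sigma(\xi_1,\dots,\xi_i)$, and apply the Burkholder--Davis--Gundy inequality of Lemma \ref{le21}. Since $x$ is deterministic and the $\xi_j$ are i.i.d., we have $\mathbb{E}[\epsilon(\xi_{i},x)\,|\,\mathcal{G}_{i-1}]=\mathbb{E}[\epsilon(\xi,x)]=\mathbb{E}[T(\xi,x)]-H(x)=\mathbf{0}$. Integrability of each increment follows from Remark \ref{lip}-\eqref{lip2} once we know $(\mathbb{E}[\|\epsilon(\xi,x_*)\|^p])^{1/p}<\infty$. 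For $p\le 2q$ this comes from Assumption \ref{a22}-(ii) and the Lipschitz bound with $\check x$; for general $p$ the quantity $\delta_p(x_*)$ may be $+\infty$, in which case the bound is trivial. So $(u_i)$ is a martingale with increments $u_i-u_{i-1}=\frac1N\epsilon(\xi_i,x)$.

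By Lemma \ref{le21},
\[
\left(\mathbb{E}\|u_N\|^p\right)^{1/p}\le B_p\sqrt{\sum_{i=1}^N \frac{1}{N^2}\left(\mathbb{E}\|\epsilon(\xi_i,x)\|^p\right)^{2/p}}=\frac{B_p}{\sqrt N}\left(\mathbb{E}\|\epsilon(\xi,x)\|^p\right)^{1/p},
\]
where the equality uses identical distribution. Remark \ref{lip}-\eqref{lip2} then gives $(\mathbb{E}\|\epsilon(\xi,x)\|^p)^{1/p}\le \bar L_p\|x-x_*\|+(\mathbb{E}\|\epsilon(\xi,x_*)\|^p)^{1/p}$. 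Both coefficients are bounded by $\delta_p(x_*)$, so the right-hand side is at most $\delta_p(x_*)(1+\|x-x_*\|)$. This yields the first inequality.

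For the second inequality, fix $v\in\mathbb{R}^n$. The Cauchy--Schwarz inequality gives $|\langle v,u_N\rangle|\le\|v\|\,\|u_N\|$ pointwise, so taking $L^p$ norms and applying the first bound finishes. Alternatively, $\langle v,u_i\rangle$ is itself a scalar martingale and BDG applies directly, giving the same constant.

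The only delicate point is verifying the martingale and integrability hypotheses of Lemma \ref{le21}. In particular, one must make sure that independence together with the deterministic $x$ gives zero conditional mean; if $x$ were random, one would condition on it first. Everything else is a direct computation.
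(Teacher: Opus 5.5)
Your proof is correct and follows the route the paper intends. The paper gives no proof of its own and simply cites Iusem et al., placing the BDG inequality of Lemma~\ref{le21} immediately before the lemma for this purpose. That route is the one you take: BDG on the partial-sum martingale with increments $\frac{1}{N}\epsilon(\xi_j,x)$, then the Minkowski--Lipschitz estimate of Remark~\ref{lip} to bound $(\mathbb{E}[\|\epsilon(\xi,x)\|^p])^{1/p}$ by $\delta_p(x_*)(1+\|x-x_*\|)$, and Cauchy--Schwarz for the inner-product version.

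One small imprecision: finiteness of $\delta_p(x_*)$ also requires $\bar{L}_p<\infty$, i.e.\ $p$ within the moment range of Assumption~\ref{a22}-(i), not only $p\le 2q$. Your remark that the bound is trivial when $\delta_p(x_*)=+\infty$ already covers this.
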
	

The following lemma establishes an upper bound on the second order moment of the martingale difference sequences $\{\hat{\epsilon}_1^k\}$ and $\{\hat{\epsilon}_2^k\}$. While the prior works \cite{iusem2017extragradient,iusem2019variance} have proved similar bounds using the Burkholder-Davis-Gundy inequality, we will present a more direct characterization.

For any $x^*\in{\rm SOL}(X, H)$, we define 
\begin{equation}\label{dwa2} D_{k,p}(x^*):=\gamma\frac{1}{\sqrt{S_k}}B_p\delta_p(x^*)\quad \text{and}\quad I_{k,p}(x^*):=1+\gamma \bar{L}+D_{k,p}(x^*),
\end{equation}
where $B_p$, $\delta_p(x_*)$, and $\bar{L}$ are as defined in Lemma \ref{le21}, Lemma \ref{QQ3}, and Remark \ref{lip}, respectively.
%	\vspace{2em}
\begin{lemma}\label{QQ4}
	Suppose that Assumptions \ref{a22} and \ref{a3} hold. Then for any $k$, $x^*\in{\rm SOL}(X, H)$, $p=2q$ and $q\geq1$, it holds that
	\begin{enumerate}[{\rm (i)}]
		\item
		$\left(\mathbb{E}\left[\left\|\hat{\epsilon}_1^k\right\|^{2q}|\mathcal{F}_k\right]\right)^{\frac{1}{q}}\leq 2B_p^2 \dfrac{\delta_p(x^*)^2\left(1+\|x_k-x^*\|^2\right)}{S_k};$
		
		\
		
		\item $	\left(\mathbb{E}\left[\left\|\hat{\epsilon}_2^k\right\|^{2q}|\mathcal{F}_k\right]\right)^{\frac{1}{q}}\leq  \dfrac{2}{S_k}B_p^2\delta_p(x^*)^2\left(1+2I_{k,p}(x^*)^2\|x_k-x^*\|^2+2D_{k,p}(x^*)^2\right);$
		
		\
		
		\item $ \left(\mathbb{E}\left[\left|\left\langle x^*-y_{k+0.5}, t_k\hat{\epsilon}_2^k\right\rangle\right|^{q}|\mathcal{F}_k\right]\right)^{\frac{1}{q}}
		\leq  \left(\frac{1}{2} D_{k,p}(x^*)+2 D_{k,p}(x^*) I_{k,p}(x^*)^2\right)\|x_k-x^*\|^2
		\\ +\frac{1}{2} D_{k,p}(x^*) 
		I_{k,p}(x^*)^2
		+ D_{k,p}(x^*)^2+2 D_{k,p}(x^*)^3.$
	\end{enumerate}	
\end{lemma}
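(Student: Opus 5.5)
The plan is to apply Lemma \ref{QQ3} conditionally on the appropriate $\sigma$-algebra, then control the randomness of $y_{k+0.5}$ through its distance to $x_k$.

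\textbf{Part (i).} Since $x_k\in\mathcal{F}_k$ and $\xi^k$ is independent of $\mathcal{F}_k$ (Assumption \ref{a3}), I apply Lemma \ref{QQ3} with $x=x_k$ frozen and $x_*=x^*$. With $p=2q$ this gives
\[
\left(\mathbb{E}[\|\hat{\epsilon}_1^k\|^{2q}|\mathcal{F}_k]\right)^{\frac{1}{q}}\leq B_p^2\delta_p(x^*)^2\frac{(1+\|x_k-x^*\|)^2}{S_k}.
\]
The bound $(1+a)^2\le 2(1+a^2)$ then yields (i).

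\textbf{Part (ii).} I condition first on $\hat{\mathcal{F}}_k$. Since $y_{k+0.5}\in\hat{\mathcal{F}}_k$ and $\eta^k$ is independent of $\hat{\mathcal{F}}_k$, Lemma \ref{QQ3} gives
\[
\mathbb{E}[\|\hat{\epsilon}_2^k\|^{2q}|\hat{\mathcal{F}}_k]\leq \left(\frac{B_p^2\delta_p(x^*)^2}{S_k}\right)^q (1+\|y_{k+0.5}-x^*\|)^{2q}.
\]
Taking $\mathbb{E}[\cdot|\mathcal{F}_k]$ reduces the problem to bounding the conditional $L^{2q}$-norm of $\|y_{k+0.5}-x^*\|$. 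For this I use $x^*=P_X(x^*-t_kH(x^*))$ (Lemma \ref{equ}) together with nonexpansiveness (Lemma \ref{lee}-(i)):
\[
\|y_{k+0.5}-x^*\|\le \|x_k-x^*\|+t_k\|H(x_k)-H(x^*)\|+t_k\|\hat{\epsilon}_1^k\|\le (1+\gamma\bar L)\|x_k-x^*\|+\gamma\|\hat{\epsilon}_1^k\|,
\]
using Remark \ref{lip}-(i) and $t_k\le\gamma$. Minkowski's inequality in conditional $L^{2q}$ combined with (i) in its unsquared form, $\gamma\|\hat{\epsilon}_1^k|\mathcal{F}_k\|_{2q}\le D_{k,p}(x^*)(1+\|x_k-x^*\|)$, gives
\[
\|\,\|y_{k+0.5}-x^*\|\,|\mathcal{F}_k\|_{2q}\le I_{k,p}(x^*)\|x_k-x^*\|+D_{k,p}(x^*).
\]
Hence $\|1+\|y_{k+0.5}-x^*\|\,|\mathcal{F}_k\|_{2q}\le 1+I\|x_k-x^*\|+D$. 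Squaring with $(a+b+c)^2$-type splits, arranged as $(1+(I\|x_k-x^*\|+D))^2\le 2(1+(I\|x_k-x^*\|+D)^2)\le 2(1+2I^2\|x_k-x^*\|^2+2D^2)$, gives (ii).

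\textbf{Part (iii).} I condition again on $\hat{\mathcal{F}}_k$ and apply the second inequality of Lemma \ref{QQ3} with $v=t_k(x^*-y_{k+0.5})$, which is $\hat{\mathcal{F}}_k$-measurable because $t_k$ is determined by $\xi^k$ and $x_k$. Using $t_k\le\gamma$, this yields
\[
\left(\mathbb{E}[|\langle x^*-y_{k+0.5},t_k\hat{\epsilon}_2^k\rangle|^q|\hat{\mathcal{F}}_k]\right)^{\frac{1}{q}}\le D_{k,q}\|y_{k+0.5}-x^*\|(1+\|y_{k+0.5}-x^*\|).
\]
Monotonicity of $L^p$ norms gives $B_q\delta_q\le B_p\delta_p$ for $p=2q$, provided $B_p$ is chosen nondecreasing; I will take that as given, and otherwise work directly with $p$. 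Writing $a\le \tfrac12+\tfrac12 a^2$, the right side is at most $D(\tfrac12+\tfrac32\|y_{k+0.5}-x^*\|^2)$. I then take $\mathbb{E}[\cdot|\mathcal{F}_k]$ and bound the conditional second moment by the square of the $L^{2q}$ bound from (ii), namely $2I^2\|x_k-x^*\|^2+2D^2$.

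This gives terms of the form $\tfrac12 D+cDI^2\|x_k-x^*\|^2+cD^3$. To match exactly the stated coefficients $\left(\tfrac12 D+2DI^2\right)\|x_k-x^*\|^2+\tfrac12 DI^2+D^2+2D^3$, I will need a slightly different splitting of the Young inequalities. The candidate is $\|y_{k+0.5}-x^*\|\le \tfrac12(1+\|y_{k+0.5}-x^*\|^2)$ applied to part of the term, together with cross-term bookkeeping. I expect this constant-matching, together with the legitimacy of moving the outer $1/q$-power through the nested conditional expectations (via Jensen and Minkowski for conditional norms), to be the main obstacle; the probabilistic structure itself is routine.
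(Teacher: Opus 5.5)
\textbf{Parts (i) and (ii) are correct; part (iii) has a gap.} Parts (i) and (ii) follow the paper's argument essentially verbatim: conditional application of Lemma~\ref{QQ3} at $x_k$, and at $y_{k+0.5}$ given $\hat{\mathcal{F}}_k$; the bound $\|y_{k+0.5}-x^*\|\le(1+\gamma\bar L)\|x_k-x^*\|+\gamma\|\hat{\epsilon}_1^k\|$; conditional Minkowski; and $(1+a)^2\le 2(1+a^2)$.

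The gap in (iii) is the constant-matching step you flag, and your proposed fix targets the wrong quantity. Write $a:=\|y_{k+0.5}-x^*\|$, $d:=\|x_k-x^*\|$, $D:=D_{k,p}(x^*)$, $I:=I_{k,p}(x^*)$. Applying $a\le\frac12+\frac12a^2$ pointwise, before conditioning on $\mathcal{F}_k$, gives $\frac12D+3DI^2d^2+3D^3$. Since $I\ge1$, the coefficient $3DI^2$ exceeds the claimed $\frac12D+2DI^2$. So this bound does not imply (iii) for large $d$.

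Reweighting Young as $a\le\frac{\epsilon}{2}+\frac{1}{2\epsilon}a^2$ with a fixed $\epsilon$ does not repair it. Lowering the $d^2$-coefficient to $\frac12D+2DI^2$ requires $\epsilon\ge 2I^2$. That forces the constant term up to at least $DI^2+2D^3$. This exceeds the claimed $\frac12DI^2+D^2+2D^3$, because $I^2\ge(1+D)^2>2D$. Young splits on the random quantity $a$ only trade one excess for the other, so the ``cross-term bookkeeping'' you propose will not close the gap.

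\textbf{The missing idea:} keep the linear term linear, and apply Young only to an $\mathcal{F}_k$-measurable product. Start from the $\hat{\mathcal{F}}_k$-conditional bound $Da+Da^2$. Obtain it with exponent $p=2q$, since Lemma~\ref{QQ3} needs exponent $\ge2$ and monotonicity of $B_p$ is not given. Then pass to exponent $q$ via the tower property and the conditional Lyapunov inequality $\left(\mathbb{E}[|\cdot|^q|\hat{\mathcal{F}}_k]\right)^{1/q}\le\left(\mathbb{E}[|\cdot|^p|\hat{\mathcal{F}}_k]\right)^{1/p}$; this nesting is routine, not an obstacle. Next use Minkowski in the conditional $L^q$ norm:
\[
\left(\mathbb{E}\left[\left|\left\langle x^*-y_{k+0.5},t_k\hat{\epsilon}_2^k\right\rangle\right|^q\Big|\mathcal{F}_k\right]\right)^{\frac1q}\le D\left(\mathbb{E}[a^p|\mathcal{F}_k]\right)^{\frac1p}+D\left(\mathbb{E}[a^p|\mathcal{F}_k]\right)^{\frac2p}\le D(Id+D)+D\left(2I^2d^2+2D^2\right).
\]
Both bounds on $a$ are the ones you already derived in (ii). Only now apply $Id\le\frac12I^2+\frac12d^2$. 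This produces exactly
\[
\left(\tfrac12D+2DI^2\right)d^2+\tfrac12DI^2+D^2+2D^3,
\]
which is the stated bound.
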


{\it \textbf{Proof}}\quad  We will prove the above three statements separately.
\begin{enumerate}[{\rm (i)}]
	\item Using Lemma \ref{QQ3}, $x_k\in\mathcal{F}_k$, and the independence of $\xi^{k}$ with $\mathcal{F}_k$, we get 
	\begin{equation}\label{QQ2}
		\left(\mathbb{E}\left[\left\|\hat{\epsilon}_1^k\right\|^{p}|\mathcal{F}_k\right]\right)^{\frac{1}{p}}
		=	\left(\mathbb{E}\left[\left\|\hat{\epsilon}_1^k\right\|^{p}\right]\right)^{\frac{1}{p}}\leq B_p \frac{\delta_p(x^*)\left(1+\|x_k-x^*\|\right)}{\sqrt{S_k}},
	\end{equation}
	which implies
	\begin{equation*}
		\begin{split}
			\left(\mathbb{E}\left[\left\|\hat{\epsilon}_1^k\right\|^{2q}|\mathcal{F}_k\right]\right)^{\frac{1}{q}}
			&=	\left(\mathbb{E}\left[\left\|\hat{\epsilon}_1^k\right\|^{p}|\mathcal{F}_k\right]\right)^{\frac{2}{p}}
			\\ &=\left(\mathbb{E}\left[\left\|\hat{\epsilon}_1^k\right\|^{p}\right]\right)^{\frac{2}{p}} \leq 2B_p^2 \frac{\delta_p(x^*)^2\left(1+\|x_k-x^*\|^2\right)}{S_k}.
		\end{split}
	\end{equation*}
	\item Recall that $y_{k+0.5}:=P_{X}\left(x_k-t_k\left(H(x_k)+\hat{\epsilon}_1^k\right)\right)$ in \eqref{redefine}. By Lemma \ref{equ} and $x^*\in{\rm SOL}(X, H)$, we have $x^*=P_{X}\left(x^*-t_kH(x^*)\right)$. 
	%From Jensen's Inequality and Assumption \ref{a2}, for any $x\in X$, we find $$\|H(x)\|=\|\mathbb{E}[T(\xi,x)]\|\leq\mathbb{E}[\|T(\xi,x)\|]\leq \hat{M}.$$ 
	Then, combining Lemma \ref{lee}-\eqref{lee0} with Remark \ref{lip}-\eqref{lip1} and $t_k\leq\gamma$, we conclude that
	\begin{equation}\label{QQ1}
		\begin{split}
			\|x^*-y_{k+0.5}\|=&\left\|P_{X}(x^*-t_kH(x^*))-P_{X}\left(x_k-t_k(H(x_k)+\hat{\epsilon}_1^k)\right)\right\|
			\\ \leq &  \|x_k-x^*\|+ t_k\|H(x_k)-H(x^*)\|+t_k\left\|\hat{\epsilon}_1^k\right\|
			\\ \leq &  \|x_k-x^*\| + \gamma \bar{L}\|x_k-x^*\| +\gamma\left\|\hat{\epsilon}_1^k\right\|,
		\end{split}
	\end{equation}
	where $\bar{L}=\mathbb{E}[L(\xi)]$. Taking $\left(\mathbb{E}[\|\cdot\|^p|\mathcal{F}_k]\right)^{\frac{1}{p}}$ in \eqref{QQ1} and using Minkowski's inequality, $x_k\in\mathcal{F}_k$, and \eqref{QQ2}, we deduce that
	\begin{equation}\label{yui}
		\begin{split}
			&       \left(\mathbb{E}\left[\|x^*-y_{k+0.5}\|^p|\mathcal{F}_k\right]\right)^\frac{1}{p}
			\\ \leq & \left(\mathbb{E}\left[\left(\|x_k-x^*\| + \gamma \bar{L}\|x_k-x^*\| +\gamma\left\|\hat{\epsilon}_1^k\right\|\right)^p|\mathcal{F}_k\right]\right)^\frac{1}{p}
			\\ \leq & (1+\gamma \bar{L})\|x_k-x^*\| +\gamma
			\left(\mathbb{E}\left[\left\|\hat{\epsilon}_1^k\right\|^p|\mathcal{F}_k\right]\right)^{\frac{1}{p}}
			\\ \leq & (1+\gamma \bar{L})\|x_k-x^*\| +\gamma B_p\frac{\delta_p(x^*)\left(1+\|x_k-x^*\|\right)}{\sqrt{S_k}}
			\\ = & \left(1+\gamma \bar{L}+\gamma\frac{1}{\sqrt{S_k}}B_p\delta_p(x^*)\right)\|x_k-x^*\|+\gamma\frac{1}{\sqrt{S_k}}B_p\delta_p(x^*)
			\\ = & I_{k,p}(x^*) \|x_k-x^*\|+D_{k,p}(x^*).
		\end{split}
	\end{equation}
	Then, we have
	\begin{equation}\label{QQ41}
		\begin{split}
			\left(\mathbb{E}\left[\|x^*-y_{k+0.5}\|^p|\mathcal{F}_k\right]\right)^\frac{2}{p}\leq  2I_{k,p}(x^*)^2\|x_k-x^*\|^2+2D_{k,p}(x^*)^2.
		\end{split}
	\end{equation}
	Since $\eta^{k}$ is independent of $\hat{\mathcal{F}}_k$ and $y_{k+0.5}\in\hat{\mathcal{F}}_k$, Lemma \ref{QQ3} implies that
	\begin{equation}\label{xw1}
		\left(\mathbb{E}\left[\left\|\hat{\epsilon}_2^k\right\|^p|\hat{\mathcal{F}}_k\right]\right)^{\frac{1}{p}}
		=	\left(\mathbb{E}\left[\left\|\hat{\epsilon}_2^k\right\|^p\right]\right)^{\frac{1}{p}}\leq B_p \dfrac{\delta_p(x^*)\left(1+\|y_{k+0.5}-x^*\|\right)}{\sqrt{S_k}}.
	\end{equation}
	%	$|\cdot|\mathcal{\hat{F}}_k|_q\leq|\cdot|\mathcal{\hat{F}}_k|_p$, $||\cdot|\mathcal{\hat{F}}_k|_q|\mathcal{F}_k|_q$$=|\cdot|\mathcal{F}_k|_q$, 
	Combining with Minkowski's inequality, \eqref{QQ41}, \eqref{xw1}, $\left(\mathbb{E}[\|\cdot\|^q|\mathcal{\hat{F}}_k]\right)^{\frac{1}{q}}\leq\left(\mathbb{E}[\|\cdot\|^p|\mathcal{\hat{F}}_k]\right)^{\frac{1}{p}}$ and
	$\left(\mathbb{E}\left[\left(\mathbb{E}[\|\cdot\|^q|\mathcal{\hat{F}}_k]\right)|\mathcal{F}_k\right]\right)^{\frac{1}{q}}=\left(\mathbb{E}[\|\cdot\|^q|\mathcal{F}_k]\right)^{\frac{1}{q}},$ we deduce that
	\begin{align*}
		\left(\mathbb{E}\left[\left\|\hat{\epsilon}_2^k\right\|^{2q}|\mathcal{F}_k\right]\right)^{\frac{1}{q}}=~&
		\left(\mathbb{E}\left[\left\|\hat{\epsilon}_2^k\right\|^{p}|\mathcal{F}_k\right]\right)^{\frac{2}{p}}	=\left(\mathbb{E}\left[\left(\mathbb{E}\left[\left\|\hat{\epsilon}_2^k\right\|^p|\mathcal{\hat{F}}_k\right]\right)|\mathcal{F}_k\right]\right)^{\frac{2}{p}}
		\\ \overset{\eqref{xw1}}\leq & \left(\mathbb{E}\left[\left(\frac{1}{\sqrt{S_k}}B_p\delta_p(x^*)\left(1+\|y_{k+0.5}-x^*\|\right)\right)^p|\mathcal{F}_k\right]^{\frac{1}{p}}\right)^2
		\\ \leq ~& \left(\frac{1}{\sqrt{S_k}}B_p\delta_p(x^*)+\frac{1}{\sqrt{S_k}}B_p\delta_p(x^*)\mathbb{E}\left[\|y_{k+0.5}-x^*\|^p|\mathcal{F}_k\right]^{\frac{1}{p}}\right)^2
		\\ \leq ~& \frac{2}{S_k}B_p^2\delta_p(x^*)^2\left(1+\left(\mathbb{E}\left[\|x^*-y_{k+0.5}\|^p|\mathcal{F}_k\right]\right)^\frac{2}{p}\right)
		\\ \overset{\eqref{QQ41}}\leq & \frac{2}{S_k}B_p^2\delta_p(x^*)^2\left(1+2I_{k,p}(x^*)^2\|x_k-x^*\|^2+2D_{k,p}(x^*)^2\right).
	\end{align*}
	\item By Lemma \ref{QQ3}, $y_{k+0.5}\in\hat{\mathcal{F}}_k$, \eqref{xw1} and $t_k\leq\gamma$, we deduce that
	\begin{equation}\label{QQ5}
		\begin{split}
			&	\left(\mathbb{E}\left[\left|\langle x^*-y_{k+0.5}, t_k\hat{\epsilon}_2^k\rangle\right|^p|\hat{\mathcal{F}}_k\right]\right)^{\frac{1}{p}}
			\\ \leq ~& \gamma\|x^*-y_{k+0.5}\|
			\left(\mathbb{E}\left[\left\|\hat{\epsilon}_2^k\right\|^p|\hat{\mathcal{F}}_k\right]\right)^{\frac{1}{p}}
			\\ \overset{\eqref{xw1}}\leq & \gamma\|x^*-y_{k+0.5}\|B_p \frac{\delta_p(x^*)\left(1+\|y_{k+0.5}-x^*\|\right)}{\sqrt{S_k}}
			\\ = ~& D_{k,p}(x^*)\|x^*-y_{k+0.5}\|+D_{k,p}(x^*)\|x^*-y_{k+0.5}\|^2.
		\end{split}
	\end{equation}
	Taking $\left(\mathbb{E}[\|\cdot\|^q|\mathcal{F}_k]\right)^{\frac{1}{q}}$ in \eqref{QQ5} and using $\left(\mathbb{E}\left[\left(\mathbb{E}[\|\cdot\|^q|\mathcal{\hat{F}}_k]\right)|\mathcal{F}_k\right]\right)^{\frac{1}{q}}=\left(\mathbb{E}[\|\cdot\|^q|\mathcal{F}_k]\right)^{\frac{1}{q}}$,
	$\left(\mathbb{E}[\|\cdot\|^q|\mathcal{\hat{F}}_k]\right)^{\frac{1}{q}}\leq\left(\mathbb{E}[\|\cdot\|^p|\mathcal{\hat{F}}_k]\right)^{\frac{1}{p}}$, we deduce that
	\begin{equation}\label{hu}
		\begin{split}
			&	\left(\mathbb{E}\left[\left(\mathbb{E}\left[\left|\langle x^*-y_{k+0.5}, t_k\hat{\epsilon}_2^k\rangle\right|^p|\mathcal{\hat{F}}_k\right]\right)^{\frac{q}{p}}|\mathcal{F}_k\right]\right)^{\frac{1}{q}}
			\\	  \geq & \left(\mathbb{E}\left[\left(\mathbb{E}\left[\left|\langle x^*-y_{k+0.5}, t_k\hat{\epsilon}_2^k\rangle\right|^q|\mathcal{\hat{F}}_k\right]\right)|\mathcal{F}_k\right]\right)^{\frac{1}{q}}
			\\  = & \left(\mathbb{E}\left[\left|\langle x^*-y_{k+0.5}, t_k\hat{\epsilon}_2^k\rangle\right|^q|\mathcal{F}_k\right]\right)^{\frac{1}{q}}.
		\end{split}
	\end{equation}
	Once again, using the properties
	$$\left(\mathbb{E}\left[\left(\mathbb{E}[\|\cdot\|^q|\mathcal{\hat{F}}_k]\right)|\mathcal{F}_k\right]\right)^{\frac{1}{q}}=\left(\mathbb{E}[\|\cdot\|^q|\mathcal{F}_k]\right)^{\frac{1}{q}},  \left(\mathbb{E}[\|\cdot\|^q|\mathcal{\hat{F}}_k]\right)^{\frac{1}{q}} \leq\left(\mathbb{E}[\|\cdot\|^p|\mathcal{\hat{F}}_k]\right)^{\frac{1}{p}},$$ and combining them with \eqref{yui}, \eqref{QQ41}, \eqref{QQ5}, \eqref{hu} and Minkowski's inequality, we deduce that
	\begin{align*}
		&	 \left(\mathbb{E}\left[\left|\langle x^*-y_{k+0.5}, t_k\hat{\epsilon}_2^k\rangle\right|^q|\mathcal{F}_k\right]\right)^{\frac{1}{q}} 
		\\	\overset{\eqref{hu}}\leq \quad&
		\left(\mathbb{E}\left[\left(\mathbb{E}\left[\left|\langle x^*-y_{k+0.5}, t_k\hat{\epsilon}_2^k\rangle\right|^p|\mathcal{\hat{F}}_k\right]\right)^{\frac{q}{p}}|\mathcal{F}_k\right]\right)^{\frac{1}{q}}
		\\ \overset{\eqref{QQ5}} \leq \quad&
		\left(\mathbb{E}\left[\left(	D_{k,p}(x^*)\|x^*-y_{k+0.5}\|+D_{k,p}(x^*)\|x^*-y_{k+0.5}\|^2\right)^q|\mathcal{F}_k\right]\right)^{\frac{1}{q}}
		\\  \leq ~\quad &D_{k,p}(x^*) \left(\mathbb{E}\left[\|x^*-y_{k+0.5}\|^q|\mathcal{F}_k\right]\right)^{\frac{1}{q}}
		+D_{k,p}(x^*)\left(\mathbb{E}\left[\|x^*-y_{k+0.5}\|^{2q}|\mathcal{F}_k\right]\right)^{\frac{1}{q}}
		\\ \leq ~\quad & D_{k,p}(x^*)\left(\mathbb{E}\left[\|x^*-y_{k+0.5}\|^p|\mathcal{F}_k\right]\right)^{\frac{1}{p}}+D_{k,p}(x^*)\left(\mathbb{E}\left[\|x^*-y_{k+0.5}\|^{p}|\mathcal{F}_k\right]\right)^{\frac{2}{p}}
		\\ \overset{\eqref{yui},\eqref{QQ41}}\leq ~&D_{k,p}(x^*)\left( I_{k,p}(x^*)\|x_k-x^*\|+D_{k,p}(x^*)\right)
		\\ &+D_{k,p}(x^*)\left(2I_{k,p}(x^*)^2\|x_k-x^*\|^2+2D_{k,p}(x^*)^2\right)
		\\ \leq ~\quad & D_{k,p}(x^*)\left( \frac{1}{2}I_{k,p}(x^*)^2+\frac{1}{2}\|x_k-x^*\|^2+D_{k,p}(x^*)\right)
		\\ &+D_{k,p}(x^*)\left(2I_{k,p}(x^*)^2\|x_k-x^*\|^2+2D_{k,p}(x^*)^2\right)
		\\ = ~\quad & \left(\frac{1}{2} D_{k,p}(x^*)+2 D_{k,p}(x^*) I_{k,p}(x^*)^2\right)\|x_k-x^*\|^2
		\\ &+\frac{1}{2} D_{k,p}(x^*) I_{k,p}(x^*)^2+ D_{k,p}(x^*)^2+2 D_{k,p}(x^*)^3.
	\end{align*}
\end{enumerate}	
\qed

The bound for the second-order moment of the oracle error sequence $\{\hat{\epsilon}_3^k\}$, which is not a martingale difference sequence, can be found in \cite{iusem2019variance}. We present this result as the following lemma.

\begin{lemma}\label{epsi3}
	\textcolor[rgb]{0.00,0.00,1.00}{{\rm\cite[Theorem 3.11]{iusem2019variance}}}
	Suppose that Assumption \ref{a22} holds. Let $\{\xi_j\}_{j=1}^N$ be an i.i.d. sample of $\xi$, and $t:\Xi\rightarrow[0, \gamma]$ be a random variable for some $\gamma\in(0,1]$. Let $z_N(x,t):=P_{X}\left(x-t\frac{1}{N}\sum_{j=1}^NT(\xi_j,x)\right)$. Then for any $p\geq2$, $x\in\mathbb{R}^n$ and $x^*\in{\rm SOL}(X, H)$, there exist positive constants $\{c_i\}_{i=1}^4$ 
	%	(depending on $n, p$ and $\bar{L}_{2p}\gamma$) 
	such that,
	$$ \left(\mathbb{E}\left[\left\|\frac{1}{N}\sum_{j=1}^N\epsilon\left(\xi_j, z_N(x, t)\right)\right\|^p\right]\right)^{\frac{1}{p}}
	\leq\dfrac{c_1\left(\mathbb{E}\left[\|\epsilon(\xi,x^*)\|^{2p}\right]\right)^{\frac{1}{2p}}+\hat{L}_{2p}\|x-x^*\|}{\sqrt{N}},$$
	where $\hat{L}_{2p}:=c_2\bar{L}_2+c_3\bar{L}_p+c_4\bar{L}_{2p}$, with $\bar{L}_p$ defined in Remark \ref{lip}.
\end{lemma}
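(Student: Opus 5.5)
The bound for $\hat{\epsilon}_3^k$ cannot be treated like $\hat{\epsilon}_1^k,\hat{\epsilon}_2^k$. The point $z_N(x,t)$ depends on the same sample $\{\xi_j\}$ that appears in the empirical mean, so the summands $\epsilon(\xi_j,z_N(x,t))$ are not independent and do not form a martingale difference sequence. My plan is to decouple this dependence by comparing with the fixed point $x^*$ (or with $x$) and to control the resulting correction by Lipschitz-type bounds whose random constants have good moments.

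First I split
$$\frac{1}{N}\sum_{j=1}^N\epsilon(\xi_j,z_N)=\frac{1}{N}\sum_{j=1}^N\epsilon(\xi_j,x^*)+\frac{1}{N}\sum_{j=1}^N\big[\epsilon(\xi_j,z_N)-\epsilon(\xi_j,x^*)\big].$$
The first term is an average of i.i.d. mean-zero vectors at a deterministic point. Lemma \ref{QQ3} with $x=x_*=x^*$, or Lemma \ref{le21} directly, bounds its $L^p$ norm by $B_p(\mathbb{E}\|\epsilon(\xi,x^*)\|^p)^{1/p}/\sqrt{N}$, which is dominated by the $2p$-moment term. The second term cannot be bounded by $\frac1N\sum_j(L(\xi_j)+\bar L)\|z_N-x^*\|$, because that gives no $1/\sqrt N$ factor.

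For the second term I therefore use a uniform (empirical-process) estimate over a neighborhood. The increment process $y\mapsto \frac1N\sum_j[\epsilon(\xi_j,y)-\epsilon(\xi_j,x^*)]$ has i.i.d. mean-zero summands for each fixed $y$. Each summand is Lipschitz in $y$ with constant $L(\xi_j)+\bar L$, so it is a sub-Gaussian-type process in $y$ with increments of size $(L+\bar L)\|y-y'\|/\sqrt N$. A chaining argument (Dudley-type, or the simpler net/covering argument on balls) gives
$$\Big\|\sup_{\|y-x^*\|\le R}\Big\|\tfrac1N\textstyle\sum_j[\epsilon(\xi_j,y)-\epsilon(\xi_j,x^*)]\Big\|\Big\|_{p}\lesssim \frac{C(n,p)\,\bar L_{p}\,R}{\sqrt N},$$
and the dimension $n$ enters only through the constants $c_i$. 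Since $R=\|z_N-x^*\|$ is random, I peel over dyadic shells $R\in[2^m,2^{m+1})$, or equivalently normalize by $(1+\|y-x^*\|)$. This yields a bound of the form $\frac{C}{\sqrt N}(1+\|z_N-x^*\|)$ times $L^{2p}$-type quantities. Hölder's inequality then splits the product $\|A\cdot B\|_p\le \|A\|_{2p}\|B\|_{2p}$, which is why the $2p$-moments $\bar L_{2p}$ and $(\mathbb{E}\|\epsilon(\xi,x^*)\|^{2p})^{1/(2p)}$ appear.

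It then remains to bound $\|\,\|z_N-x^*\|\,\|_{2p}$. Exactly as in \eqref{QQ1}, nonexpansiveness of $P_X$, $x^*=P_X(x^*-tH(x^*))$ and $t\le\gamma\le1$ give
$$\|z_N-x^*\|\le (1+\bar L)\|x-x^*\|+\Big\|\tfrac1N\textstyle\sum_j\epsilon(\xi_j,x)\Big\|.$$
Lemma \ref{QQ3} bounds the last term in $L^{2p}$ by $B_{2p}\delta_{2p}(x^*)(1+\|x-x^*\|)/\sqrt N$, which is bounded since $N\ge1$. Collecting terms produces $c_1(\mathbb{E}\|\epsilon(\xi,x^*)\|^{2p})^{1/(2p)}+\hat L_{2p}\|x-x^*\|$ over $\sqrt N$. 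The terms $\bar L_2$, $\bar L_p$ and $\bar L_{2p}$ arise from $\bar L$, from the $p$-moment chaining bound and from the Hölder step, respectively.

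The main obstacle is the uniform concentration step. Obtaining the $1/\sqrt N$ rate for a sum evaluated at a sample-dependent point requires a supremum bound over a random-radius region with only polynomial moments, via chaining plus peeling with careful moment bookkeeping. I would follow the localization argument of \cite{iusem2019variance}. An alternative is to avoid chaining by conditioning on a leave-one-out point $z_N^{(-j)}$, independent of $\xi_j$, and bounding the perturbation $\|z_N-z_N^{(-j)}\|\le \gamma(L(\xi_j)\|x\|+\|T(\xi_j,x)\|)/N$. This gives a dimension-free version of the same estimate.
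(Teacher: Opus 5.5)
\emph{Verdict:} there is a genuine gap. Your architecture is the empirical-process route of the source the paper cites: split at $x^*$, bound $G(y):=\frac1N\sum_{j}[\epsilon(\xi_j,y)-\epsilon(\xi_j,x^*)]$ uniformly over a ball, and use H\"{o}lder to pass to $2p$-th moments. The step that handles the random radius, however, fails as written.

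\textbf{The random-radius step.} Normalizing by $1+\|y-x^*\|$ and then applying H\"{o}lder requires $\sup_y\|G(y)\|/(1+\|y-x^*\|)$ to be $O(N^{-1/2})$ in $L^{2p}$. This is false under Assumption \ref{a22}. Take $n=1$, $X=\mathbb{R}$, and $\xi$ uniform on $[0,1]$ with Rademacher digits $r_m(\xi)$. Let $T(\xi,\cdot)$ vanish on $(-\infty,0]$, equal $y\,r_0(\xi)$ on $[0,1]$, and interpolate linearly between the values $T(\xi,2^m)=2^m r_m(\xi)$, $m\ge 0$. Then $L\equiv 3$, $H\equiv 0$, and $x^*=0$ is a solution. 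As $m$ varies, the sign vectors $(r_m(\xi_j))_{j\le N}$ are i.i.d. uniform on $\{\pm1\}^N$. Hence a.s. some $m$ has all signs equal to $+1$, and the normalized supremum is at least $\frac12$ for every $N$. Peeling over dyadic shells without using the law of $\|z_N-x^*\|$ amounts to the same supremum and fails for the same reason.

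\textbf{The additive constant.} Separately, the $1$ in $1+\|y-x^*\|$ leaves additive terms of order $N^{-1/2}$. So does your use of Lemma \ref{QQ3} in the form $\delta_{2p}(x^*)(1+\|x-x^*\|)$, since $\delta_{2p}(x^*)\ge\bar L_{2p}$. These terms are proportional neither to $(\mathbb{E}\|\epsilon(\xi,x^*)\|^{2p})^{1/(2p)}$ nor to $\|x-x^*\|$. So ``collecting terms'' cannot give the stated inequality, which has no such term: both sides vanish when $x=x^*$ and $\epsilon(\xi,x^*)=0$ a.s.

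\textbf{The fix: keep the radius deterministic.} Put $\bar z:=P_X(x-tH(x))$ and $\hat e_N(y):=\frac1N\sum_j\epsilon(\xi_j,y)$. Since $x^*=P_X(x^*-tH(x^*))$, nonexpansiveness gives $\|\bar z-x^*\|\le(1+\gamma\bar L)\|x-x^*\|=:\bar R$, a deterministic radius valid for every value of $t$. It also gives $\|z_N(x,t)-\bar z\|\le\gamma\|\hat e_N(x)\|$. That discrepancy is already of order $N^{-1/2}$, so the crude Lipschitz constant $\tilde L_N+\bar L$ of $\hat e_N$, with $\tilde L_N:=\frac1N\sum_jL(\xi_j)$, suffices:
\[
\|\hat e_N(z_N(x,t))\|\le\|\hat e_N(x^*)\|+\sup_{\|y-x^*\|\le\bar R}\|G(y)\|+\gamma(\tilde L_N+\bar L)\|\hat e_N(x)\|.
\]
For the last term, combine H\"{o}lder, $(\mathbb{E}[(\tilde L_N+\bar L)^{2p}])^{1/(2p)}\le\bar L_{2p}$, and the homogeneous bound
$(\mathbb{E}\|\hat e_N(x)\|^{2p})^{1/(2p)}\le B_{2p}\big((\mathbb{E}\|\epsilon(\xi,x^*)\|^{2p})^{1/(2p)}+\bar L_{2p}\|x-x^*\|\big)/\sqrt N$ (Lemma \ref{le21} with Remark \ref{lip}). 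This produces the $c_1$ and $c_4\bar L_{2p}$ terms with no additive constant.

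\textbf{Where the chaining is still needed.} Your chaining is then required only for the supremum at the fixed radius $\bar R$. There it must go through symmetrization and Dudley's bound applied conditionally on the sample. Unconditionally the process is not sub-Gaussian, because $L(\xi)$ has only polynomial moments. A single net with an $L^p$ union bound also fails, losing a factor of order $N^{n/(2p)}$.

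\textbf{The leave-one-out alternative.} This does not work as stated. The step $t$ is an arbitrary function of the whole sample; in the application it is the line-search step $t_k$, which jumps when one sample changes. So $z_N^{(-j)}$ built with the same $t$ is not independent of $\xi_j$, and with a recomputed $t$ the perturbation is not $O(1/N)$. The lemma needs uniformity over $t\in[0,\gamma]$, which the deterministic ball provides.
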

\begin{remark}
	In \cite{iusem2019variance} , it is stated that the constants in Lemma \ref{epsi3} satisfy
	$$c_1:=2B_p+B_{2p}B_{\bar{L}\gamma,p},  \quad c_2:=\mathcal{O}\left(\left(\frac{3\sqrt{n}}{\sqrt{2}-1}+\sqrt{p}\right)B_{\bar{L}\gamma,p}\right),$$
	$$c_3:=\mathcal{O}(pB_{\bar{L}\gamma,p}), \quad c_4:=B_{2p}B_{\bar{L}\gamma,p},$$
	where $B_{\bar{L}\gamma,p} := 1 + 2\bar{L}\gamma +|L(\xi)|_{2p}\gamma$,
	with $\bar{L}$ defined in Remark~\ref{lip}.
	The constants $B_p$ and $B_{2p}$ are defined as in Lemma~\ref{QQ3}.
\end{remark}
For any $x^*\in{\rm SOL}(X, H)$, we define
\begin{equation}\label{dwa1}
	D_{p}(x^*):=\gamma B_p\delta_p(x^*) \quad\text{and} \quad I_{p}(x^*):=1+\gamma \bar{L}+D_{p}(x^*).
\end{equation}
Then, combining \eqref{dwa2} and \eqref{dwa1}, we obtain $$D_{k,p}(x^*)=\frac{1}{\sqrt{S_k}}D_{p}(x^*) \quad \text{and}\quad I_{k,p}(x^*)=1+\gamma \bar{L}+\frac{1}{\sqrt{S_k}}D_{p}(x^*).$$
Moreover, we define
\begin{equation}\label{P}
\begin{split}
	P(x^*):=&2(1-3\mu^2)(M+1)D_{2}(x^*)^2+6(M+1)D_{2}(x^*)^2(1+2D_{2}(x^*)^2)
	\\ & +6\gamma^2(M+1)c_1^2\left(\mathbb{E}\left[\|\epsilon(\xi,x^*)\|^{4}\right]\right)^{\frac{1}{2}}
	\\ & +2(M+1)\left(\frac{1}{2}D_{2}(x^*)I_{2}(x^*)^2+D_{2}(x^*)^2+2D_{2}(x^*)^3\right)
\end{split}
\end{equation}
and
\begin{equation}\label{Q}
\begin{split} Q(x^*):=&2(1-3\mu^2)(M+1)D_{2}(x^*)^2+12(M+1)D_{2}(x^*)^2I_{2}(x^*)^2
	\\ &+6\gamma^2(M+1)\hat{L}_4^2+2(M+1)\left(\frac{1}{2}D_{2}(x^*)+2D_{2}(x^*)I_{2}(x^*)^2\right).
\end{split}
\end{equation}
\begin{lemma}\label{impo}
Suppose that Assumptions \ref{Minty}, \ref{a22} and \ref{a3} hold. Let $\{x_k\}$ be the sequence generated by the Anderson(1)-SEG algorithm, and define $\iota:=\frac{\left(\min\left\{\rho\mu,\gamma\right\}\right)^2}{\mathbb{E}[L(\xi)^2]}$, where $L(\xi)\geq1$ with probability 1.
Then, for any $\hat{\beta}\in(0,\frac{1}{2}]$ and $x^*\in{\rm SOL}(X, 
H)$, we have
\begin{equation}\label{ye}
	\begin{split}
		\mathbb{E}\left[\left\|x_{k+1}-x^*\right\|^2|\mathcal{F}_k\right]\leq & \left(1+\frac{1}{\sqrt{S_k}}Q(x^*)\right) \|x_k-x^*\|^2-\frac{(1-3\mu^2)\hat{\beta}\iota}{2\gamma^2}r_{\gamma}(x_{k})^2
		\\ &+\frac{1}{\sqrt{S_k}}P(x^*)+\varrho_{k},
	\end{split}
\end{equation}
where $\varrho_k$ is defined as in Lemma \ref{lem4.66}.
\end{lemma}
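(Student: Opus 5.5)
Starting from the pathwise recursion \eqref{lem4.6} of Lemma \ref{lem4.66}, I would take $\mathbb{E}[\cdot|\mathcal{F}_k]$ of both sides. Then I bound each stochastic term by a multiple of $\frac{1}{\sqrt{S_k}}$ times either $\|x_k-x^*\|^2$ or a constant, and collect the constants into $Q(x^*)$ and $P(x^*)$. Since $x_k\in\mathcal{F}_k$, the terms $\|x_k-x^*\|^2$ and $r_\gamma(x_k)^2$ come out of the conditional expectation. The term $\varrho_k$ is deterministic given the iteration type; strictly it depends on $\mathcal{F}_k$-measurable information, and I would treat it as bounded by its value, as the statement does.

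\textbf{Descent term.} Here $-\frac{(1-3\mu^2)\hat{\beta}}{2\gamma^2}r_\gamma(x_k)^2\,\mathbb{E}[t_k^2|\mathcal{F}_k]$ is handled by Lemma \ref{tab222}, which gives $\mathbb{E}[t_k^2|\mathcal{F}_k]\geq \iota$. Since $1-3\mu^2>0$, this yields the $\iota$ term.

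\textbf{Error terms, with $q=1$, $p=2$.} I use $S_k\geq 1$, so that $\frac{1}{S_k}\leq\frac{1}{\sqrt{S_k}}$ and $D_{k,2}=D_2/\sqrt{S_k}\leq D_2$, and also $I_{k,2}\leq I_2$.
\begin{enumerate}[{\rm (i)}]
\item Lemma \ref{QQ4}(i) gives $\mathbb{E}[\|\hat\epsilon_1^k\|^2|\mathcal{F}_k]\leq \frac{2}{S_k}B_2^2\delta_2^2(1+\|x_k-x^*\|^2)$. Because $\gamma\leq1$, $\gamma^2 B_2^2\delta_2^2=D_2^2$, so multiplying by $(1-3\mu^2)(M+1)\gamma^2$ produces the first summand of both $P$ and $Q$.
\item Lemma \ref{QQ4}(ii), multiplied by $3\gamma^2(M+1)$, gives $\frac{6(M+1)}{S_k}D_2^2(1+2I_2^2\|x_k-x^*\|^2+2D_2^2)$. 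This matches the $6(M+1)D_2^2(1+2D_2^2)$ part of $P$ and the $12(M+1)D_2^2I_2^2$ part of $Q$.
\item Lemma \ref{QQ4}(iii) with $q=1$, multiplied by $2(M+1)$, has every term carrying at least one factor $D_{k,2}=D_2/\sqrt{S_k}$. The remaining powers of $D_{k,2}$ are bounded by powers of $D_2$, which gives the last summands of $P$ and $Q$.
\end{enumerate}

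\textbf{Main obstacle: the $\hat\epsilon_3^k$ term.} Because $y_{k+0.5}$ depends on the same sample $\xi^k$, this is not a martingale difference, so Lemma \ref{epsi3} is needed. I would apply it conditionally on $\mathcal{F}_k$ with $N=S_k$, $x=x_k$ (fixed given $\mathcal{F}_k$), $p=2$ and $t=t_k\in(0,\gamma]$. Then $y_{k+0.5}=z_{S_k}(x_k,t_k)$, and $\hat\epsilon_3^k$ is exactly the averaged error at $z_N$.

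The subtlety is that $t_k$ is a function of the sample $\xi^k$ through the line search. Lemma \ref{epsi3} explicitly allows a random $t:\Xi\to[0,\gamma]$, and independence of $\xi^k$ from $\mathcal{F}_k$ justifies applying it conditionally. Squaring and using $(a+b)^2\leq 2a^2+2b^2$ gives
$$\mathbb{E}[\|\hat\epsilon_3^k\|^2|\mathcal{F}_k]\leq \frac{2c_1^2(\mathbb{E}\|\epsilon(\xi,x^*)\|^4)^{1/2}+2\hat L_4^2\|x_k-x^*\|^2}{S_k}.$$
Multiplying by $3\gamma^2(M+1)$ produces the $6\gamma^2(M+1)c_1^2(\cdot)^{1/2}$ term of $P$ and the $6\gamma^2(M+1)\hat L_4^2$ term of $Q$.

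Summing all contributions and replacing $\frac{1}{S_k}$ by $\frac{1}{\sqrt{S_k}}$ then yields \eqref{ye}.
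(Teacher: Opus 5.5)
Your proposal is correct and follows the paper's proof: you condition \eqref{lem4.6} on $\mathcal{F}_k$, use Lemma \ref{tab222} to get $\mathbb{E}[t_k^2|\mathcal{F}_k]\geq\iota$, and bound the error terms with Lemma \ref{QQ4} (taking $q=1$, $p=2$) and Lemma \ref{epsi3} (taking $p=2$, with $x_k$ frozen by the independence of $\xi^k$ from $\mathcal{F}_k$); your term-by-term bookkeeping correctly reproduces every summand of $P(x^*)$ and $Q(x^*)$. One correction to your remark on $\varrho_k$ (a looseness the paper shares): $\varrho_k$ is not $\mathcal{F}_k$-measurable, because whether $k\in K_{SAA}$ is decided by $\xi^k,\eta^k$, so strictly the last term should be $\mathbb{E}[\varrho_k|\mathcal{F}_k]$; this is harmless because $\sum_k\varrho_k\leq M(M+1)\nu^2\sum_i(1+i)^{-2\tau}<\infty$ pathwise.
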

	{\it \textbf{Proof}} \quad 
Taking $\mathbb{E}[\cdot|\mathcal{F}_k]$ in \eqref{lem4.6} and using the fact that $x_k\in\mathcal{F}_k$, we obtain
\begin{equation}\label{grapefruit}
	\begin{split}
		&	\mathbb{E}\left[\left\|x_{k+1}-x^*\right\|^2|\mathcal{F}_k\right]
		\\ \leq & \|x_k-x^*\|^2-\frac{(1-3\mu^2)\hat{\beta}}{2\gamma^2}r_{\gamma}(x_{k})^2\mathbb{E}\left[t_{k}^2|\mathcal{F}_k\right]
		\\ & +(1-3\mu^2)(M+1)\gamma^2\mathbb{E}\left[\left\|\hat{\epsilon}_1^{k}\right\|^2|\mathcal{F}_k\right]
		\\ & +3\gamma^2(M+1)\mathbb{E}\left[\left\|\hat{\epsilon}_2^{k}\right\|^2|\mathcal{F}_k\right]+3\gamma^2(M+1)\mathbb{E}\left[\left\|\hat{\epsilon}_3^{k}\right\|^2|\mathcal{F}_k\right]
		\\ & +2(M+1)\mathbb{E}\left[\left|\langle x^*-y_{k+0.5}, t_{k}\hat{\epsilon}_2^{k}\rangle\right||\mathcal{F}_k\right]+\varrho_{k}.
	\end{split}
\end{equation}
%\begin{enumerate}[{\rm (i)}]
%	\item From Lemma \ref{QQ4}, we have
%	\begin{equation}\label{tab1}
	%		\mathbb{E}[\|\hat{\epsilon}_1^k\|^2|\mathcal{F}_k]	\leq2B^2_2\delta_2(x^*)^2\frac{\left(1+\|x_k-x^*\|^2\right)}{S_k},
	%	\end{equation}
%	\begin{equation}\label{tab2}
	%		\mathbb{E}[\|\hat{\epsilon}_2^{k}\|^2|\mathcal{F}_k]\leq\frac{2}{S_k}B_2^2\delta_2(x^*)^2\left(1+2I_{k,2}(x^*)^2\|x_k-x^*\|^2+2D_{k,2}(x^*)^2\right)
	%	\end{equation}
%	and
%	\begin{equation}\label{tab3}
	%		\begin{split}
		%			&	\mathbb{E}\left[\left|\langle x^*-y_{k+0.5}, t_{k}\hat{\epsilon}_2^{k}\rangle\right||\mathcal{F}_k\right]
		%			\\ \leq & \left(\frac{1}{2} D_{k,2}(x^*)+2 D_{k,2}(x^*) I_{k,2}(x^*)^2\right)\|x_k-x^*\|^2
		%			\\ &+\frac{1}{2} D_{k,2}(x^*) I_{k,2}(x^*)^2+ D_{k,2}(x^*)^2+2D_{k,2}(x^*)^3.
		%		\end{split}
	%	\end{equation}
%	\item From Lemma \ref{epsi3} and the fact that $\xi^k$ is independent of $\mathcal{F}_k$, it follows that
%	\begin{equation}\label{tab4}
	%		\begin{split}
		%			\mathbb{E}[\|\hat{\epsilon}_3^{k}\|^2|\mathcal{F}_k]=&\mathbb{E}[\|\hat{\epsilon}(\xi^k, y(\xi^k, x_k, t_k))\|^2]
		%			\\ \leq & \frac{2c_1^2\left(\mathbb{E}\left[\|\epsilon(\xi, x^*)\|^{4}\right]\right)^{\frac{1}{2}}+2\hat{L}_4^2\|x_k-x^*\|^2}{S_k}.
		%		\end{split}
	%	\end{equation}
%	From Lemma \ref{tab222}, we get
%	\begin{equation}\label{tab5}
	%		\mathbb{E}[t_{k}^2|\mathcal{F}_k]\geq \frac{\left(\min\left\{\rho\mu,\gamma\right\}\right)^2}{\mathbb{E}[L(\xi)^2]}=\frac{\left(\min\left\{\rho\mu,\gamma\right\}\right)^2}{|L(\xi)|^2_2}=\iota.
	%	\end{equation}
%	\end{enumerate}	
From Lemma \ref{tab222}, we get
\begin{equation*}
\mathbb{E}[t_{k}^2|\mathcal{F}_k]\geq \frac{\left(\min\left\{\rho\mu,\gamma\right\}\right)^2}{\mathbb{E}[L(\xi)^2]}=\iota.
\end{equation*}		
Moreover, since $\xi^{k}$ is independent of $\mathcal{F}_k$, we have
$$\mathbb{E}\left[\left\|\hat{\epsilon}_3^{k}\right\|^2|\mathcal{F}_k\right]=\mathbb{E}\left[\left\|\hat{\epsilon}_3^{k}\right\|^2\right]=\mathbb{E}\left[\left\|\frac{1}{S_k}\sum_{j=1}^{S_k}\epsilon\left(\xi_j, z_{S_k}(x_k, t_k)\right)\right\|^2\right].$$
Combining these results with Lemmas \ref{QQ4} and \ref{epsi3}, we can bound the terms in \eqref{grapefruit} separately. Consequently, we obtain
\begin{equation*}
\begin{split}
	\mathbb{E}\left[\left\|x_{k+1}-x^*\right\|^2|\mathcal{F}_k\right]\leq & \left(1+\frac{1}{\sqrt{S_k}}Q(x^*)\right) \|x_k-x^*\|^2-\frac{(1-3\mu^2)\hat{\beta}\iota}{2\gamma^2}r_{\gamma}(x_{k})^2
	\\ &+\frac{1}{\sqrt{S_k}}P(x^*)+\varrho_{k}.
\end{split}
\end{equation*}
\qed

Based on the above lemmas, %and the following Egorov's theorem
we present the main conclusion.
% {\color{red}Don't find the reference.}
%\begin{theorem}\label{Egorov}[Egorov's Theorem]\cite{gagaeff1932suites}
%	Let $(X, \mathcal{F}, \kappa)$ be a space with a finite nonnegative measure $\varkappa$ and let $\varkappa$-measurable functions $f_n$ be such that $\varkappa$-almost everywhere there is a finite limit $f(x):=\lim_{n\rightarrow\infty}f_n(x).$ Then, for every $\zeta>0$, there exists a set $X_{\zeta}\in\mathcal{F}$ such that $\varkappa(X\backslash X_{\zeta})<\zeta$ and the functions $f_n$ converge to $f$ uniformly on $X_{\zeta}$.
%\end{theorem}
\begin{theorem}
Suppose that Assumptions \ref{Minty}, \ref{a22} and \ref{a3} hold. 
Let $\{x_k\}$ be the sequence generated by the Anderson(1)-SEG algorithm. Then, a.s. the sequence $\{x_k\}$ converges to a solution of ${\rm SVI}(X, H)$ and $r_{\gamma}(x_k)$ converges to 0. 
\end{theorem}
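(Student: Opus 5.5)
The plan is to apply the Robbins--Siegmund theorem (Theorem \ref{a.s.convergence}) to the recursion \eqref{ye} of Lemma \ref{impo}, and then run the standard Opial-type argument for stochastic extragradient methods. Fix $x^*\in{\rm SOL}(X,H)$ and $\hat\beta=\frac12$. Set $v_k=\|x_k-x^*\|^2$, $a_k=Q(x^*)/\sqrt{S_k}$, $u_k=\frac{(1-3\mu^2)\hat\beta\iota}{2\gamma^2}r_\gamma(x_k)^2$ and $b_k=P(x^*)/\sqrt{S_k}+\varrho_k$. All four quantities are nonnegative and $\mathcal{F}_k$-measurable. Here $\varrho_k$ is $\mathcal{F}_k$-measurable because membership of $k$ in $K_{SAA}$ is decided at step $k$; if needed, I would bound $\varrho_k$ by the deterministic quantity $M(M+1)\nu^2(1+i)^{-2\tau}$ indexed by the Anderson counter. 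Assumption \ref{a3} gives $\sum_k a_k<\infty$. Since $\tau>\frac12$, the Anderson steps contribute at most $\sum_{i\ge1}M(M+1)\nu^2(1+i)^{-2\tau}<\infty$, so $\sum_k b_k<\infty$. Theorem \ref{a.s.convergence} then yields that a.s. $\|x_k-x^*\|$ converges and $\sum_k r_\gamma(x_k)^2<\infty$; in particular $r_\gamma(x_k)\to0$ a.s. and $\{x_k\}$ is a.s. bounded.

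Next I pass to a probability-one event valid for all solutions simultaneously. Because ${\rm SOL}(X,H)$ is a subset of $\mathbb{R}^n$, I take a countable dense subset $\{x^*_m\}$ of it and intersect the countably many full-measure events. On the resulting event $\Omega_0$, $\|x_k-x^*_m\|$ converges for every $m$ and $r_\gamma(x_k)\to 0$. By the triangle inequality, density then gives convergence of $\|x_k-x^*\|$ for every $x^*\in{\rm SOL}(X,H)$ on $\Omega_0$.

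Fix $\omega\in\Omega_0$. The sequence is bounded, so some subsequence $x_{k_j}\to\bar x$. Continuity of $r_\gamma$ (from Lemma \ref{lee}-\eqref{lee0} and Remark \ref{lip}-\eqref{lip1}) gives $r_\gamma(\bar x)=0$. One subtlety is that $x_k$ may leave $X$ after an Anderson step with $\alpha_k\notin[0,1]$; this does not matter, since Lemma \ref{equ} only requires $P_X(\bar x-\gamma H(\bar x))=\bar x$, which forces $\bar x\in X$. Hence $\bar x\in{\rm SOL}(X,H)$. Then $\|x_k-\bar x\|$ converges, and along the subsequence it tends to 0, so the whole sequence converges to $\bar x$.

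The main obstacle is the measurability and summability of $\varrho_k$ within the Robbins--Siegmund framework, together with the uniformity over uncountably many solutions. I would handle the first by noting that $\theta_k$ counts the Anderson steps, so the sum of $\varrho_k$ is pathwise bounded by a convergent deterministic series. I would handle the second by the countable-dense-subset argument above. The remaining steps are routine given Lemma \ref{impo}.
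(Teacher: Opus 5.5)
Your argument follows the paper's proof. You use the same application of Theorem~\ref{a.s.convergence} to \eqref{ye} with identical $v_k,a_k,u_k,b_k$, then boundedness, a cluster point, continuity of $r_\gamma$ with Lemma~\ref{equ}, and the Opial-type finish. Where you deviate, you are more careful than the paper. Its last step applies the a.s.\ convergence of $\|x_k-x^*\|$, proved separately for each fixed $x^*$, to the $\omega$-dependent cluster point $\hat x$ without comment. Your countable dense subset of ${\rm SOL}(X,H)$ is exactly the repair this needs. Your remark that $r_\gamma(\bar x)=0$ forces $\bar x\in X$, even though Anderson steps can leave $X$, is also right.

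The one step that fails as written is the claim that $\varrho_k$ is $\mathcal{F}_k$-measurable. Membership of $k$ in $K_{SAA}$ is decided at step $k$, but through $F_{\gamma,\xi^k}$, $t_k$, $y_{k+0.5}$, $y_{k+1}$ and $\alpha_k$. These depend on the fresh samples $\xi^k,\eta^k$, which are not in $\mathcal{F}_k=\sigma(x_0,\xi^0,\dots,\xi^{k-1},\eta^0,\dots,\eta^{k-1})$, so one only has $\varrho_k\in\mathcal{F}_{k+1}$. The paper makes the same slip when it keeps $\varrho_k$ outside $\mathbb{E}[\cdot\mid\mathcal{F}_k]$ in the proof of Lemma~\ref{impo}.

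Your fallback does not fix this. Dropping the indicator of $\{k\in K_{SAA}\}$ gives the $\mathcal{F}_k$-measurable bound $M(M+1)\nu^2\theta_k^{-2\tau}$. That bound is in general not summable, because $\theta_k$ is frozen on non-Anderson iterations (e.g.\ if only finitely many Anderson steps occur). Keeping the indicator just gives back $\varrho_k$ itself.

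The correct fix uses your pathwise bound through an expectation. Conditioning \eqref{lem4.6} on $\mathcal{F}_k$ gives \eqref{ye} with $\mathbb{E}[\varrho_k\mid\mathcal{F}_k]$ in place of $\varrho_k$, so take $b_k=P(x^*)/\sqrt{S_k}+\mathbb{E}[\varrho_k\mid\mathcal{F}_k]$. By monotone convergence, $\mathbb{E}\big[\sum_k\mathbb{E}[\varrho_k\mid\mathcal{F}_k]\big]=\mathbb{E}\big[\sum_k\varrho_k\big]\le M(M+1)\nu^2\sum_{i\ge0}(1+i)^{-2\tau}<\infty$, hence $\sum_k b_k<\infty$ a.s. With this change the rest of your proof goes through verbatim.
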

{\it \textbf{Proof}}\quad 
Let $x^*$ be a solution of ${\rm SVI}(X, H)$.
Since $\sum_{k=0}^{\infty}\frac{1}{\sqrt{S_k}}<\infty$ (Assumption \ref{a3}) and $\sum_{k=0}^\infty \varrho_k<\infty$ (Lemma \ref{lem4.66} and $\tau>\frac{1}{2}$), it follows that $$\sum_{k=0}^{\infty}\frac{1}{\sqrt{S_k}}Q(x^*)<\infty \quad{\rm and} \quad \sum_{k=0}^{\infty}\left(\frac{1}{\sqrt{S_k}}P(x^*)+\varrho_{k}\right)<\infty.$$ Recalling Lemma \ref{impo}, and using the fact that $\mu\in(0,\frac{\sqrt{3}}{3})$, we apply Theorem \ref{a.s.convergence} with $v_{k}=\|x_k-x^*\|^2,$ $a_k=\frac{1}{\sqrt{S_k}}Q(x^*),$ $u_k=\frac{(1-3\mu^2)\hat{\beta}\iota}{2\gamma^2}r_{\gamma}(x_{k})^2$ and $b_k=\frac{1}{\sqrt{S_k}}P(x^*)+\varrho_{k}.$ This yields that a.s. $\|x_k-x^*\|^2$ converges and $\sum_{k=0}^{\infty}r_{\gamma}(x_{k})^2<\infty$.

%By Theorem \ref{Egorov}, we deduce that for any $\zeta\in(0,1)$, there exists a measurable set $X_{\zeta}\subseteq X$ such that  $\mathbb{P}(X_{\zeta})\geq 1-\zeta$ and $\|x_k-x^*\|^2$ converges uniformly on $X_{\zeta}$.
In particular, a.s. $\{x_k\}$ is bounded, which implies that, a.s., there exists a cluster point $\hat{x}$ and a subsequence $\{x_{s_j}\}$ satisfying 
\begin{equation}\label{subsequence}
\lim_{j\rightarrow\infty}x_{s_j}=\hat{x}.
\end{equation}
Moreover, we have that a.s.
\begin{equation}\label{xuan} \lim_{k\rightarrow\infty}r_{\gamma}(x_k)^2=\lim_{k\rightarrow\infty}\|x_k-P_{X}\left(x_k-\gamma H( x_k)\right)\|^2=0.
\end{equation}
From \eqref{xuan} and the continuity of both $H$ and $P_{X}$, we deduce that a.s.
$\|\hat{x}-P_{X}(\hat{x}-\gamma H(\hat{x}))\|=0.$ Together with Lemma \ref{equ}, this implies that a.s. $\hat{x}\in{\rm SOL}(X, H)$. 
Since a.s. $\|x_k-x^*\|$ converges for any $x^*\in{\rm SOL}(X, H)$, it follows that a.s. $\|x_k-\hat{x}\|$ also converges. Together with \eqref{subsequence}, we have that a.s.
$$\lim_{k\rightarrow\infty}\|x_k-\hat{x}\|=\lim_{j\rightarrow\infty}\|x_{s_j}-\hat{x}\|=0.$$
\qed

Before presenting the conclusion about the convergence rate, we first provide an upper bound for $\sup_{k\geq k_0}\mathbb{E}\left[\left\|x_k-x^*\right\|^2\right]$.
\begin{lemma}\label{boundsup}
Suppose Assumptions \ref{Minty}, \ref{a22} and \ref{a3} hold. Let $\{x_k\}$ be the sequence generated by the Anderson(1)-SEG algorithm. Let $x^*\in{\rm SOL}(X, H)$, and choose $k_0\in\mathbb{N}$, $\phi\in(0,1)$ and $\psi>0$ such that	$\sum_{k=k_0}^{\infty}\frac{1}{\sqrt{S_k}}<\frac{\phi}{Q(x^*)}$ and $\sum_{k=k_0}^{\infty}\varrho_{k}<\psi$. Then, we have$$\sup_{k\geq k_0}\mathbb{E}\left[\left\|x_k-x^*\right\|^2\right]\leq \frac{\mathbb{E}\left[\left\|x_{k_0}-x^*\right\|^2\right]+P(x^*)\frac{\phi}{Q(x^*)}+\psi}{1-\phi}.$$
\end{lemma}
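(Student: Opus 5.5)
The approach is to take total expectations in the recursion of Lemma \ref{impo}, drop the nonpositive residual term, and telescope from $k_0$.

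\textbf{Step 1: the one-step inequality in expectation.} Write $a_k:=\mathbb{E}[\|x_k-x^*\|^2]$. Applying $\mathbb{E}[\cdot]$ to \eqref{ye}, using the tower property and discarding the term $-\frac{(1-3\mu^2)\hat{\beta}\iota}{2\gamma^2}r_{\gamma}(x_k)^2\le 0$, we obtain for every $k$
\[
a_{k+1}\le \left(1+\tfrac{Q(x^*)}{\sqrt{S_k}}\right)a_k+\tfrac{P(x^*)}{\sqrt{S_k}}+\mathbb{E}[\varrho_k].
\]
Here $\varrho_k$ is random because membership in $K_{SAA}$ depends on the samples. Still, $\varrho_k$ is nonzero only at the $i$-th Anderson step $k_i$, where it equals $M(M+1)\nu^2(1+i)^{-2\tau}$. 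I interpret the hypothesis $\sum_{k\ge k_0}\varrho_k<\psi$ as holding pathwise (almost surely). Then $\sum_{k\ge k_0}\mathbb{E}[\varrho_k]\le\psi$ follows by monotone convergence. If the hypothesis is instead meant deterministically, we simply use it directly.

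\textbf{Step 2: telescoping.} For $N> k_0$, summing the inequality from $k_0$ to $N-1$ gives
\[
a_N\le a_{k_0}+\sum_{k=k_0}^{N-1}\frac{Q(x^*)}{\sqrt{S_k}}a_k+P(x^*)\sum_{k=k_0}^{N-1}\frac{1}{\sqrt{S_k}}+\sum_{k=k_0}^{N-1}\mathbb{E}[\varrho_k].
\]
Let $A_N:=\max_{k_0\le k\le N}a_k$. This is finite: $a_k<\infty$ for every $k$ by induction from the same inequality, since $a_0=\|x_0-x^*\|^2$ is deterministic. Bounding each $a_k$ on the right by $A_N$ and using $\sum_{k\ge k_0}S_k^{-1/2}<\phi/Q(x^*)$ gives
\[
a_N\le a_{k_0}+\phi A_N+P(x^*)\frac{\phi}{Q(x^*)}+\psi .
\]
The same bound holds with $N$ replaced by any $m\in[k_0,N]$, because the partial sums are smaller. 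It also holds trivially at $m=k_0$. Taking the maximum over $m$ yields $A_N\le a_{k_0}+\phi A_N+P(x^*)\phi/Q(x^*)+\psi$.

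\textbf{Step 3: conclusion.} Since $\phi<1$ and $A_N$ is finite, we can rearrange to get $A_N\le \bigl(a_{k_0}+P(x^*)\phi/Q(x^*)+\psi\bigr)/(1-\phi)$. The bound is uniform in $N$, so letting $N\to\infty$ gives the stated estimate for $\sup_{k\ge k_0}a_k$.

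The only delicate point is the finiteness of each $a_k$, which is needed to justify the rearrangement. The inductive inequality from Step 1 provides it, since $\mathbb{E}[\varrho_k]\le M(M+1)\nu^2$ for every $k$.
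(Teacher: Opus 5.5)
Your proof is correct and follows the paper's route: take total expectations in the recursion \eqref{ye}, drop the residual term, telescope from $k_0$, absorb the weighted sum into $\phi$ times a uniform bound on $\mathbb{E}[\|x_k-x^*\|^2]$, and rearrange using $\phi<1$. The differences are minor improvements rather than a new argument. You use the running maximum $A_N$ where the paper uses a first-exceedance index $\varpi_a$ and a contradiction, which gives the explicit constant directly and makes explicit the finiteness of each $\mathbb{E}[\|x_k-x^*\|^2]$. You also carry $\mathbb{E}[\varrho_k]$ instead of treating the sample-dependent $\varrho_k$ as deterministic.
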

{\it \textbf{Proof}}\quad Taking expectation in \eqref{ye} and applying $\mathbb{E}\left[\mathbb{E}\left[\cdot|\mathcal{F}_k\right]\right]=\mathbb{E}[\cdot]$, we drop the negative term to obtain
\begin{equation}\label{eqs}
\begin{split}
	\mathbb{E}\left[\left\|x_{k+1}-x^*\right\|^2\right]\leq \left(1+\frac{1}{\sqrt{S_k}}Q(x^*)\right) \mathbb{E}\left[\|x_k-x^*\|^2\right]+\frac{1}{\sqrt{S_k}}P(x^*)+\varrho_{k}.
\end{split}
\end{equation}
Let $d_i=\|x_i-x^*\|$ for $i\in\mathbb{N}_0$ and $k> k_0$.
Then, summing \eqref{eqs} from $k_0$ to $k-1$ yields 
\begin{equation}\label{dk}
\mathbb{E}\left[d_k^2\right]\leq \mathbb{E}\left[d_{k_0}^2\right] +Q(x^*)\sum_{i=k_0}^{k-1}\frac{1}{\sqrt{S_i}}\mathbb{E}\left[d_i^2\right]+P(x^*)\sum_{i=k_0}^{k-1}\frac{1}{\sqrt{S_i}}+\sum_{i=k_0}^{k-1}\varrho_{i}.
\end{equation}
For any $a>0$, we define $\varpi_a:=\inf\{k>k_0: \mathbb{E}\left[d_k^2\right]^{\frac{1}{2}}> a\}$. Assume that $\mathbb{E}\left[d_k^2\right]$ is unbounded, then $\varpi_a<\infty$ for all $a>0$. By choosing $k=\varpi_a$ in \eqref{dk} and considering that 
$\sum_{k=k_0}^{\infty}\frac{1}{\sqrt{S_k}}<\frac{\phi}{Q(x^*)}$ and $\sum_{k=k_0}^{\infty}\varrho_{k}<\psi$, we obtain
\begin{equation*}
\begin{split}
	a^2<\mathbb{E}\left[d_{\varpi_a}^2\right]\leq & \mathbb{E}\left[d_{k_0}^2\right]+Q(x^*)\sum_{i=k_0}^{\varpi_a-1}\frac{1}{\sqrt{S_i}}\mathbb{E}\left[d_{i}^2\right]+P(x^*)\sum_{i=k_0}^{\varpi_a-1}\frac{1}{\sqrt{S_i}}+\sum_{i=k_0}^{\varpi_a-1}\varrho_{i}
	%\\ \leq & \left|d_{k_0}\right|_2^2+Q(x^*)\sum_{i=k_0}^{\varpi_a-1}\frac{1}{\sqrt{S_i}}a^2+P(x^*)\sum_{i=k_0}^{\varpi_a-1}\frac{1}{\sqrt{S_i}}+\sum_{i=k_0}^{k-1}\varrho_{k}
	\\ \leq &\mathbb{E}\left[d_{k_0}^2\right]+\phi a^2+P(x^*)\frac{\phi}{Q(x^*)}+\psi,
\end{split}
\end{equation*}
which implies
$$a^2\leq \frac{\mathbb{E}\left[d_{k_0}^2\right]+P(x^*)\frac{\phi}{Q(x^*)}+\psi}{1-\phi}.$$This contradicts the arbitrariness of $a$, thus $ \mathbb{E}\left[d_{k}^2\right]$ is bounded and 
$$\sup_{k\geq k_0}\mathbb{E}\left[d_{k}^2\right]\leq \frac{\mathbb{E}\left[d_{k_0}^2\right]+P(x^*)\frac{\phi}{Q(x^*)}+\psi}{1-\phi}.$$
\qed
\begin{remark} 
Let $x^*\in{\rm SOL}(X, H)$. From Lemma \ref{boundsup}, it follows that under Assumptions \ref{Minty}, \ref{a22} and \ref{a3}, there exist 
$k_0\in\mathbb{N}$, $\phi\in(0,1)$ and $\psi>0$ such that
\begin{equation}\label{boundee} 
	\sup_{k\geq 0}\mathbb{E}\left[\|x_k-x^*\|^2\right]\leq \frac{\max_{k=0,\cdots,k_0}\mathbb{E}\left[\left\|x_{k_0}-x^*\right\|^2\right]+P(x^*)\frac{\phi}{Q(x^*)}+\psi}{1-\phi}.
\end{equation}
\end{remark}
Based on the above results, the following two theorems establish a sublinear convergence rate for the mean residual function, as well as the optimal oracle complexity of the proposed algorithm.
\begin{theorem}\label{rates}
Suppose that Assumptions \ref{Minty}, \ref{a22} and \ref{a3} hold. Let $\{x_k\}$ be the sequence generated by the Anderson(1)-SEG algorithm. Then we have
$$\min_{0\leq k \leq N}\mathbb{E}\left[r_{\gamma}(x_k)^2\right]=\mathcal{O}\left(\frac{1}{N}\right).$$
\end{theorem}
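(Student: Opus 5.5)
We take total expectation in the one-step recursion \eqref{ye} of Lemma \ref{impo} and telescope. Fix $x^*\in{\rm SOL}(X,H)$ and any $\hat\beta\in(0,\frac12]$, say $\hat\beta=\frac12$. Set $c:=\frac{(1-3\mu^2)\hat\beta\iota}{2\gamma^2}>0$; it is positive because $\mu<\frac{\sqrt3}{3}$ and $\iota>0$ by Lemma \ref{tab222}. Taking $\mathbb{E}[\cdot]$ in \eqref{ye} and using the tower property gives
\begin{equation*}
c\,\mathbb{E}\left[r_\gamma(x_k)^2\right]\le \mathbb{E}\left[\|x_k-x^*\|^2\right]-\mathbb{E}\left[\|x_{k+1}-x^*\|^2\right]+\frac{1}{\sqrt{S_k}}\left(Q(x^*)\,\mathbb{E}\left[\|x_k-x^*\|^2\right]+P(x^*)\right)+\varrho_k .
\end{equation*}

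By \eqref{boundee} (Lemma \ref{boundsup} and the subsequent remark), $\Gamma:=\sup_{k\ge0}\mathbb{E}[\|x_k-x^*\|^2]<\infty$. This supplies the uniform moment bound needed to control the multiplicative noise term $\frac{1}{\sqrt{S_k}}Q(x^*)\mathbb{E}[\|x_k-x^*\|^2]$. I regard this as the only genuinely delicate point, and it has already been settled earlier in the paper. A minor point to check is that the right-hand side of \eqref{boundee} is finite: this needs $\mathbb{E}[\|x_k-x^*\|^2]<\infty$ for $k\le k_0$, which follows inductively from \eqref{eqs}, starting from the deterministic $x_0$.

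Summing the inequality over $k=0,\dots,N$, the difference terms telescope and we drop $-\mathbb{E}[\|x_{N+1}-x^*\|^2]\le0$. This yields
\begin{equation*}
c\sum_{k=0}^{N}\mathbb{E}\left[r_\gamma(x_k)^2\right]\le \|x_0-x^*\|^2+\left(Q(x^*)\Gamma+P(x^*)\right)\sum_{k=0}^{\infty}\frac{1}{\sqrt{S_k}}+\sum_{k=0}^{\infty}\varrho_k=:C<\infty .
\end{equation*}
Here $\sum_k S_k^{-1/2}<\infty$ by Assumption \ref{a3}. For the last sum, $\sum_k\varrho_k\le M(M+1)\nu^2\sum_{i\ge1}(1+i)^{-2\tau}<\infty$ because $\tau>\frac12$; the indices $k\in K_{SAA}$ are enumerated by distinct $i$, so each term appears at most once.

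Finally, a minimum is at most an average:
\begin{equation*}
\min_{0\le k\le N}\mathbb{E}\left[r_\gamma(x_k)^2\right]\le\frac{1}{N+1}\sum_{k=0}^{N}\mathbb{E}\left[r_\gamma(x_k)^2\right]\le\frac{C}{c(N+1)}=\mathcal{O}\left(\frac1N\right),
\end{equation*}
which is the claim.
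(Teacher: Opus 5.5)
Your proposal is correct and follows the paper's proof. Like the paper, you take total expectation in \eqref{ye}, telescope over $k=0,\dots,N$, control the multiplicative term with the uniform bound \eqref{boundee} together with $\sum_k S_k^{-1/2}<\infty$ and $\sum_k\varrho_k<\infty$, and then bound the minimum by the average. Your two extra checks, that $\mathbb{E}[\|x_k-x^*\|^2]<\infty$ for $k\le k_0$ and that $\sum_k\varrho_k$ has a deterministic bound even though $\varrho_k$ is random, are sound and make explicit steps the paper leaves implicit.
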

{\it \textbf{Proof}}\quad  Let $x^*\in{\rm SOL}(X, H)$. Taking expectation in \eqref{ye} and using $\mathbb{E}\left[\mathbb{E}\left[\cdot|\mathcal{F}_k\right]\right]=\mathbb{E}[\cdot]$, we recursively sum the resulting inequality from $k=0$ to $k=N$ to obtain
\begin{equation*}
\begin{split}
	\frac{(1-3\mu^2)\hat{\beta}\iota}{2\gamma^2}\sum_{k=0}^N\mathbb{E}\left[r_{\gamma}(x_{k})^2\right]\leq & \|x_0-x^*\|^2+Q(x^*)\sum_{k=0}^{N}\frac{1}{\sqrt{S_k}}\mathbb{E}\left[\|x_k-x^*\|^2\right]
	\\ &+P(x^*)\sum_{k=0}^{N}\frac{1}{\sqrt{S_k}}+\sum_{k=0}^{N}\varrho_{k}.
\end{split}
\end{equation*}
Since $\sum_{k=0}^{\infty}\frac{1}{\sqrt{S_k}}<\infty$ (Assumption \ref{a3}), $\sum_{k=0}^\infty \varrho_k<\infty$ (Lemma \ref{lem4.66} and $\tau>\frac{1}{2}$) and \eqref{boundee}, we know that there exists $\Phi>0$ such that
\begin{equation*}
\begin{split}
	\sum_{k=0}^N\mathbb{E}\left[r_{\gamma}(x_{k})^2\right]\leq \frac{2\gamma^2}{(1-3\mu^2)\hat{\beta}\iota}\Phi,
\end{split}
\end{equation*}
which implies
$$\min_{0\leq k\leq N}\mathbb{E}\left[r_{\gamma}(x_{k})^2\right]\leq\frac{1}{N+1}\sum_{k=0}^N\mathbb{E}\left[r_{\gamma}(x_{k})^2\right] \leq\frac{2\gamma^2}{(N+1)(1-3\mu^2)\hat{\beta}\iota}\Phi.$$
\qed
\begin{theorem}
Suppose that Assumptions \ref{Minty}, \ref{a22} and \ref{a3} hold. Let $\{x_k\}$ be the sequence generated by the Anderson(1)-SEG algorithm. Set $S_k:=\mathcal{N}\lceil (k+\lambda)^2(\ln (k+\lambda))^{2+2b} \rceil$ for $\mathcal{N}=\mathcal{O}(n^2), b>0$ and $\lambda>2.$ Given $\varsigma>0$, the Anderson(1)-SEG algorithm achieves the tolerance
$$ \min_{0\leq k\leq N}\mathbb{E}\left[r_{\gamma}(x_{k})^2\right]\ \leq \varsigma$$
after $N=\mathcal{O}(b^{-1}\varsigma^{-1})$ iterations and the oracle complexity $	\sum_{k=0}^{N}(1+m_k)S_k$ is bounded above by
$$b^{-3}\log_{\frac{1}{\rho}}\left(\frac{\gamma\tilde{L}_k}{\min\{\rho\mu,\gamma\}}\right)\ln (b^{-1}\varsigma^{-1})^{2+2b}\mathcal{O}(n^2\varsigma^{-3})$$
with probability 1, where $m_k$ is the number of oracle calls used in the line search scheme \eqref{lins} at the $k$-iteration and $\tilde{L}_k$ is defined as in Lemma \ref{tab222}.
\end{theorem}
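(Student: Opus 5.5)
The plan is to reuse the summed inequality from the proof of Theorem \ref{rates} while keeping explicit track of how its constant depends on the sample rate $S_k=\mathcal{N}\lceil (k+\lambda)^2(\ln(k+\lambda))^{2+2b}\rceil$, and then to count oracle calls. First, I would verify Assumption \ref{a3} for this choice. Since $\frac{1}{\sqrt{S_k}}\leq \mathcal{N}^{-1/2}(k+\lambda)^{-1}(\ln(k+\lambda))^{-1-b}$, an integral comparison gives
\begin{equation*}
\sum_{k=0}^{\infty}\frac{1}{\sqrt{S_k}}\leq \mathcal{N}^{-1/2}\left(\frac{1}{\lambda(\ln\lambda)^{1+b}}+\int_{\lambda}^{\infty}\frac{ds}{s(\ln s)^{1+b}}\right)=\mathcal{O}\left(\mathcal{N}^{-1/2}\,b^{-1}(\ln\lambda)^{-b}\right)=\mathcal{O}(b^{-1}).
\end{equation*}
Here $\lambda>2$ guarantees $\ln\lambda>0$. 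Then I would redo the proof of Theorem \ref{rates}: sum the expectation of \eqref{ye} from $0$ to $N$ and use the uniform second-moment bound \eqref{boundee}. The constant $\Phi$ is then of the form
\begin{equation*}
\|x_0-x^*\|^2+\left(Q(x^*)\sup_k\mathbb{E}\|x_k-x^*\|^2+P(x^*)\right)\sum_k S_k^{-1/2}+\sum_k\varrho_k,
\end{equation*}
which is $\mathcal{O}(b^{-1})$. The bound on $\sup_k\mathbb{E}\|x_k-x^*\|^2$ stays bounded independently of $b$, because $k_0$ can be chosen so that the tail $\sum_{k\geq k_0}S_k^{-1/2}$ is small, as in Lemma \ref{boundsup}. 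This gives $\min_{0\le k\le N}\mathbb{E}[r_\gamma(x_k)^2]\leq C b^{-1}/(N+1)$, so taking $N=\lceil C b^{-1}\varsigma^{-1}\rceil$ achieves the tolerance $\varsigma$.

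Second, I would bound the number of line-search trials $m_k$ pathwise. Lemma \ref{tab222} gives $t_k=\gamma\rho^{m_k}\geq \min\{\rho\mu,\gamma\}/\tilde L_k$ almost surely. Since $t_k$ is the first accepted step, the backtracking stops no later than the first $m$ with $\gamma\rho^m\le \min\{\rho\mu,\gamma\}/\tilde L_k$. Hence
\begin{equation*}
m_k\leq \log_{1/\rho}\left(\frac{\gamma\tilde L_k}{\min\{\rho\mu,\gamma\}}\right)+1
\end{equation*}
with probability 1, and therefore $1+m_k=\mathcal{O}\left(\log_{1/\rho}\left(\gamma\tilde L_k/\min\{\rho\mu,\gamma\}\right)\right)$. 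Each trial costs $S_k$ samples; the two extragradient evaluations only add a constant factor.

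Third, I would sum the sample sizes. Using $S_k\le \mathcal{N}\left((N+\lambda)^2(\ln(N+\lambda))^{2+2b}+1\right)$ for $k\le N$ gives $\sum_{k=0}^N S_k=\mathcal{O}\left(\mathcal{N}(N+\lambda)^3(\ln(N+\lambda))^{2+2b}\right)$. Substituting $N=\mathcal{O}(b^{-1}\varsigma^{-1})$ and $\mathcal{N}=\mathcal{O}(n^2)$ yields
\begin{equation*}
\mathcal{O}\left(n^2\,b^{-3}\varsigma^{-3}\,\left(\ln(b^{-1}\varsigma^{-1})\right)^{2+2b}\right).
\end{equation*}
Multiplying by the per-iteration line-search factor gives the stated bound.

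I expect the main obstacle to be the first step. One must carefully justify that $\Phi$ scales like $b^{-1}$ and that the a priori bound \eqref{boundee} does not secretly depend on $b$ in a worse way. The line-search factor is also delicate, since it involves $\tilde L_k$, which varies with $k$. I would state it with a maximum over $k\le N$, or accept it in the form written, as the theorem does.
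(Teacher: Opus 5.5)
Your route is the paper's: the integral bound for $\sum_k S_k^{-1/2}$, the summed form of \eqref{ye} from Theorem \ref{rates} together with \eqref{boundee}, the bound on $m_k$ from Lemma \ref{tab222} (your extra $+1$ is harmless), and a crude sum of the $S_k$ up to $N$. The last two steps are fine. The trouble is in how you control the constant $C$ in $N=\lceil Cb^{-1}\varsigma^{-1}\rceil$.

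First, you discard the factor $\mathcal{N}^{-1/2}$ in $\sum_k S_k^{-1/2}$, and you never track the dimension inside $Q(x^*)$. But $Q(x^*)$ in \eqref{Q} contains $6\gamma^2(M+1)\hat L_4^2$. In turn, $\hat L_4$ contains $c_2$, which carries a factor $\sqrt{n}$ (see the remark after Lemma \ref{epsi3}), so $Q(x^*)$ grows like $n$. With your bookkeeping $C$ is of order $n$. Inserting $N\sim n\,b^{-1}\varsigma^{-1}$ into $\mathcal{N}(N+\lambda)^3(\ln(N+\lambda))^{2+2b}$ with $\mathcal N\sim n^2$ then gives an $n^5$ factor, not $n^2$. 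The paper keeps $\mathcal{N}^{-1/2}$ and writes the rate as $nU(\sqrt{\mathcal{N}}\,bN)^{-1}$. The choice $\mathcal{N}\sim n^2$ is exactly what cancels the $n$ coming from $c_2$ and makes the iteration count dimension-free. That idea is missing from your first step.

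Second, your reason for the $b$-independence of $\sup_k\mathbb{E}\|x_k-x^*\|^2$ does not work. The right-hand side of \eqref{boundee} contains $\max_{k\le k_0}\mathbb{E}\|x_k-x^*\|^2$. The requirement $Q(x^*)\sum_{k\ge k_0}S_k^{-1/2}<\phi$ forces $k_0\to\infty$ as $b\downarrow 0$, because the tail behaves like $(\sqrt{\mathcal{N}}\,b\,(\ln k_0)^{b})^{-1}$. The only control of the iterates before $k_0$ that the lemmas provide is iterating \eqref{eqs}. That produces the factor $\exp\bigl(Q(x^*)\sum_k S_k^{-1/2}\bigr)$, which is of size $\exp(c\,b^{-1})$ when $\mathcal{N}\sim n^2$. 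So $\Phi=\mathcal{O}(b^{-1})$ is not established by what you wrote.

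The paper does not settle this point either; it simply asserts the existence of $U$. Here you are at the paper's level of rigor, but the justification you give is incorrect. There are two ways to make it rigorous. One is to treat $b$ and $\lambda$ as fixed, so that the $b^{-1}$ factors are bookkeeping only. The other is to take $\mathcal{N}$ large enough, depending on $b$, that the tail condition of Lemma \ref{boundsup} already holds with $k_0=1$. Then \eqref{boundee} is free of $b$ and $\Phi=\mathcal{O}(1)$, at the price of a $b$-dependent $\mathcal{N}$.
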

{\it \textbf{Proof}} \quad 
From the form of $S_k$, we can compute that 
\begin{equation*}
\sum_{k=0}^{\infty}\frac{1}{\sqrt{S_k}}	\leq \int_{-1}^{\infty} \frac{dt}{\sqrt{\mathcal{N}}(t+\lambda)\left(\ln(t+\lambda)\right)^{1+b}}=\frac{1}{\sqrt{\mathcal{N}}b\left(\ln(\lambda-1)\right)^b}<\infty.
\end{equation*} 
According to the definitions of $\{c_i\}_{i=1}^4$, $\hat{L}_{2p}$ in Lemma \ref{epsi3}, $P(x^*)$ in \eqref{P} and $Q(x^*)$ in \eqref{Q}, along with Theorem \ref{rates}, there exists a constant $U>0$ such that for any $k\in\mathbb{N}_0$, $\min_{0\leq k\leq N}\mathbb{E}\left[r_{\gamma}(x_{k})^2\right]\leq nU(\mathcal{\sqrt{N}}bN)^{-1}.$ Hence, given $\varsigma>0$, we obtain $$ \min_{0\leq k\leq N}\mathbb{E}\left[r_{\gamma}(x_{k})^2\right]\ \leq \varsigma$$ after $N=\mathcal{O}(n(\mathcal{\sqrt{N}}b\varsigma)^{-1})$ iterations.

Let $m_k$ be the number of line search in the iteration $k$. After $N$ iterations, the oracle complexity is upper bounded by
\begin{equation}\label{ql}
\begin{split}
	&\sum_{k=0}^{N}(1+m_k)S_k
	\\ \leq & \left(1+\max_{k\in\{0,\cdots,N\}}m_k\right)\sum_{k=0}^{N}\left({\mathcal{N} (k+\lambda)^2(\ln (k+\lambda))^{2+2b}+1}\right)
	\\ = & \left(1+\max_{k\in\{0,\cdots,N\}}m_k\right)\sum_{k=\lambda}^{N+\lambda}\left({\mathcal{N} k^2(\ln k)^{2+2b}+1}\right)
	\\ \leq & \left(1+\max_{k\in\{0,\cdots,N\}}m_k\right)\left({\mathcal{N} (N+1)(N+\lambda)^2(\ln (N+\lambda))^{2+2b}+1}\right)
	\\ \leq & \left(1+\max_{k\in\{0,\cdots,N\}}m_k\right)\left(\mathcal{N}{ (b^{-1}\varsigma^{-1}+1)(b^{-1}\varsigma^{-1}+\lambda)^2(\ln (b^{-1}\varsigma^{-1}+\lambda))^{2+2b}+1}\right).
\end{split}
\end{equation}
Moreover, Lemma \ref{tab222} implies that a.s. $m_k\leq \log_{\frac{1}{\rho}}\left(\frac{\gamma\tilde{L}_k}{\min\{\rho\mu,\gamma\}}\right)$. 
Combining this with \eqref{ql} and $\mathcal{N}=\mathcal{O}(n^2)$ yields the claimed bound on $\sum_{k=0}^{N}(1+m_k)S_k$.
\qed
\section{Numerical experiments}
\label{section4}
In this section, we conduct numerical examples to compare the Anderson(1)-SEG algorithm with the SEG algorithm that incorporates line search \eqref{lins}. In addition, we also perform numerical experiments on portfolio selection problems using real data.
All the codes are written in Matlab (R2023b) and run on a MacBook Air (16.00GB of RAM).
	\subsection{Simple examples}
In the following experiments of this subsection,  
the stopping criterion for the two examples is set as
$$\tilde{r}_{\xi^k}(x_k):=\left\|F_{1,\xi^k}\right\|=\left\|P_{X}\left(x_k- H_{\xi^k}( x_k)\right)-x_k\right\|<10^{-2}$$
or when the number of iterations exceeds $200.$
The parameters for the Anderson(1)-SEG algorithm are specified as follows:
\begin{equation}\label{para}
	\nu=30, ~M=5000, ~\tau=0.6, ~\rho=0.8, ~\mu=0.5.
\end{equation}
In the figures and tables presented in this subsection, ‘Sec.’ denotes the CPU time measured in seconds, while ‘Iter.’ indicates the total number of iterations. 
The numbers in parentheses denote the number of executed Anderson steps \eqref{SAnderson}.
Additionally, the algorithm that performs best regarding the average number of iterations and CPU time is emphasized in bold for each combination of dimension $n$ and parameter $\gamma$.
\begin{example}\cite{cai20141}\label{exam41}
	Consider the following stochastic complementarity problem
	\begin{equation}\label{sc}
		\mathbb{E}[T(\xi,x)]\geq \mathbf{0}, ~~~x\geq\mathbf{0}, ~~~x^{{\rm T}}\mathbb{E}[T(\xi,x)]=0,
	\end{equation}
	where $T(\xi,x):=D(\xi, x)+Q(\xi)x+q(\xi)$ with $D(\xi,x)\in \mathbb{R}^n$ and $Q(\xi)x+q(\xi)\in \mathbb{R}^n$ to be its nonlinear and linear parts, respectively. Assume that
	\begin{enumerate}[{\rm (i)}]
		\item for $i = 1, \ldots, n$, the $i$-th component of $D(\xi,x)$ is given by $D_i(\xi,x):=d_i(\xi)\cdot{\rm arctan}(a_i(\xi)x_i),$ where $d(\xi)$ and $a(\xi)$ are random vectors in $\mathbb{R}^n$ with elements obeying uniform distributions on (0, 1);
		\item $Q(\xi):=B+Y(\xi),$ where $B\in\mathbb{R}^{n\times n}$ is a deterministic skew symmetric matrix with elements obeying uniform distributions on (0, 5) and $Y(\xi)\in\mathbb{R}^{n\times n}$ is a random diagonal matrix with elements obeying uniform distributions on (0, 2);
		\item $q(\xi)\in\mathbb{R}^n$ is a random vector with elements obeying uniform distributions on (-3, 3).
	\end{enumerate}
\end{example}
From the above generation method, it can be seen that $H(\cdot):=	\mathbb{E}[T(\xi,\cdot)]$ is monotone. Moreover, the stochastic complementarity problem given in \eqref{sc} can be reformulated as ${\rm SVI}(X, H)$ by choosing $X=\mathbb{R}_+^n$.

\begin{figure}[H]
	\centering
	\subfigure[$n=50$, $\gamma=0.05$]{\includegraphics[width=0.48\textwidth]{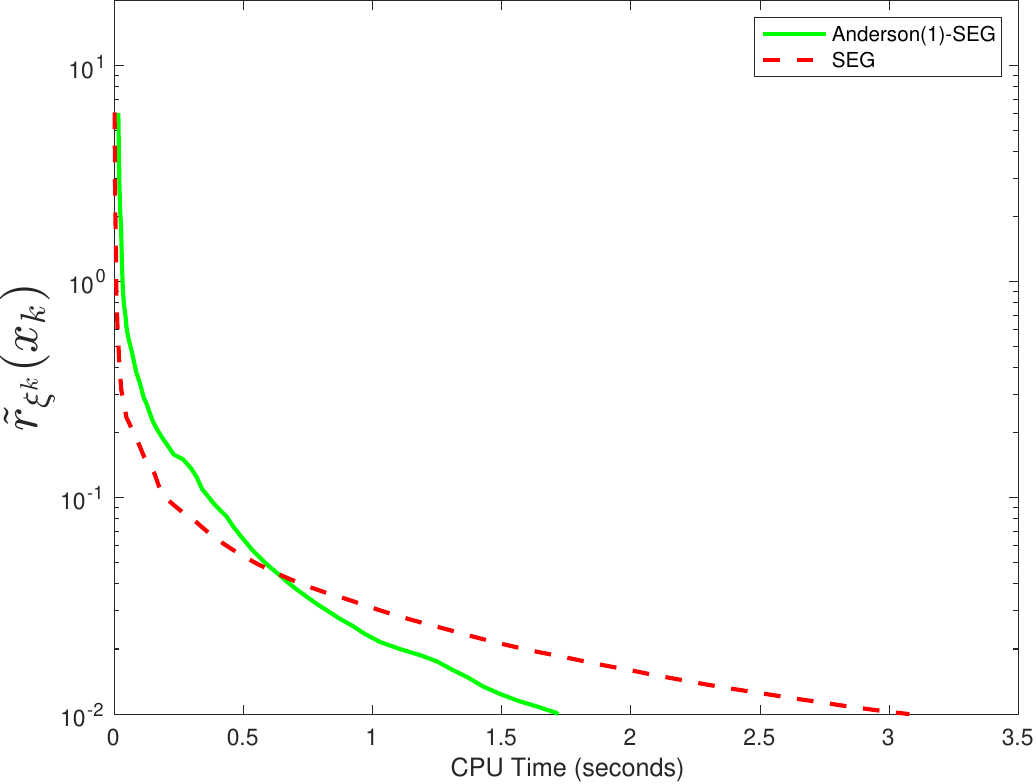}}
	\hfill
	\subfigure[$n=70$, $\gamma=0.03$]{\includegraphics[width=0.48\textwidth]{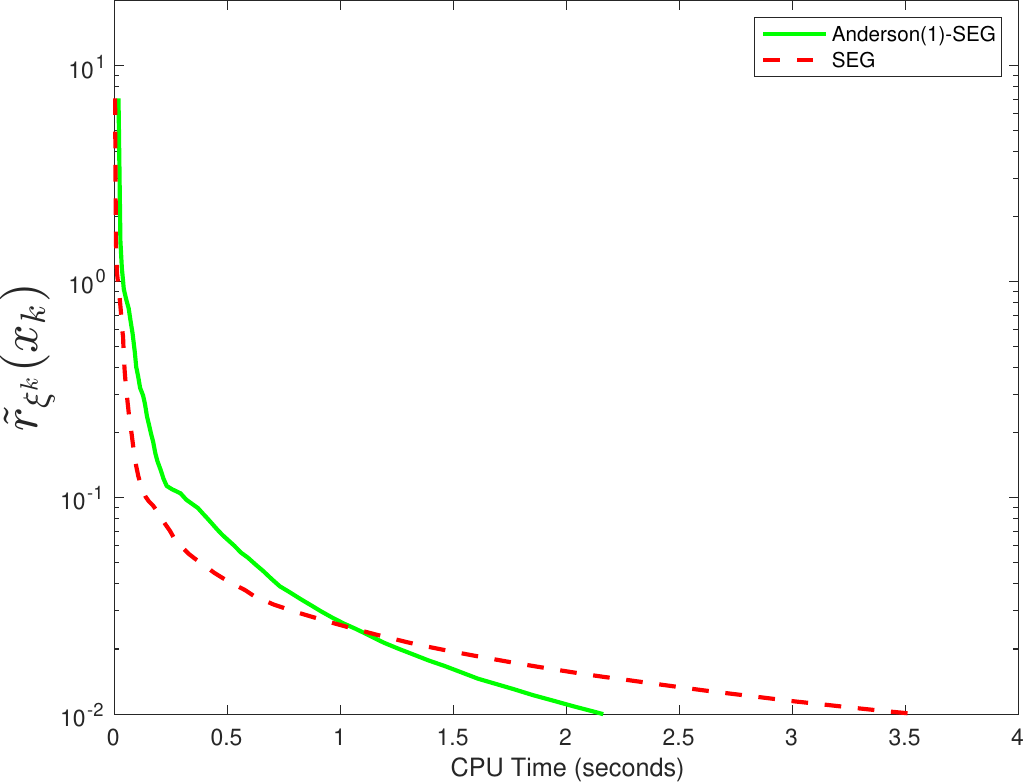}}
	
	\vspace{1em}
	
	\subfigure[$n=100$, $\gamma=0.02$]{\includegraphics[width=0.48\textwidth]{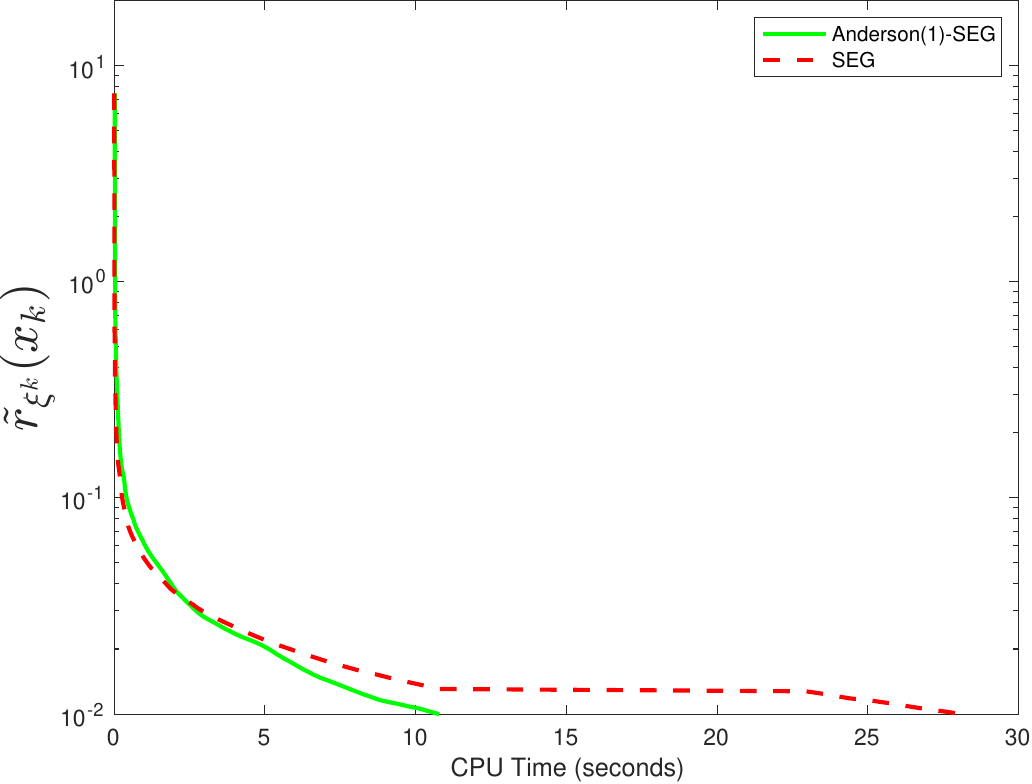}}
	\hfill
	\subfigure[$n=150$, $\gamma=0.01$]{\includegraphics[width=0.48\textwidth]{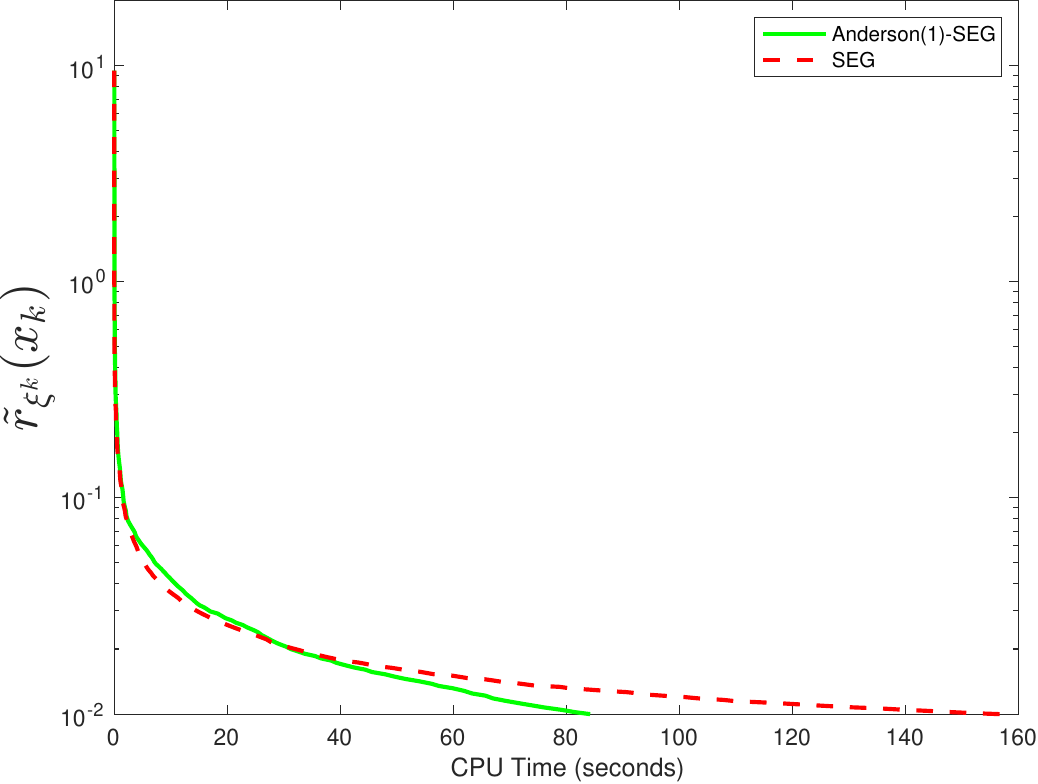}}
	
	\caption{Comparisons of the convergence behaviours of the Anderson(1)-SEG and the SEG algorithms for Example \ref{exam41}}
	\label{fig10203050}
\end{figure}
Let $S_k:=\mathcal{N}\lceil (k+\lambda)^2(\ln (k+\lambda))^{2+2b} \rceil$ with $\lambda=2+10^{-4}$, $b=10^{-4}$ and $\mathcal{N}=2$. 
Using the Anderson(1)-SEG and the SEG algorithms
to solve Example~\ref{exam41}, where both algorithms are initialized
from the same randomly chosen initial point, the experimental
results are presented in Figure~\ref{fig10203050}.
This figure depicts the variation of $\tilde{r}_{\xi^k}(x_k)$ with respect to CPU time for dimensions $n = 50, 70, 100$, and $150$. It can be seen that, upon reaching the stopping criterion, the Anderson(1)-SEG algorithm requires less CPU time than that of the SEG algorithm.

We also conduct experiments with $S_k$ set to $\mathcal{N}\lceil (k+\lambda)(\ln (k+\lambda))^{1+b} \rceil$, where $\lambda=5$, $b=0.1$ and $\mathcal{N}=20$. The corresponding results are shown in Table  \ref{t411}. 
Table \ref{t411} presents the average number of iterations and average CPU time for two algorithms across different dimensions based on ten independent experiments. 
As can be seen from Table \ref{t411}, 
in most cases, its numerical performance, in terms of both the number of iterations and CPU time, is better than that of the SEG algorithm.
\begin{table}[H]
	\centering
	\label{table2}
		\caption{Comparisons of the two algorithms for Example \ref{exam41}}\label{t411}
	\begin{tabular}{c|cccc}
		\toprule
		Sec.(avr)\\ Iter.(avr) &  & Anderson(1)-SEG & SEG \\
		\midrule 
		\multirow{2}{1cm}{$n=10$ $\gamma=0.1$} &  &    0.0407  &     \textbf{0.0355}  \\
		&  & \textbf{36.7} (27.7)   &  40   \\
		\midrule
		\multirow{2}{1cm}{$n=20$ $\gamma=0.08$} &  & \textbf{0.1376}  &   0.1390  \\
		& & \textbf{ 62.3} (41.9)  &  67.5 \\
		\midrule
		\multirow{2}{1cm}{$n=30$ $\gamma=0.03$} &  & \textbf{0.1258}  &   0.1372  \\
		& & \textbf{59.1} (46.1)   &    67.5 \\
		\midrule
		\multirow{2}{1cm}{$n=50$ $\gamma=0.02$} & & \textbf{0.3442
		} &   0.4192\\
		& & \textbf{80} (54.1)   &    94\\
		\midrule
		\multirow{2}{1cm}{$n=100$ $\gamma=0.02$} &  &  \textbf{0.7064}  &  0.8094\\
		& & \textbf{94.9} (63.8)   & 106.3 \\
		\bottomrule
	\end{tabular}
\end{table}
	\begin{example}\cite{kannan2019optimal}\label{exam42}
	Consider the following stochastic fractional convex quadratic problem
	\begin{equation}\label{ex442}
		\min_{x\in X}\mathbb{E}\left[\frac{f(\xi,x)}{g(x)}\right],
	\end{equation}
	where $$f(\xi,x):=0.5x^{\rm T}\left(0.025U^{\rm T}U+0.025\frac{\|U^{\rm T}U\|_F}{\|V(\xi)\|_F}V(\xi)\right)x+0.5\left((c+\bar{c}(\xi))^{\rm T}x+4n\right)^2,$$ $g(x):=r^{\rm T}x+d+4n$, and $X:=\{x:\mathbf{0}\leq x \leq 4\mathbf{e}\}$.
\end{example}
We note that \(V(\xi) \in \mathbb{R}^{n \times n}\) and \(\bar{c}(\xi) \in \mathbb{R}^n\) are randomly generated, where the elements of the matrix \(V(\xi)\) are drawn from a standard normal distribution and the elements of the vector \(\bar{c}(\xi)\) are drawn from a uniform distribution on (0, 1). 
Furthermore, \(U \in \mathbb{R}^{n \times n}\) and \(c \in \mathbb{R}^n\) are deterministic matrix and vector, respectively, whose elements are generated once from the standard normal distribution, while the elements of the vector \(r \in \mathbb{R}^n\) and the scalar \(d \in \mathbb{R}\) are generated once from uniform distributions on (0, 5).

It is easy to see that problem \eqref{ex442} is equivalent to the ${\rm SVI}(X, H)$ with
\begin{equation*}
	\begin{split}
		H(x)=\mathbb{E}[T(\xi, x)]= &\nabla_x \mathbb{E}\left[\frac{f(\xi,x)}{g(x)}\right] = \mathbb{E}\left[\frac{\nabla_x f(\xi,x) \cdot g(x) - f(\xi,x) \cdot \nabla g(x)}{[g(x)]^2}\right].
	\end{split}
\end{equation*}
By \cite{kannan2019optimal}, we know that $H$ is pseudomonotone on $X$.

Let $\gamma = 0.9$ and $S_k:=\mathcal{N}\lceil (k+\lambda)^2(\ln (k+\lambda))^{2+2b} \rceil$ with $\lambda=2+10^{-4}$, $b=10^{-4}$ and $\mathcal{N}=2$. For each dimension, both algorithms start
from the same randomly chosen initial point.
Figure~\ref{fig1020} presents the variation of $\tilde{r}_{\xi^k}(x_k)$ with
respect to CPU time for different dimensions. It can be observed that the numerical performance of the Anderson(1)-SEG algorithm is superior to that of the SEG algorithm for this problem.
\begin{figure}[H]
	\centering
	\subfigure[$n=10$]{\includegraphics[width=0.48\textwidth]{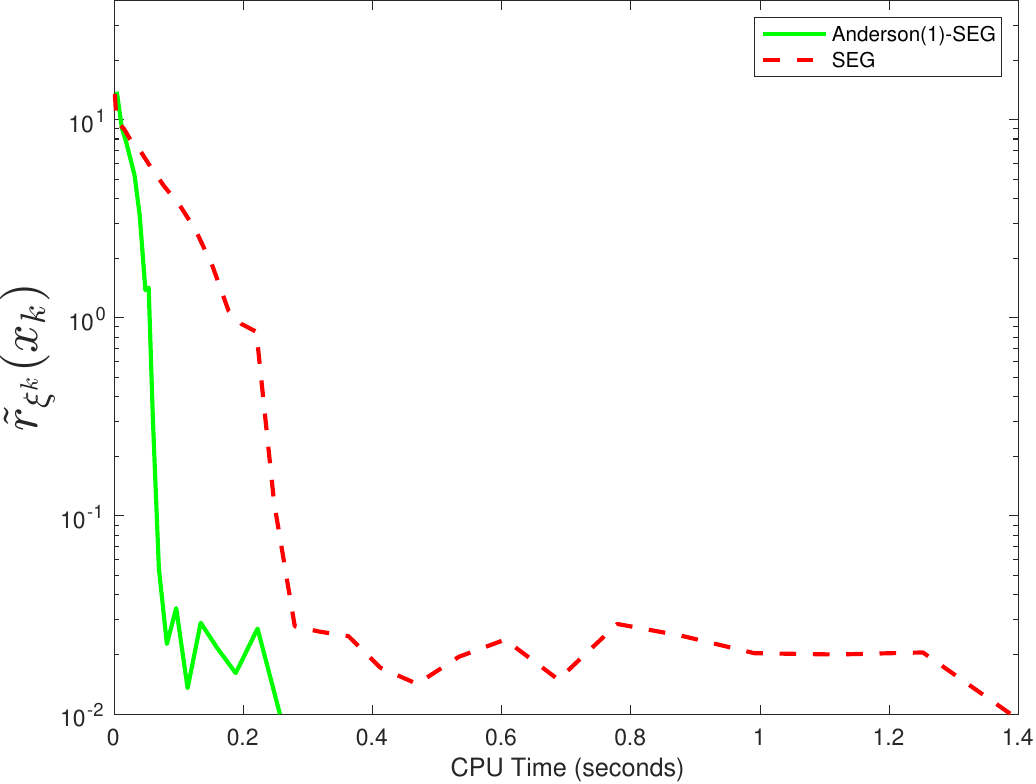}}
	\hfill
	\subfigure[$n=20$]{\includegraphics[width=0.48\textwidth]{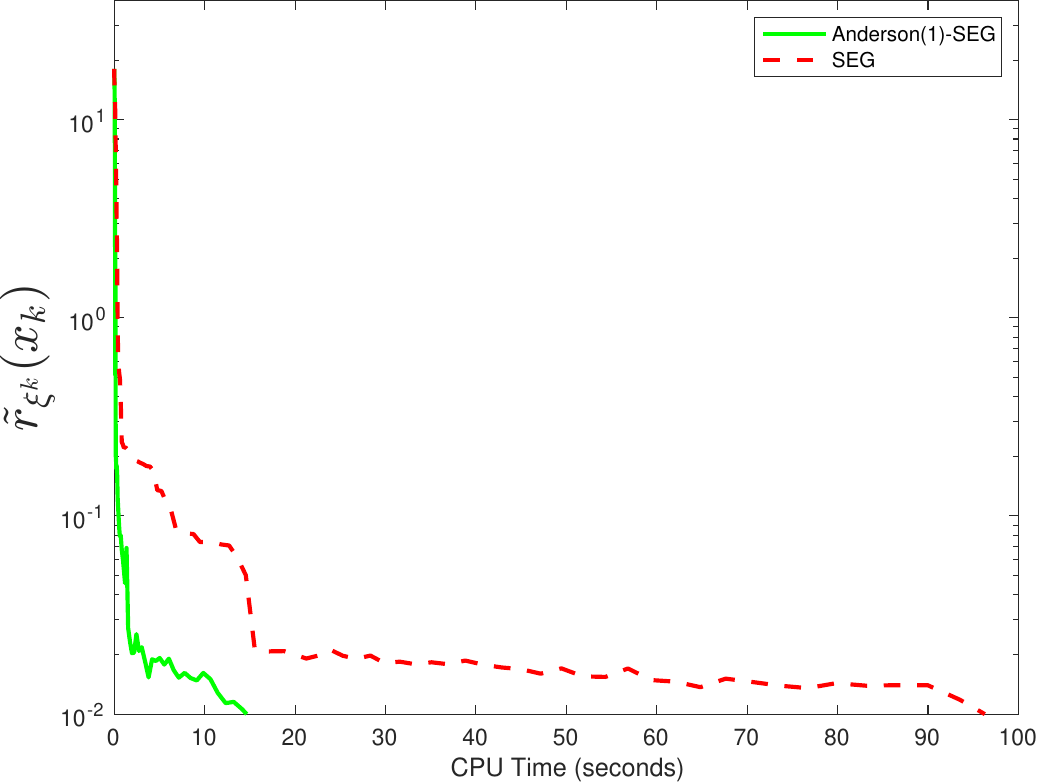}}
	
	\vspace{1em}
	
	\subfigure[$n=30$]{\includegraphics[width=0.48\textwidth]{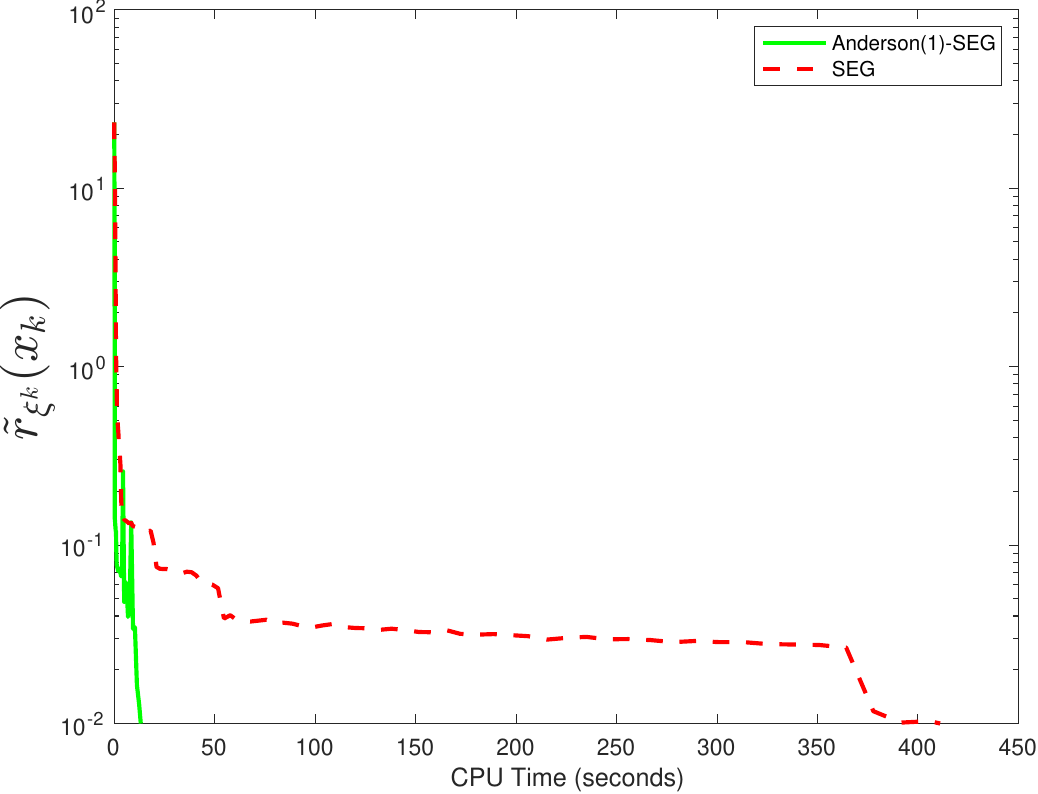}}
	\hfill
	\subfigure[$n=40$]{\includegraphics[width=0.48\textwidth]{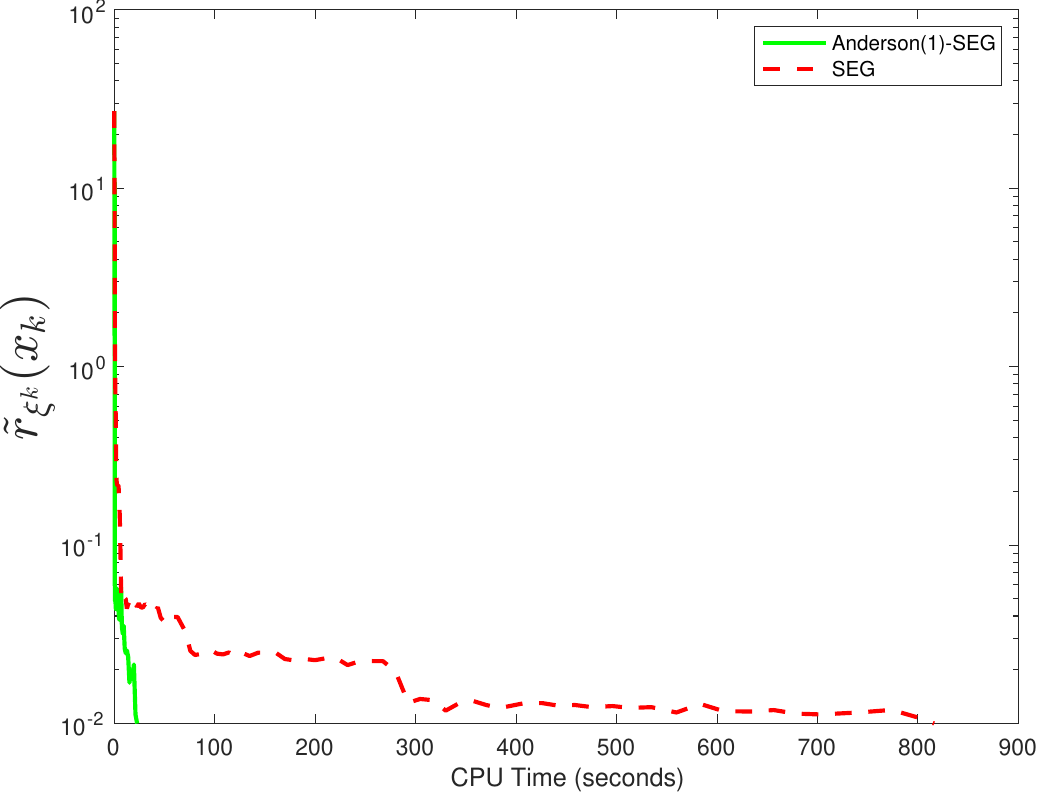}}
	
	\caption{Comparisons of the convergence behaviours of the Anderson(1)-SEG and the SEG algorithms for Example \ref{exam42}}
	\label{fig1020}
\end{figure}
We also perform the experiments where $S_k$
was set to $\mathcal{N}\lceil (k+\lambda)(\ln (k+\lambda))^{1+b} \rceil$ with $\gamma=0.9$, $\lambda=5$, $b=0.1$ and $\mathcal{N}=20$. 
Table \ref{t422} reports the average number of iterations and the average CPU time for both the algorithms across various dimensions, based on ten individual experimental runs. 
As shown in Table~\ref{t422}, across all tested dimensions, the Anderson(1)-SEG algorithm consistently requires fewer iterations and less CPU time than the SEG algorithm to meet the stopping criteria.
\begin{table}[H]
	\centering
	\label{table2}
	\caption{Comparisons of the two algorithms for Example \ref{exam42}}\label{t422}
	\begin{tabular}{c|cccc}
		\toprule
		Sec.(avr)\\ Iter.(avr) &  & Anderson(1)-SEG & SEG \\
		\midrule
		\multirow{2}{1cm}{$n=10$} &  & \textbf{0.2930} &  1.5853   \\
		&  & \textbf{ 24.9} (16.6)   &   62.4   \\
		\midrule
		\multirow{2}{1cm}{$n=20$} &  & \textbf{1.6086}  &    4.6915  \\
		& & \textbf{ 44.3} (25.9)  & 77 \\
		\midrule
		\multirow{2}{1cm}{$n=30$ } &  & \textbf{3.3112}  &  7.4201  \\
		& & \textbf{ 49.1} (29.6)   &  75.7 \\
		\midrule
		\multirow{2}{1cm}{$n=50$ } & & \textbf{14.4326} &  42.2535 \\
		& & \textbf{ 69.9} (39)   & 119.7\\
		\midrule
		\multirow{2}{1cm}{$n=100$ } &  &  \textbf{113.4942}  &  206.2057\\
		& & \textbf{96.5} (53.9)   & 124 \\
		\bottomrule
	\end{tabular}
\end{table}
Moreover, for Examples \ref{exam41} and \ref{exam42}, when $S_k$ is set to $\mathcal{N}\lceil (k+\lambda)(\ln (k+\lambda))^{1+b} \rceil$, where $\lambda=5$, $b=0.1$ and $\mathcal{N}=20$, Tables \ref{t411} and \ref{t422} show that the Anderson(1)-SEG algorithm is able to successfully solve both problems, even when $S_k$ is chosen to be smaller than the value specified in Assumption~\ref{a3}.
\subsection{Portfolio management}
\begin{example}
	%	Consider $n$ assets. Let $u\in\mathbb{R}^n$ denote the random returns of them.
	The Markowitz mean-variance model \cite{Pm} for portfolio selection can be formulated as
	\begin{equation}\label{objectvalue}
		\begin{split}
			& \min_{w\in\mathbb{R}^n}  \frac{1}{2}w^{\rm T}\Sigma w-\varrho^{\rm T}w
			\\  &~{\rm s.t.}~~ \mathbf{0}\leq w\leq a, ~\mathbf{e}^{\rm T}w=1,
		\end{split}
	\end{equation}
	where $w\in\mathbb{R}^n$ denotes the weights of the assets in the portfolio, $\varrho\in\mathbb{R}^n$, $\Sigma\in\mathbb{R}^{n\times n}$ are the mean vector and the covariance matrix of returns on assets, respectively.
	%$$\varrho=\mathbb{E}[u], \quad \Sigma=\mathbb{E}[(u-\varrho)(u-\varrho)^{\rm T}].$$
	$a\in\mathbb{R}_+^n$ is the upper bound enforced on $w$.
\end{example}
In the following experiments, we set $a=0.2\textbf{e}$. 
It is clear that problem \eqref{objectvalue} is equivalent to the linear complementarity problem denoted by ${\rm LCP}(\hat{q}, \hat{M})$, which is given by: 
\begin{equation}\label{SL}
	\hat{M}x+\hat{q}\geq \mathbf{0},~~~ x\geq \mathbf{0}, ~~~x^{\rm T}(\hat{M}x+\hat{q})=0,
\end{equation}
where
\[\hat{M}=
\begin{bmatrix} 
	\Sigma & -C^{\rm T} \\ 
	C & O 
\end{bmatrix},
C=\begin{bmatrix} 
	\textbf{e}^{\rm T}  \\ 
	-\textbf{e}^{\rm T} \\
	-I
\end{bmatrix},
\hat{q}=\begin{bmatrix} 
	-\varrho  \\ 
	-1  \\
	1  \\
	a 
\end{bmatrix},
x=\begin{bmatrix} 
	w \\ 
	y  
\end{bmatrix},
\] with $O$ denoting the zero matrix.
The dual variable $y$ is constrained to be in $\mathbb{R}_+^{n+2}.$
Moreover, the ${\rm LCP}(\hat{q}, \hat{M})$ in \eqref{SL} is equivalent to the ${\rm SVI}(X, H)$ with $H(x)=\hat{M}x+\hat{q}$ and $X=\mathbb{R}_+^n.$	
It can be verified that both Assumptions \ref{Minty} and \ref{a22} hold.
%Note that the covariance matrix $\Sigma$ is positive semi-definite, so $H$ is monotone, i.e., Assumption \ref{Minty} holds. According to the definitions of $\varrho$ and $\Sigma$, the random operator corresponding to $H$ is Lipschitz continuous, i.e., Assumption \ref{a22} is satisfied.

The stock data sets utilized in our experiments are drawn from two sources: the standard data sets available in the OR-library (referred to as Data 1-2) and those from the China A-share market (referred to as Data 3-6).
For the standard data sets, we utilize weekly stock prices from 1992 to 1997 for the DAX 100 Index (Germany) and the Nikkei Index (Japan). For Data sets 3 and 4, we employ daily closing prices from the China A-share market covering the period from 2010 to 2017. Additionally, for Data sets 5 and 6, we use daily closing prices for CSI300 index stocks from the China A-share market, spanning from 2015 to 2022. 

For the A-share and CSI300 index data, we remove stocks with more than 10$\%$  zero daily returns, resulting in 1,071 and 189 stocks as Data 3 and Data 5, respectively. 
We then rank these stocks by their average daily returns and select the top 150 and 100 stocks as Data 4 and Data 6, respectively.
A description of all the data sets can be found in Table \ref{datadescription}.
\vspace{20pt} % 增加10pt的垂直间距
\begin{table}[htbp]
	\centering
	\renewcommand{\arraystretch}{1.3}
	\setlength{\tabcolsep}{4pt}
		\caption{Description of the six real data sets}\label{datadescription}
	\begin{tabular}{c|c|c|c|c|c}
		\hline
		\multirow{1}{*}{\textbf{Data sets}} & \multirow{1}{*}{$n$} & \multirow{1}{*}{\textbf{Source}} & \multirow{1}{*}{\textbf{Index}} & 
		\multirow{1}{*}{$J$} & \multirow{1}{*}{\textbf{Description}}  \\
		\hline
		
		%Data 1 & 31 & Hong Kong & Hang Seng & 291 & weekly prices from 1992 to 1997  \\
		Data 1 & 85 & Germany & DAX 100 & 291 & weekly prices from 1992 to 1997 \\
		Data 2 & 225 & Japan & Nikkei 225 & 291 & weekly prices from 1992 to 1997  \\
		Data  3& 1071 &China  &A-share  & 1940 & daily prices from 2010 to 2017 \\
		Data 4 & 150 & China  & A-share & 1940 & daily prices from 2010 to 2017\\
		Data 5 & 189 & China & CSI300 & 1986 &daily prices from 2015 to 2022 \\
		Data 6 & 100 &  China& CSI300  & 1986 & daily prices from 2015 to 2022 \\
		\hline
	\end{tabular}
\end{table}

For each data set, let the stock closing prices be represented as $\{P_{j,i}:j\in[T], i\in [n]\}$. 
The $n$ represents the number of component assets, while the $T$ indicates the number of observations for those assets.
We compute the returns for each stock using the formula 
$r_{j,i}=\log \frac{P_{j+1,i}}{P_{j,i}}$ for $j\in [T-1], i\in[n]$.
This results in a return matrix $R\in\mathbb{R}^{(T-1)\times n}$ consisting of the elements $r_{j,i}$. Each data set is divided into two subsets in a 9:1 ratio, referred to as the training set and the testing set. The training set, known as the in-sample set, consists of the first 9/10 of the data and is used to calculate the optimal portfolio. The testing set, referred to as the out-of-sample set, includes the remaining data and is utilized to evaluate the performance of the resulting optimal portfolio.

Let $R\in\mathbb{R}^{(T-1)\times n}$
represent the return matrix of the assets over $[1, T-1]$. Based on the splitting ratio, we define $R_{in}\in\mathbb{R}^{T_{in}\times n}$ and $R_{out}\in\mathbb{R}^{T_{out}\times n}$, where 
$T_{in}=2\lceil 0.9(T-1)/2-1\rceil$ and $T_{out}=T-1-T_{in}$. Specifically, $R_{in}$ consists of the first $T_{in}$ rows of $R$, while $R_{out}$ comprises the remaining rows.

We use the Anderson(1)-SEG algorithm to solve the problem \eqref{SL} on the training set and find the optimal parameters $w$ for portfolio selection. We then compare its performance on the test set with that of the Naive portfolio (the naive evenly weighted portfolio, with all weights being $\frac{1}{n}$).
%{\color{blue}We utilize Anderson(1)-SEG algorithm to solve problem \eqref{SL} and find the optimal parameters $w$ for portfolio selection.} 
%{\color{blue}As in \cite{chen2018smoothing}, we consider the scenario in which $\Sigma$ is positive definite for risky assets.}
In practical applications, the out-of-sample portfolio performance is of greater interest, as it better reflects the portfolio's potential future returns. To enable comparison, we introduce two metrics to assess out-of-sample performance as follows:
\begin{enumerate}[\textbullet]
	\item the Sharpe ratio (SR) of the portfolio $w$, defined as $${\rm SR}(w) = \frac{\varrho_{out}^{\rm T}w}{\sqrt{w^{\rm T}\Sigma_{out} w}},$$
	where $\varrho_{out}$  is the mean of each column in $R_{out}$, and $\Sigma_{out}$ is the covariance matrix centered around $\varrho_{out}$;
	\item  
	the annualized return (AR) of the portfolio $w$, defined as
	$${\rm AR}(w)=\frac{{\rm CR}(w)}{T_w}\times 50\quad \text{or}\quad \frac{{\rm CR}(w)}{T_d}\times 250,$$ where
	${\rm CR}(w)$ represents the out-of-sample cumulative return for $w$, $T_w$ and $T_d$	refer to the number of weeks and days, respectively, in the out-of-sample dataset.
\end{enumerate}
In general, if a strategy has higher SR and AR, it is considered superior to other strategies.
%In general, the higher the values of the two criteria, the more effective the strategy is regarding out-of-sample CR and AR, respectively.

%Let $u\in\mathbb{R}^n$ denote the random returns of assets, then $\varrho=\mathbb{E}(u), \Sigma=\mathbb{E}[(u-\varrho)(u-\varrho)^{\rm T}]$.
When executing the Anderson(1)-SEG algorithm, the settings of some parameters in the algorithm are the same as in \eqref{para}. Moreover, we set $\gamma=0.1$ and
$S_k:=\min \left\{T_{in}/2, \mathcal{N}\lceil (k+\lambda)^2(\ln (k+\lambda))^{2+2b} \rceil\right\}$ with $\lambda=2+10^{-4}$, $b=10^{-4}$, and $\mathcal{N}=2$. 
%$$S_k= \min(T_{in}/2,  k + \lambda) $$
%with $\lambda=5$.  
We generate $\varrho\in\mathbb{R}^n$ and $\Sigma\in\mathbb{R}^{n\times n}$ in the following way:
$$\varrho_{\xi^{k}}=\text{mean}(R_{in}(T_{in}/2:T_{in}/2+S_k,:)); \quad \Sigma_{\xi^{k}}=\text{cov}(R_{in}(T_{in}/2:T_{in}/2+S_k,:));$$
$$\varrho_{\eta^{k}}=\text{mean}(R_{in}(1:S_k,:)); \quad \Sigma_{\eta^{k}}=\text{cov}(R_{in}(1:S_k, :)),$$ 
where $\{\xi^{k}\}$ and $\{\eta^{k}\}$ are random variables related to asset returns involved in each iteration.
Then, we have
\[\hat{M}_{\xi^{k}}=
\begin{bmatrix} 
	\Sigma_{\xi^{k}} & -C^{\rm T} \\ 
	C & O 
\end{bmatrix},\quad
\hat{q}_{\xi^{k}}=\begin{bmatrix} 
	-\varrho_{\xi^{k}}  \\ 
	-1  \\
	1  \\
	a 
\end{bmatrix}
\]
and 
\[\hat{M}_{\eta^{k}}=
\begin{bmatrix} 
	\Sigma_{\eta^{S_k}} & -C^{\rm T} \\ 
	C & O 
\end{bmatrix},\quad
\hat{q}_{\eta^{k}}=\begin{bmatrix} 
	-\varrho_{\eta^{k}}  \\ 
	-1  \\
	1  \\
	a 
\end{bmatrix}.
\]
%The stopping criterion for this example is set as
%$$\left\|r(\xi^k,x_k)\right\|=\left\|F_{1}(\xi^k,x_k)\right\|=\left\|P_{X}\left(x_k-  \left(\hat{M}(\xi^k)x_k+\hat{q}(\xi^k)\right)\right)-x_k\right\|<10^{-2}$$
%or when the number of iterations exceeds $1000.$  
The stopping criterion for this example is set as the number of iterations exceeding 2000. In the following figures and table, for simplicity, ‘Naive’ represents the average strategy, while ‘ASEG’ denotes the weights obtained by solving \eqref{SL} using the Anderson(1)-SEG algorithm. 
\begin{table}[H]
	\centering
	\renewcommand{\arraystretch}{1.3}
	\setlength{\tabcolsep}{4pt}
	\caption{Performance metrics for different data sets}\label{weightresults}
	\begin{tabular}{c|c|c|c|cc}
		\hline
		\multirow{2}{*}{\textbf{Data sets}} & \multicolumn{2}{c|}{\textbf{SR}} & \multicolumn{2}{c}{\textbf{AR}} \\
		\cline{2-5}
		& Naive & ASEG & Naive & ASEG \\
		\hline
		Data 1 & 0.2701 & \textbf{0.2744} &  9.5974e-04 & \textbf{0.0015} \\
		Data 2 & -0.2135 & \textbf{0.0716} &-0.0010 & \textbf{3.4974e-04} \\
		Data 3 &  -0.0492 & \textbf{0.0403} & -6.3285e-05 & \textbf{4.9256e-05}\\
		Data 4 & 0.0691 & \textbf{0.1029}& 8.6776e-05 & \textbf{1.4832e-04} \\
		Data 5 & 0.0506 & \textbf{0.0534} & 6.4001e-05 & \textbf{8.2941e-05} \\
		Data 6 & 0.0465 & \textbf{0.0536}&  6.5498e-05 & \textbf{8.3823e-05} \\
		\hline
	\end{tabular}
\end{table}
\begin{figure}[H]
	\centering
	\subfigure[]{
		\includegraphics[width=0.48\textwidth]{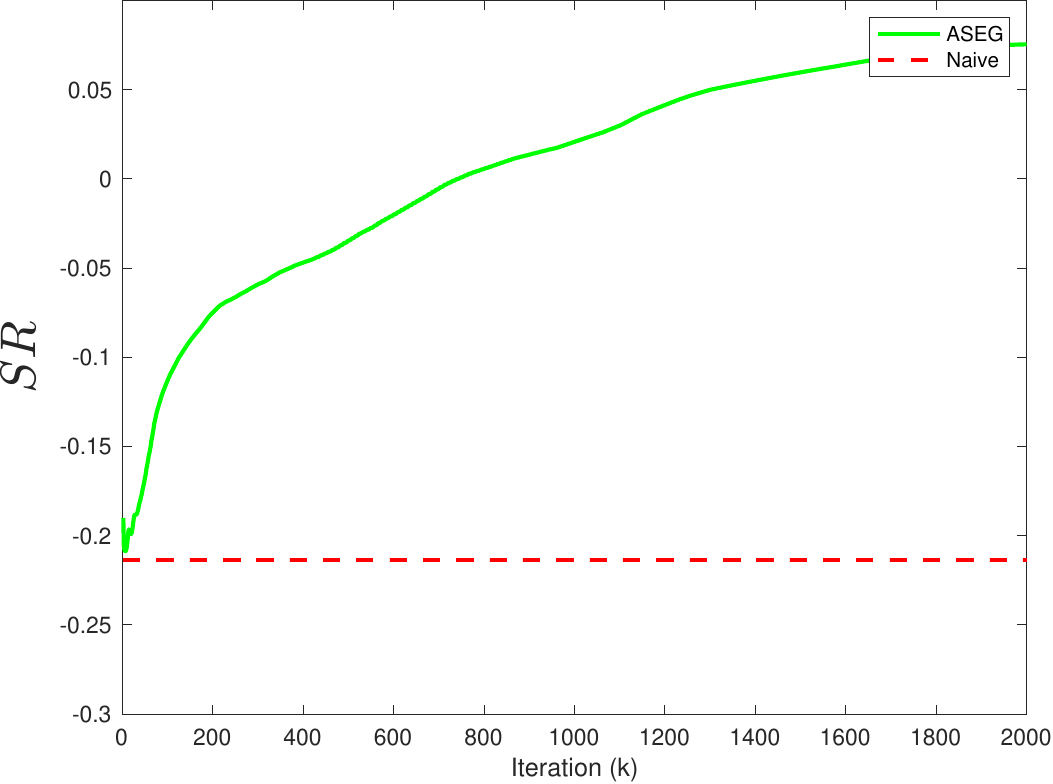}
		\label{figsr}
	}
	\hfill % 填充水平空间，使两图分开
	\subfigure[]{
		\includegraphics[width=0.48\textwidth]{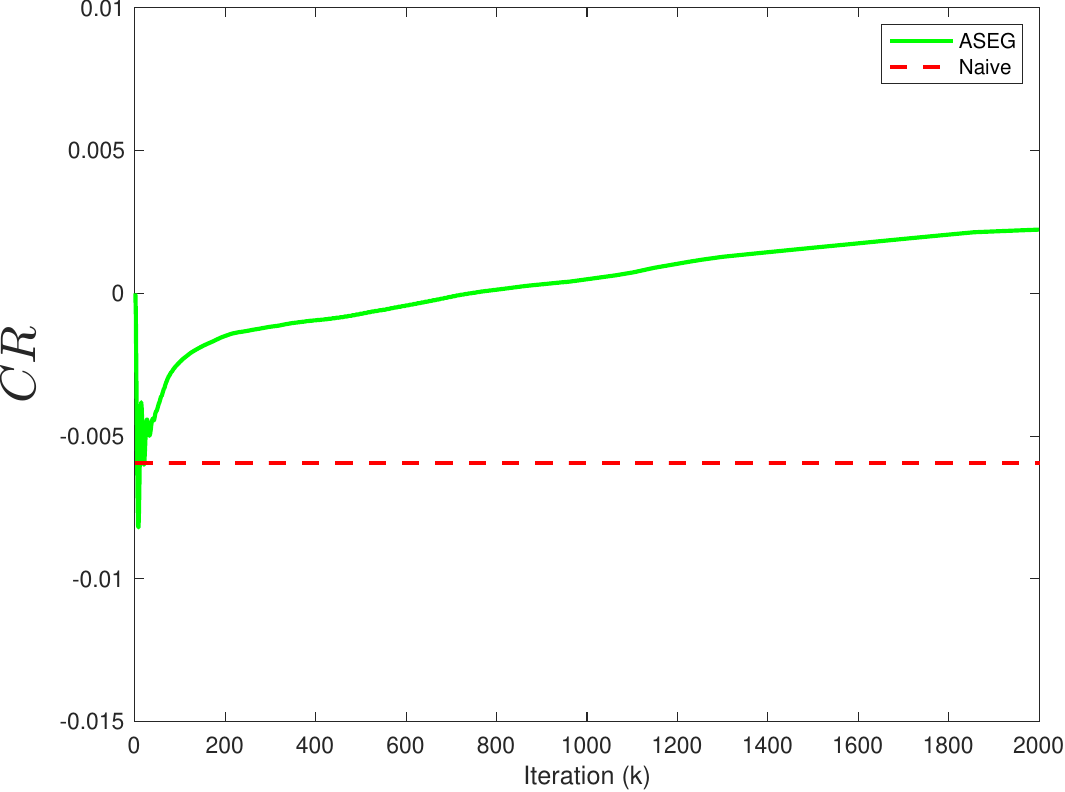}
		\label{fig3b}
	}
	\hfill % 填充水平空间，使两图分开
	\subfigure[]{
		\includegraphics[width=0.48\textwidth]{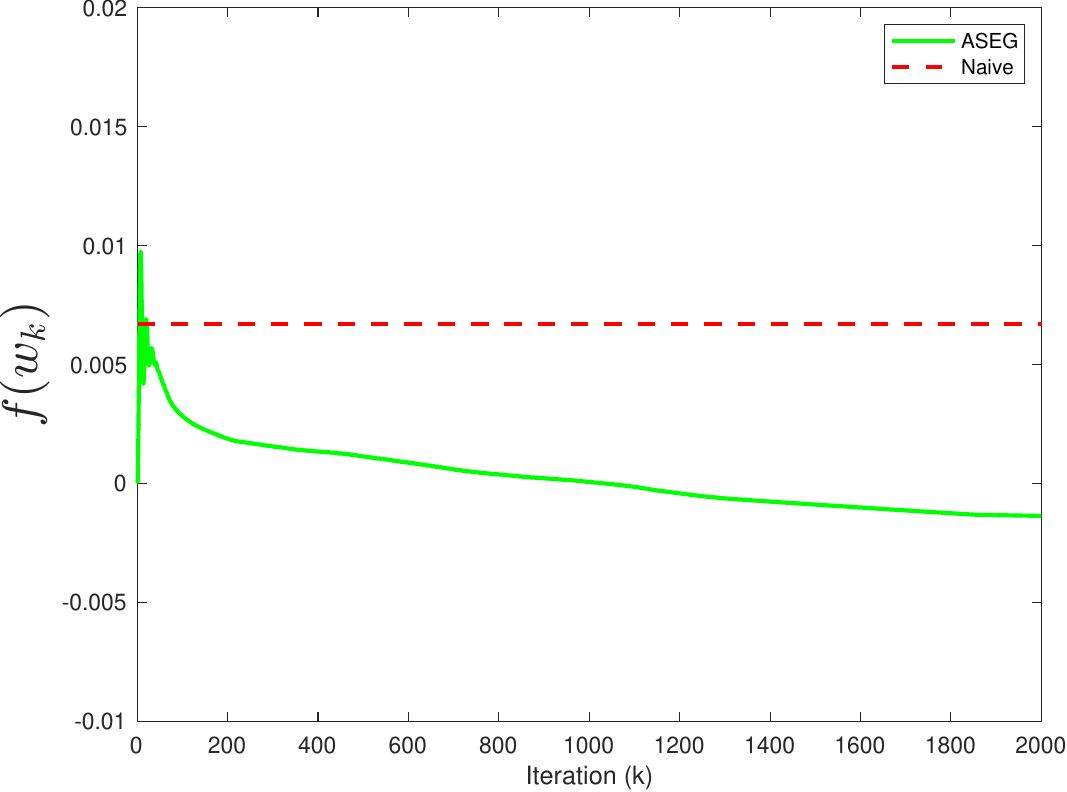}
		\label{fig3b}
	}
	\label{fig3}
	\caption{Convergence curves and comparison of SR and CR on the testing set of Data 2}\label{figcrsr}
\end{figure}
Table \ref{weightresults} presents the performance of the portfolio solved by the two methods on the test sets: results concerning SR and AR. 
According to Table \ref{weightresults}, it is evident that applying the Anderson(1)-SEG algorithm to solve \eqref{SL} yields a portfolio strategy with higher SR and AR compared to the average strategy across all data sets.
Moreover, Figure \ref{figcrsr} shows the curves of SR, CR, and the objective function value (abbreviated as $f$) in problem \eqref{objectvalue} on the testing set as the number of iterations increases for ASEG and Naive on Data 2.
\section{Conclusions}
This paper proposes an algorithm called Anderson(1)-SEG for solving ${\rm SVI}(X, H)$ where only the expected operator is assumed to be pseudomonotone. This algorithm employs the SA framework and combines the SEG and Anderson acceleration methods. To enhance robustness, the Anderson(1)-SEG algorithm integrates a line search strategy, eliminating the need for prior knowledge of the Lipschitz constant.
It is shown that, even though the sampled operators utilized in the algorithm lack pseudomonotonicity, the sequence generated by the Anderson(1)-SEG algorithm converges almost surely to a solution of problem ${\rm SVI}(X, H)$. Furthermore, a sublinear convergence rate of $\mathcal{O}(N^{-1})$ for the mean residual function is derived, and the algorithm's optimal oracle complexity is established. Numerical results indicate that the Anderson(1)-SEG algorithm achieves superior performance compared to the SEG method. Moreover, in the portfolio selection problem with real data, the solution obtained by the Anderson(1)-SEG algorithm outperforms the naive evenly weighted portfolio in terms of both SR and AR.
\vspace{5mm}
\\ \textbf{Data Availability}
The datasets generated during the current study are available from the corresponding author on reasonable request.

 \section*{Declarations}

 \textbf{Conflict of interest} The authors have not disclosed any competing interests.

%\begin{acknowledgements}
%If you'd like to thank anyone, place your comments here
%and remove the percent signs.
%\end{acknowledgements}

% Authors must disclose all relationships or interests that 
% could have direct or potential influence or impart bias on 
% the work: 
%
% \section*{Conflict of interest}
%
% The authors declare that they have no conflict of interest.

% BibTeX users please use one of
%\bibliographystyle{spbasic}      % basic style, author-year citations
%\bibliographystyle{spmpsci}      % mathematics and physical sciences
%\bibliographystyle{spphys}       % APS-like style for physics
%\bibliography{}   % name your BibTeX data base

% Non-BibTeX users please use
%\begin{thebibliography}{}
%%
%% and use \bibitem to create references. Consult the Instructions
%% for authors for reference list style.
%%
%\bibitem{RefJ}
%% Format for Journal Reference
%Author, Article title, Journal, Volume, page numbers (year)
%% Format for books
%\bibitem{RefB}
%Author, Book title, page numbers. Publisher, place (year)
%% etc
%\end{thebibliography}

\end{document}